\newcounter{propcounter}
\def\pseudop{{\star}}
\newif\ifnotesw\noteswtrue
\newcommand\innerp[1]{\langle #1 \rangle}
\newenvironment{poc}{\begin{proof}[Proof of claim]}{\end{proof}}
\newcommand{\V}[1]{\mathbold{#1}}
\newcommand{\COMMENT}[1]{}
\renewcommand{\COMMENT}{\footnote}
\newtheorem{theorem}{Theorem}[section]
\newtheorem{lemma}[theorem]{Lemma}
\newtheorem{observation}[theorem]{Observation}
\newtheorem{proposition}[theorem]{Proposition}
\newtheorem{corollary}[theorem]{Corollary}
\newtheorem{claim}[theorem]{Claim}
\newtheorem{conjx}{Conjecture}
\newtheorem{probx}[conjx]{Problem}
\theoremstyle{definition}
\newtheorem{remark}[theorem]{Remark}
\newtheorem{definition}[theorem]{Definition}
\def\al{{\alpha}}
\def\de{{\delta}}
\def\eps{{\varepsilon}}
\newcommand{\iminnerp}[1]{{\rm{Im}}\langle #1 \rangle}
\newcommand{\reinnerp}[1]{{\rm{Re}}\langle #1 \rangle}
\newcommand{\im}[1]{{\rm{Im}}(#1)}
\newcommand{\diam}[1]{{\rm{diam}}(#1)}
\newcommand{\cB}{\mathcal{B}}
\newcommand{\cG}{\mathcal{G}}
\newcommand{\floor}[1]{\lfloor{#1}\rfloor}
\newcommand{\sS}{\mathsf{S}}
\newcommand{\sN}{\mathbb{N}}
\newcommand{\sC}{\mathbb{C}}
\newcommand{\sR}{\mathbb{R}}
\newcommand{\sZ}{\mathbb{Z}}
\newcommand{\rt}{{\mathsf{RT}}}
\newcommand*{\bigabs}[1]{\bigl|#1\bigr|}
\title{Geometric constructions for Ramsey-Tur\'an theory}
\author{
	Hong Liu
	\thanks{Extremal Combinatorics and Probability Group (ECOPRO), Institute for Basic Science (IBS), South Korea, and Mathematics Institute and DIMAP, University of Warwick, UK. Email: \texttt{hongliu@ibs.re.kr}. Supported by the Institute for Basic Science (IBS-R029-C4) and  the UK Research and Innovation Future Leaders Fellowship MR/S016325/1.}
	\and
	Christian Reiher
	\thanks{Fachbereich Mathematik, Universit\"at Hamburg, Germany. Email: {\tt Christian.Reiher@uni-hamburg.de}.}
	\and
	Maryam Sharifzadeh 
	\thanks{Department of Mathematics and Mathematical Statistics, Ume\r{a} University, Sweden. Email: {\tt maryam.sharifzadeh@umu.se}.}
	\and 
	Katherine Staden
	\thanks{School of Mathematics and Statistics, Open University, UK, and Mathematical Institute, University of Oxford, UK. Email: {\tt katherine.staden@open.ac.uk}.}
}
\begin{document}
	
\maketitle

\begin{abstract}
	Combining two classical notions in extremal combinatorics, the study of Ramsey-Tur\'an theory seeks to determine, for integers $m\le n$ and $p \leq q$, the number $\rt_p(n,K_q,m)$, which is the maximum size of an $n$-vertex $K_q$-free graph in which every set of at least $m$ vertices contains a $K_p$. 
	
	Two major open problems in this area from the 80s ask: (1) whether the asymptotic extremal structure for the general case exhibits certain periodic behaviour, resembling that of the special case when $p=2$; (2) constructing analogues of Bollob\'as-Erd\H{o}s graphs with densities other than $1/2$.
	
	We refute the first conjecture by witnessing asymptotic extremal structures that are drastically different from the $p=2$ case, and address the second problem by constructing Bollob\'as-Erd\H{o}s-type graphs using high dimensional complex spheres with \emph{all rational} densities. Some matching upper bounds are also provided.
\end{abstract}




\section{Introduction}
Ramsey graphs for cliques are believed to be random-like; while on the other hand, the Tur\'an graphs from extremal graph theory are highly structured. Initiated in 1969 by S\'os, and later generalised by Erd\H{o}s, Hajnal, S\'os and Szemer\'edi~\cite{EHSSz}, \emph{Ramsey-Tur\'an theory} combines flavours of graph Ramsey and Tur\'an problems. The \emph{Ramsey-Tur\'an number} $\rt_p(n,K_q,m)$ is the maximum number of edges in an $n$-vertex $K_q$-free graph $G$ with $\alpha_p(G)\le m$, where $\al_p(G)=\max\{|U|:U\subseteq V(G)\text{ and }G[U]\text{ is }K_p\text{-free}\}$ is the \emph{$p$-independence number} of $G$. Notice that when $p=2$ and $m=n$, we recover the Tur\'an number of $K_q$; and as we consider large graphs, that is $n\rightarrow\infty$, by Ramsey's theorem, $m$ should be taken as a function of $n$.

 Aside from its close connection to Ramsey theory, e.g.~the seminal result of Ajtai, Koml\'os and Szemer\'edi~\cite{AKSz} on the independence number of triangle-free graphs with given size, results in Ramsey-Tur\'an theory have been applied, for instance, to construct dense infinite Sidon sets~\cite{AKSz-Sidon} in additive number theory, and to refute Heilbronn's conjecture~\cite{KPSz} in discrete geometry. For more details, we refer the reader to the comprehensive survey of Simonovits and S\'os~\cite{SimSos}.

In this paper, we consider the most classical setting $\rt_p(n,K_q,o(n))$, when the independence number is sublinear.

\subsection{Background}
The classical setting of sublinear independence number is defined as follows. Let $\varrho_p(q)$ be the \emph{Ramsey-Tur\'an density}: 
$$
\varrho_p(q) := \lim_{\eps\rightarrow 0}\lim_{n\rightarrow \infty}\frac{\rt_p(n,K_q,\eps n)}{\binom{n}{2}}.
$$
The existence of the limit was shown by Erd\H{o}s, Hajnal, Simonovits, S\'os and Szemer\'edi~\cite{EHSSSz}. Then $\rt_p(n,K_q,o(n)):=\varrho_p(q){n\choose 2}+o(n^2)$.

For $p=2$, the problem is now well-understood. First, in
1970, Erd\H{o}s and S\'os~\cite{ES} proved that $\varrho_2(2t+1) = \frac{t-1}{t}$, for all $t \geq 1$.
The case of even cliques turned out to be much harder. Applying a proto-regularity lemma, Szemer\'edi~\cite{SzK4} showed in 1973 that $\varrho_2(4) \leq \frac{1}{4}$. It was suspected by many that perhaps dense $K_4$-free graphs with sublinear independence number do not exist, i.e.~$\varrho_2(4)=0$. Then, surprisingly, a matching lower bound was given by Bollob\'as and Erd\H{o}s in 1976; their ingenious construction -- now called the \emph{Bollob\'as-Erd\H{o}s graph} -- was based on high dimensional spheres. 
Eventually in 1983, Erd\H{o}s, Hajnal, S\'os and Szemer\'edi~\cite{EHSSz} completed the $p=2$ case, proving that $\varrho_2(2t)=\frac{3t-5}{3t-2}$ for all $t \geq 2$. Furthermore, they showed that $\varrho_2(q)$ exhibits the following \emph{periodical} behaviour:
\begin{itemize}
	\item[$(\pseudop)$] Let $G$ be an asymptotic extremal graph for $\varrho_2(2t+r+2)$ with $r\in\{0,1\}$. Then the vertex set $V(G)$ can be partitioned into $V_0\cup V_1\cup \ldots \cup V_t$, such that
	\begin{itemize}
		\item each $G[V_i]$ has $o(1)$ edge-density;
		
		\item $G[V_0,V_1]$ has density $\frac{r+1}{2}-o(1)$; 
		
		\item every other $G[V_i,V_j]$ has density $1-o(1)$.
	\end{itemize}
\end{itemize}
In other words, the asymptotic extremal structure depends on the residue of $q$ modulo $p$ and evolves as follows: the density of the pair $G[V_0,V_1]$ increases as $r$, the residue of $q\!\mod p=2$, increases; and whenever $q$ increases by $p=2$, a new part is added and joined completely to previous parts.


The general problem $\varrho_p(q)$ for $p>2$ has been notoriously difficult and remained largely open. Indeed, apart from the trivial case $\varrho_p(p+1)=0$, the next simplest case $\varrho_3(5)$ remained open before this work. Quoting Erd\H{o}s, Hajnal, Simonovits, S\'os and Szemer\'edi~\cite{EHSSSz}, \emph{``One of the most intriguing problems is to determine the values and some asymptotically extremal graphs for $\rt_3(n,K_5,o(n))$ and $\rt_3(n,K_6,o(n))$. Unfortunately, this task seems to be too difficult.''} Despite this, in the same paper, they proposed the following bold conjecture, predicting that similarly to $\varrho_2(q)$ in~($\pseudop$), the general problem $\varrho_p(q)$ also has similar periodic asymptotic extremal structures. In particular, the value of $\varrho_p(q)$ depends on the residue of $q\!\mod p$ (see Figure~\ref{fig:periodic}).

\begin{conjx}[\cite{EHSSSz}, Conjecture~2.9]\label{conj}
	The asymptotic extremal graphs $G$ for $\varrho_p(q)$ have the following structure.
	Let $q=pt+r+2$ where $t\in\mathbb{N}$ and $0 \leq r < p$. Then there is a partition $V(G)= V_0\cup V_1 \cup \ldots \cup V_t$ such that
	\begin{itemize}
		\item $e(G[V_i])=o(n^2)$ for all $0\le i \le t$;
		
		\item $d_G(V_0,V_1) = \frac{r+1}{p}-o(1)$, and degrees in $G[V_0,V_1]$ differ by $o(n)$;
		
		\item $d_G(V_i,V_j) =1-o(1)$ for all pairs $\lbrace i,j\rbrace \neq \lbrace 0,1\rbrace$.
	\end{itemize}
	In particular, 
	\begin{equation}\label{eq:conjRTdensity}
		\varrho_p(q)=\varrho_p^*(q):=\frac{(t-1)(2p-r-1)+r+1}{t(2p-r-1)+r+1}.
	\end{equation}
\end{conjx}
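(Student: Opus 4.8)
The abstract announces that Conjecture~\ref{conj} is refuted, so the aim here is not to prove it but to \emph{disprove} it: for suitable $p<q$ one wants an $n$-vertex $K_q$-free graph $G$ with $\alpha_p(G)=o(n)$ whose edge density exceeds $\varrho_p^*(q)$ --- or, at the very least, an asymptotically extremal $G$ whose structure is not of the predicted ``single special pair $(V_0,V_1)$ of density $\tfrac{r+1}{p}$, all other pairs complete'' type. The engine will be a $p$-dimensional analogue of the Bollob\'as--Erd\H{o}s graph built from high-dimensional \emph{complex} spheres, glued to complete multipartite ``clique'' pieces in arrangements richer than the single-special-pair pattern of Conjecture~\ref{conj}.

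\smallskip
\noindent\textbf{The building block.} Replace the real sphere $S^{d-1}$ of the original Bollob\'as--Erd\H{o}s construction by the complex unit sphere $\Sigma=\{z\in\mathbb{C}^{d}:\|z\|_2=1\}$, and fix a primitive $p$-th root of unity $\omega=e^{2\pi i/p}$. The plan is to take vertex classes $A,B$ (possibly several of them) consisting of $\Theta(n)$ points of $\Sigma$, either a $\delta$-net or i.i.d.\ uniform, and to define adjacency through the Hermitian inner product: within a class, connect $z,w$ when $\langle z,w\rangle$ lies in a $\mathbb{Z}_p$-rotation-invariant region chosen to keep the class sparse (edge density $o(1)$), with small clique number, but still with $p$-independence number $o(n)$; between classes, connect $z,w$ according to a threshold on $\mathrm{Re}(\omega^{\ell}\langle z,w\rangle)$, the cut-off being set to realise any prescribed rational cross-density (using roots of unity of the appropriate order). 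The circle action $z\mapsto\omega z$ on $\Sigma$ is what creates room for densities other than $1/2$: orbits of size $p$ play the role of ``generalised antipodes''.

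\smallskip
\noindent\textbf{What must be verified.} For such a block there are three things to check. (i) \emph{Clique control}: the assembled graph is $K_q$-free; this is a finite case analysis over the ways the vertices of a putative $K_q$ can be distributed among the classes, using the metric incompatibilities on $\Sigma$ (only boundedly many points can be pairwise antipodal up to the $\mathbb{Z}_p$-rotation, and inner-product identities forbid various mixed configurations). (ii) \emph{Sublinear $p$-independence}: any set of $\Omega(n)$ points of $\Sigma$ covers a region of positive normalised measure, so by concentration of measure on $\Sigma$ it must contain a configuration that the adjacency rule realises as a $K_p$; this is the direct analogue of the packing argument at the heart of the original Bollob\'as--Erd\H{o}s proof. (iii) \emph{Density}: compute the limiting distribution of $\langle z,w\rangle$ for independent uniform $z,w\in\Sigma$ and read off the edge densities. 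With the block in hand, I would assemble the extremal construction for $\varrho_p(q)$ by combining one or more copies of it with complete multipartite pieces --- each new piece joined completely to everything before --- but arranged so that the block is exploited more than once (for instance with two ``special'' pairs, or with a block nested inside a clique piece, or using a multi-part block), then compute the resulting edge density and check that it strictly exceeds $\varrho_p^*(q)$; this simultaneously produces an asymptotically extremal structure unlike the one in Conjecture~\ref{conj}.

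\smallskip
\noindent\textbf{Main obstacle, and the upper bounds.} The principal difficulty is (ii): proving that the complex-sphere block has $p$-independence number $o(n)$. This is already delicate for $p=2$ and the real sphere, and here three complications coincide --- the target configuration is a $K_p$ rather than a single edge, the adjacency rule is twisted by $p$-th roots of unity, and it must survive the tuning to a non-$1/2$ cross-density; what seems to be needed is a quantitative isoperimetric or covering estimate on $\Sigma$ that is uniform in the rotation parameter. A secondary obstacle is the promised matching upper bounds: these should follow the template of Szemer\'edi's proof that $\varrho_2(4)\le\tfrac14$ and of Erd\H{o}s--Hajnal--S\'os--Szemer\'edi --- apply the regularity method, pass to a weighted reduced graph, and solve the resulting finite optimisation --- but both the optimisation and the accompanying stability analysis become genuinely harder for $p\ge 3$, which is presumably why only \emph{some} matching bounds are claimed.
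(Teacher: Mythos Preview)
Your proposal correctly identifies the complex-sphere block with the $\mathbb{Z}_p$-action, and this is indeed one of the paper's constructions (Theorem~\ref{GBE}). But you have its role backwards: that block gives a bipartite graph of cross-density $\ell/p$, which is \emph{exactly} the structure Conjecture~\ref{conj} predicts. Gluing it to complete multipartite pieces yields Corollary~\ref{cor}, namely $\varrho_p(q)\ge\varrho_p^*(q)$ for about half of all residues --- confirming the conjectured lower bound, not refuting the conjecture. Your suggestion to ``exploit the block more than once'' by inserting two special pairs among otherwise complete parts does not beat $\varrho_p^*(q)$: the conjectured pattern is already the optimum among such arrangements, so two sub-complete pairs only lose edges.

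The actual refutation (Theorem~\ref{thm-2tri-exact}) is structurally orthogonal to what you describe. The counterexample is a balanced $q$-partite graph in which \emph{every} pair of parts has the \emph{same} moderate density $2^{p-\ell}$ and there are no complete pairs at all; the density $\tfrac{1}{2^{\ell-p}}(1-\tfrac1q)$ then exceeds $\varrho^*_{2^\ell}(2^\ell+2^p+q-1)$ whenever $q>2$. The building block here is not the complex sphere but the $\ell$-dimensional Borsuk graph $B(\ell)$: a product of $\ell$ real spheres, passed through the Balogh--Lenz random sparsification to make the shadow hypergraph locally tree-like. Cross-edges between the $q$ copies of $B(\ell)$ are defined via a proper $(q-1)$-edge-colouring of $K_q$, which dictates, for each pair $(V_i,V_{i'})$, which coordinate blocks must be ``near'' in the real-sphere sense. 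The clique bound comes from counting \emph{lengthy coordinates} across parts and invoking the original Bollob\'as--Erd\H{o}s rhombus lemma (Theorem~\ref{thm-k4}) to cap their total at $\ell+p$. None of this is visible from the ``one special pair plus complete joins'' template you are working in; the key missing idea is that a uniformly sub-dense multipartite pattern can dominate the single-special-pair pattern once $p$ is large.
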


\begin{figure}[h]
	\centering
	\scalebox{0.88}{	
		\begin{tikzpicture}	
		\begin{scope}[xshift=-3.5cm,yshift=0.5cm]
		\draw[line width=2.4cm,color=Blue, opacity=0.2] (-0.25,0.5) -- (-0.25,1.5);
		\draw[rounded corners, fill=gray!20] (-1.5,1.2) rectangle (1,2);
		\draw[rounded corners, fill=gray!20] (-1.5,0) rectangle (1,0.8);
		
		\node[label=above:\tiny$V_0$] at (-0.25,1.2) {};
		\node[label=above:\tiny$V_1$] at (-0.25,0.0) {};
		
		\node[label=left:\small$\frac{1}{2}$] at (-1.3,1.6) {};
		\node[label=left:\small$\frac{1}{2}$] at (-1.3,0.4) {};
		
		\node[label=above:\tiny$\frac{1}{3}$] at (-0.25,0.6) {};
		
		\node[label=left:{$q=5$}] at (-1.,2.2) {};
		\end{scope}
		
		\begin{scope}[xshift=0.5cm,yshift=0.5cm]
		\draw[line width=2.4cm,color=Blue, opacity=0.5] (-0.25,0.5) -- (-0.25,1.5);
		\draw[rounded corners, fill=gray!20] (-1.5,1.2) rectangle (1,2);
		\draw[rounded corners, fill=gray!20] (-1.5,0) rectangle (1,0.8);
		
		\node[label=above:\tiny$V_0$] at (-0.25,1.2) {};
		\node[label=above:\tiny$V_1$] at (-0.25,0.0) {};
		
		\node[label=left:\small$\frac{1}{2}$] at (-1.3,1.6) {};
		\node[label=left:\small$\frac{1}{2}$] at (-1.3,0.4) {};
		
		\node[label=above:\tiny$\frac{2}{3}$] at (-0.25,0.6) {};
		
		\node[label=left:{$q=6$}] at (-1.,2.2) {};
		\end{scope}
		
		\begin{scope}[xshift=4.5cm,yshift=0.5cm]
		\draw[line width=2.4cm,color=Blue, opacity=1.0] (-0.25,0.5) -- (-0.25,1.5);
		\draw[rounded corners, fill=gray!20] (-1.5,1.2) rectangle (1,2);
		\draw[rounded corners, fill=gray!20] (-1.5,0) rectangle (1,0.8);
		
		\node[label=above:\tiny$V_0$] at (-0.25,1.2) {};
		\node[label=above:\tiny$V_1$] at (-0.25,0.0) {};
		
		\node[label=left:\small$\frac{1}{2}$] at (-1.3,1.6) {};
		\node[label=left:\small$\frac{1}{2}$] at (-1.3,0.4) {};
		
		\node[label=above:\textcolor{white}{$1$}] at (-0.25,0.6) {};
		
		\node[label=left:{$q=7$}] at (-1.,2.2) {};
		\end{scope}

		
		\begin{scope}[xshift=-3.5cm,yshift=-2.5cm]
		\draw[line width=2cm,color=Blue, opacity=1.0] (-1,1) -- (0.5,1);
		\draw[rounded corners, densely dotted,fill=white] (-1.55,-.05) rectangle (-0.85,2.05);
		\draw[line width=0.6cm,color=Blue, opacity=0.2] (-1.2,0.5) -- (-1.2,1.5);
		\draw[rounded corners, fill=gray!20] (-1.5,0) rectangle (-0.9,0.8);
		\draw[rounded corners, fill=gray!20] (-1.5,1.2) rectangle (-0.9,2);
		\draw[rounded corners, fill=gray!20] (-0.4,0) rectangle (1,2);
		
		\node[label=above:\tiny$V_0$] at (-1.2,1.2) {};
		\node[label=above:\tiny$V_1$] at (-1.2,0.0) {};
		\node[label=above:\tiny$V_2$] at (0.3,0.6) {};
		
		\node[label=left:\small$\frac{3}{12}$] at (-1.3,1.6) {};
		\node[label=left:\small$\frac{3}{12}$] at (-1.3,0.4) {};
		\node[label=right:\small$\frac{6}{12}$] at (0.8,1) {};
		
		\node[label=above:\tiny$\frac{1}{3}$] at (-1.2,0.6) {};
		\node[label=above:\textcolor{white}{$1$}] at (-0.6,0.6) {};
		
		\node[label=left:{$q=8$}] at (-1.,2.2) {};
		\end{scope}
		
		\begin{scope}[xshift=0.5cm,yshift=-2.5cm]
		\draw[line width=2cm,color=Blue, opacity=1.0] (-1,1) -- (0.5,1);
		\draw[rounded corners, densely dotted, fill=white] (-1.55,-0.05) rectangle (-0.75,2.05);
		\draw[line width=0.7cm,color=Blue, opacity=0.5] (-1.15,0.5) -- (-1.15,1.5);
		\draw[rounded corners, fill=gray!20] (-1.5,0) rectangle (-0.8,0.8);
		\draw[rounded corners, fill=gray!20] (-1.5,1.2) rectangle (-0.8,2);
		\draw[rounded corners, densely dotted] (-1.55,-0.05) rectangle (-0.75,2.05);
		\draw[rounded corners, fill=gray!20] (-0.3,0) rectangle (1,2);
		
		\node[label=above:\tiny$V_0$] at (-1.15,1.2) {};
		\node[label=above:\tiny$V_1$] at (-1.15,0.0) {};
		\node[label=above:\tiny$V_2$] at (0.35,0.6) {};
		
		\node[label=left:\small$\frac{3}{11}$] at (-1.3,1.6) {};
		\node[label=left:\small$\frac{3}{11}$] at (-1.3,0.4) {};
		\node[label=right:\small$\frac{5}{11}$] at (0.8,1) {};
		
		\node[label=above:\tiny$\frac{2}{3}$] at (-1.15,0.6) {};
		\node[label=above:\textcolor{white}{$1$}] at (-0.5,0.6) {};
		
		\node[label=left:{$q=9$}] at (-1.,2.2) {};
		\end{scope}
		
		\begin{scope}[xshift=4.5cm,yshift=-2.5cm]
		\draw[line width=2cm,color=Blue, opacity=1.0] (-1,1) -- (0.5,1);
		\draw[rounded corners, densely dotted, fill=white] (-1.55,-0.05) rectangle (-0.65,2.05);
		\draw[line width=0.8cm,color=Blue, opacity=1.0] (-1.1,0.5) -- (-1.1,1.5);
		\draw[rounded corners, fill=gray!20] (-1.5,0) rectangle (-0.7,0.8);
		\draw[rounded corners, fill=gray!20] (-1.5,1.2) rectangle (-0.7,2);
		\draw[rounded corners, fill=gray!20] (-0.1,0) rectangle (1,2);
		
		\node[label=above:\tiny$V_0$] at (-1.1,1.2) {};
		\node[label=above:\tiny$V_1$] at (-1.1,0.0) {};
		\node[label=above:\tiny$V_2$] at (0.4,0.6) {};
		
		\node[label=left:\small$\frac{3}{10}$] at (-1.3,1.6) {};
		\node[label=left:\small$\frac{3}{10}$] at (-1.3,0.4) {};
		\node[label=right:\small$\frac{4}{10}$] at (0.8,1) {};
		
		\node[label=above:\textcolor{white}{$1$}] at (-1.1,0.6) {};
		\node[label=above:\textcolor{white}{$1$}] at (-0.4,0.6) {};
		
		\node[label=left:{$q=10$}] at (-1.,2.2) {};
		\end{scope}
		

		\begin{scope}[xshift=-3.5cm,yshift=-5.5cm]
		\draw[line width=2cm,color=Blue, opacity=1.0] (-1.5,1) -- (0.5,1);
		\draw[rounded corners, densely dotted,fill=white] (-1.55,-0.05) rectangle (-1.15,2.05);
		\draw[line width=0.3cm,color=Blue, opacity=0.2] (-1.35,0.5) -- (-1.35,1.5);
		\draw[rounded corners, fill=gray!20] (-1.5,0) rectangle (-1.2,0.8);
		\draw[rounded corners, fill=gray!20] (-1.5,1.2) rectangle (-1.2,2);
		\draw[rounded corners, densely dotted,fill=white] (-0.75,-0.05) rectangle (1.05,2.05);
		\draw[line width=1.7cm,color=Blue, opacity=1.0] (0.15,0.5) -- (0.15,1.5);
		\draw[rounded corners, fill=gray!20] (-0.7,0) rectangle (1,0.8);
		\draw[rounded corners, fill=gray!20] (-0.7,1.2) rectangle (1,2);
		
		\node[label=above:\tiny$V_0$] at (-1.35,1.2) {};
		\node[label=above:\tiny$V_1$] at (-1.35,0.0) {};
		\node[label=above:\tiny$V_2$] at (0.15,1.2) {};
		\node[label=above:\tiny$V_3$] at (0.15,0.0) {};
		
		\node[label=left:\small$\frac{3}{18}$] at (-1.3,1.6) {};
		\node[label=left:\small$\frac{3}{18}$] at (-1.3,0.4) {};
		\node[label=right:\small$\frac{6}{18}$] at (0.8,1.6) {};
		\node[label=right:\small$\frac{6}{18}$] at (0.8,0.4) {};
		
		\node[label=above:\tiny$\frac{1}{3}$] at (-1.35,0.6) {};
		\node[label=above:\textcolor{white}{$1$}] at (-0.95,0.6) {};
		\node[label=above:\textcolor{white}{$1$}] at (0.15,0.6) {};
		
		\node[label=left:{$q=11$}] at (-1.,2.2) {};
		\end{scope}
		
		\begin{scope}[xshift=0.5cm,yshift=-5.5cm]
		\draw[line width=2cm,color=Blue, opacity=1.0] (-1.5,1) -- (0.5,1);
		\draw[rounded corners, densely dotted,fill=white] (-1.55,-0.05) rectangle (-1.05,2.05);
		\draw[line width=0.4cm,color=Blue, opacity=0.5] (-1.3,0.5) -- (-1.3,1.5);
		\draw[rounded corners, fill=gray!20] (-1.5,0) rectangle (-1.1,0.8);
		\draw[rounded corners, fill=gray!20] (-1.5,1.2) rectangle (-1.1,2);
		\draw[rounded corners, densely dotted,fill=white] (-0.65,-0.05) rectangle (1.05,2.05);
		\draw[line width=1.6cm,color=Blue, opacity=1.0] (0.2,0.5) -- (0.2,1.5);
		\draw[rounded corners, fill=gray!20] (-0.6,0) rectangle (1,0.8);
		\draw[rounded corners, fill=gray!20] (-0.6,1.2) rectangle (1,2);
		
		\node[label=above:\tiny$V_0$] at (-1.3,1.2) {};
		\node[label=above:\tiny$V_1$] at (-1.3,0.0) {};
		\node[label=above:\tiny$V_2$] at (0.2,1.2) {};
		\node[label=above:\tiny$V_3$] at (0.2,0.0) {};
		
		\node[label=left:\small$\frac{3}{16}$] at (-1.3,1.6) {};
		\node[label=left:\small$\frac{3}{16}$] at (-1.3,0.4) {};
		\node[label=right:\small$\frac{5}{16}$] at (0.8,1.6) {};
		\node[label=right:\small$\frac{5}{16}$] at (0.8,0.4) {};
		
		\node[label=above:\tiny$\frac{2}{3}$] at (-1.3,0.6) {};
		\node[label=above:\textcolor{white}{$1$}] at (-0.85,0.6) {};
		\node[label=above:\textcolor{white}{$1$}] at (0.2,0.6) {};
		
		\node[label=left:{$q=12$}] at (-1.,2.2) {};
		\end{scope}
		
		\begin{scope}[xshift=4.5cm,yshift=-5.5cm]
		\draw[line width=2cm,color=Blue, opacity=1.0] (-1.5,1) -- (0.5,1);
		\draw[rounded corners, densely dotted,fill=white] (-1.55,-0.05) rectangle (-0.95,2.05);
		\draw[line width=0.5cm,color=Blue, opacity=1.0] (-1.25,0.5) -- (-1.25,1.5);
		\draw[rounded corners, fill=gray!20] (-1.5,0) rectangle (-1.0,0.8);
		\draw[rounded corners, fill=gray!20] (-1.5,1.2) rectangle (-1.0,2);
		\draw[rounded corners, densely dotted,fill=white] (-0.55,-0.05) rectangle (1.05,2.05);
		\draw[line width=1.5cm,color=Blue, opacity=1.0] (0.25,0.5) -- (0.25,1.5);
		\draw[rounded corners, fill=gray!20] (-0.5,0) rectangle (1,0.8);
		\draw[rounded corners, fill=gray!20] (-0.5,1.2) rectangle (1,2);

		\node[label=above:\tiny$V_0$] at (-1.25,1.2) {};
		\node[label=above:\tiny$V_1$] at (-1.25,0.0) {};
		\node[label=above:\tiny$V_2$] at (0.25,1.2) {};
		\node[label=above:\tiny$V_3$] at (0.25,0.0) {};
		
		\node[label=left:\small$\frac{3}{14}$] at (-1.3,1.6) {};
		\node[label=left:\small$\frac{3}{14}$] at (-1.3,0.4) {};
		\node[label=right:\small$\frac{4}{14}$] at (0.8,1.6) {};
		\node[label=right:\small$\frac{4}{14}$] at (0.8,0.4) {};
		
		\node[label=above:\textcolor{white}{$1$}] at (-1.25,0.6) {};
		\node[label=above:\textcolor{white}{$1$}] at (-0.75,0.6) {};
		\node[label=above:\textcolor{white}{$1$}] at (0.25,0.6) {};
		
		\node[label=left:{$q=13$}] at (-1.,2.2) {};
		\end{scope}

		\draw[black,dotted] (-6,0) -- (6,0);
		\draw[black,dotted] (-6,-3) -- (6,-3);
		
		\draw[black,dotted] (-2,-6) -- (-2,3);
		\draw[black,dotted] (2,-6) -- (2,3);
		
		\newcommand{\verteq}{\rotatebox{90}{$\,=$}}
		\newcommand{\equalto}[2]{\underset{\scriptstyle\overset{\mkern4mu\verteq}{#2}}{#1}}

		\node[label=left:{$\equalto{r}{}$}] at (0.5,4.2) {};
		\node[label=left:{$0$}] at (-3.5,3.5) {};
		\node[label=left:{$1$}] at (0.5,3.5) {};
		\node[label=left:{$2$}] at (4.5,3.5) {};
		
		\node[label=left:{$t=$}] at (-6.7,-1.5) {};
		\node[label=left:{$1$}] at (-6.0,1.5) {};
		\node[label=left:{$2$}] at (-6.0,-1.5) {};
		\node[label=left:{$3$}] at (-6.0,-4.5) {};
		\end{tikzpicture}
	}
	\caption{An illustration for $p=3$.\label{fig:periodic}}
\end{figure}

In the final assertion, $\varrho_p^*(q)$ is obtained by
optimising the sizes of the vertex classes in the graph predicted by the conjecture. Towards this major conjecture, in~\cite{EHSSSz}, an upper bound of $\varrho_p(q) \leq \frac{q-1-p}{q-1}$ was proven, which is optimal when $q\equiv 1\!\mod p$, verifying~\eqref{eq:conjRTdensity} for this special case; and for sporadic cases when $q=p+\ell$, $\ell\le\min\{5,p\}$, it was shown that $\varrho_p(p+\ell) \leq \varrho_p^*(p+\ell) = \frac{\ell-1}{2p}$. Conjecture~\ref{conj} remains wide open for $q\not\equiv 1\!\mod p$.

As in many other extremal problems, when determining the Ramsey-Tur\'an density $\varrho_p(q)$, obtaining explicit constructions for the lower bound is the most challenging aspect. In this direction, even the simplest subproblem of determining whether $\varrho_3(5)>0$ was only confirmed in 2011 by a breakthrough of Balogh and Lenz~\cite{BL-1} using an elegant construction. The best general lower bound~\cite{BL-2} when $\ell\le p$ is $\varrho_p(p+\ell)\ge \frac{1}{2^{k+1}}$, where $\lceil \frac{p}{2^k}\rceil<\ell$, which provides the state of the art for $\varrho_3(5)$:
\begin{equation*}
	\frac{1}{8}\le \varrho_3(5)\le \frac{1}{6}.
\end{equation*}

It was stated in the work of Erd\H{o}s, Hajnal, S\'os and Szemer\'edi~\cite{EHSSz} that, for $\varrho_3(5)$, \emph{``an analogue of the Bollob\'as-Erd\H{o}s graph would be needed which we think will be extremely hard to find.''} This motivates another main open problem in this area:
\begin{probx}[\cite{BL-1,EHSSz}]\label{problem-BE}
	Construct an analogue of the Bollob\'as-Erd\H{o}s graph with density other than $\frac{1}{2}$.
\end{probx}

The only progress towards Problem~\ref{problem-BE} was the aforementioned results of Balogh and Lenz~\cite{BL-1,BL-2}, taking a certain product construction utilising Bollobas-Erd\H{o}s graphs to get variations with densities equal to powers of $1/2$.
Several other problems were also raised whose solution would make progress on Conjecture~\ref{conj}; we refer the reader to~\cite{BL-2,EHSSSz}.

\medskip

In this paper, we address all of these problems, revealing some unexpected phenomena of Ramsey-Tur\'an graphs.

\subsection{Complex Bollob\'as-Erd\H{o}s graphs with rational densities}
Our first main result answers Problem~\ref{problem-BE}.
Inspired by the Bollob\'as-Erd\H{o}s graph, we use isoperimetry and concentration of measure on the high dimensional \emph{complex} sphere to achieve all rational densities.

\begin{theorem}[Complex Bollob\'as-Erd\H{o}s graph]\label{GBE}
	Let $p,\ell$ be integers with $1 \leq \ell <p $. Then for all sufficiently large $n$,  there exists a graph $G$ with vertex partition $W \cup Z$, where $|W|=|Z|=n$, such that $\alpha_p(G) =o(n)$, $e(G[W]), e(G[Z])=o(n^2)$, and $e_G(W,Z) = (\ell/p-o(1))n^2$. If additionally $\ell\le p/2$, then $G$ is $K_{p+\ell+1}$-free, and consequently,
	$$\varrho_p(p+\ell+1) \geq \frac{\ell}{2p} = \varrho_p^*(p+\ell+1).$$
\end{theorem}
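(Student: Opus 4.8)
The plan is to build a \emph{complex} analogue of the Bollob\'as--Erd\H{o}s graph on the unit sphere $\mathbb{S}^{d}:=\{z\in\mathbb{C}^{d}:\|z\|=1\}$ of $\mathbb{C}^{d}$, equipped with its $U(d)$-invariant probability measure $\nu$ and the Hermitian form $\langle z,w\rangle=\sum_{k}z_{k}\overline{w_{k}}$. The new ingredient is that $\langle z,w\rangle$ is a genuine complex number: besides its modulus (which is all the real construction uses) it carries an \emph{argument}, and since $\mathbb{S}^{d}$ is invariant under the rotations $z\mapsto e^{i\theta}z$, the argument $\arg\langle z,w\rangle$ is distributed exactly uniformly on $\mathbb{R}/2\pi\mathbb{Z}$ for independent uniform $z,w$. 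So let $d=d(n)\to\infty$ slowly, let $W$ and $Z$ be two independent families of $n$ i.i.d.\ $\nu$-random points of $\mathbb{S}^{d}$, and define $G$ by three kinds of edges: inside $W$ and inside $Z$, a sparse ``simplex-type'' graph $H=H_{p}$ in which $x\sim x'$ forces $\re{\langle x,x'\rangle}\le -c(p)<0$ (so each pair is an edge only with probability $e^{-\Omega_{p}(d)}$), designed to have bounded clique number and, crucially, $\alpha_{p}(H)=o(n)$; and between $W$ and $Z$ the bipartite graph $x\sim y$ iff $\langle x,y\rangle\neq 0$ and $\arg\langle x,y\rangle$ lies in a fixed set $A$ that is a union of $\ell$ half-open arcs of length $2\pi/p$ each — and, when $\ell\le p/2$, we take these arcs \emph{consecutively}, so that $A\subseteq[0,\pi)$ and $A$ has circular measure $\ell/p$.

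First I would establish the ``metric'' properties. Since $\arg\langle x,y\rangle$ is uniform, $\nu(\{y:x\sim y\})=\ell/p$ for every fixed $x$, so $\ex[e_{G}(W,Z)]=\ell/p$; a bounded-differences (Azuma) inequality over the $2n$ independent points then gives $e_{G}(W,Z)=\ell/p-o(1)$ with high probability. A within-part edge is a large deviation for $\re{\langle x,x'\rangle}$ (whose typical size is $O(d^{-1/2})$), so $e_{G}(W),e_{G}(Z)\le e^{-\Omega_{p}(d)}n^{2}=o(n^{2})$. For $\alpha_{p}(G)=o(n)$ I would use concentration of measure (L\'evy's isoperimetric inequality) on $\mathbb{S}^{d}$: the within-part graph $H$ is arranged so that every subset of $\mathbb{S}^{d}$ of $\nu$-measure bounded away from $0$ already spans a $K_{p}$ of $H$; a union bound over a fine net of spherical caps, together with Chernoff bounds converting measure into proportion of the $n$ random points, then gives $\alpha_{p}(G[W])=o(n)$ and likewise $\alpha_{p}(G[Z])=o(n)$. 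Since any $K_{p}$-free induced subgraph of $G$ restricts to a $K_{p}$-free subgraph inside $W$ and inside $Z$, this yields $\alpha_{p}(G)\le\alpha_{p}(G[W])+\alpha_{p}(G[Z])=o(n)$.

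The main work is the $K_{p+\ell+1}$-freeness when $\ell\le p/2$. Suppose $K$ is a clique with $K\cap W=\{x_{1},\dots,x_{s}\}$ and $K\cap Z=\{y_{1},\dots,y_{r}\}$; then $\{x_{i}\}$ and $\{y_{j}\}$ are cliques in $H$, and $\arg\langle x_{i},y_{j}\rangle\in A\subseteq[0,\pi)$ for all $i,j$. The design of $H$ makes such a clique \emph{geometrically rigid}: the simplex inequalities $0\le\|\textstyle\sum_{i}x_{i}\|^{2}=s+\sum_{i\neq i'}\re{\langle x_{i},x_{i'}\rangle}\le s-s(s-1)c(p)$ pin down $\|\xi\|$ where $\xi:=\sum_{i}x_{i}$, and for $s$ equal to the clique number of $H$ one gets $\xi=0$. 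On the other hand, because $A$ lies inside the half-open semicircle $[0,\pi)$, any finite sum of \emph{nonzero} complex numbers each with argument in $A$ is again nonzero: its imaginary part is nonnegative, and vanishes only when every summand is a positive real, in which case the sum is positive. Hence $\langle\xi,\eta\rangle=\sum_{i,j}\langle x_{i},y_{j}\rangle\neq 0$ (each $\langle x_{i},y_{j}\rangle$ is a nonzero such number since $x_{i}\sim y_{j}$), contradicting $\xi=0$. This disposes of every clique one of whose sides is a \emph{maximum} within-part clique, and the hypothesis $\ell\le p/2$ is exactly what lets $A$ be fitted inside a semicircle. The remaining, most delicate case is a clique whose split $(s,r)$ is ``balanced'', with neither side a maximum within-part clique; here I would argue from the full Gram matrix of $x_{1},\dots,x_{s},y_{1},\dots,y_{r}$, using positive semidefiniteness together with the fact that all cross inner products lie in one common pointed convex cone (again since $A\subseteq[0,\pi)$), and it is here that the precise construction of $H$ is used.

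Finally, $G$ has $2n$ vertices, is $K_{p+\ell+1}$-free for $\ell\le p/2$, satisfies $\alpha_{p}(G)=o(n)=o(|V(G)|)$, and has $e(G)=e_{G}(W)+e_{G}(Z)+e_{G}(W,Z)=(\ell/p-o(1))n^{2}$, so $e(G)/\binom{2n}{2}\to(\ell/p)/2=\ell/(2p)$; hence $\varrho_{p}(p+\ell+1)\ge\ell/(2p)$. Substituting $q=p+\ell+1$ (so $t=1$, $r=\ell-1$) into \eqref{eq:conjRTdensity} gives $\varrho_{p}^{*}(p+\ell+1)=\frac{(t-1)(2p-r-1)+r+1}{t(2p-r-1)+r+1}=\frac{\ell}{2p}$, which matches. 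The main obstacle, I expect, is the clique-freeness — above all the balanced-split case — which forces one to design the sparse within-part graphs $H$ so that their cliques are rigid in a way that is quantitatively compatible with the complex cross-edge structure, while simultaneously keeping $\alpha_{p}(H)=o(n)$.
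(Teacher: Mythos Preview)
Your cross-edge rule --- adjacency when $\arg\langle x,y\rangle$ lies in an arc of length $2\pi\ell/p$, which for $\ell\le p/2$ fits inside a semicircle --- is exactly the paper's condition~\ref{it-between}(ii), so the density calculation and the role of $\ell\le p/2$ are right. The genuine gap is in the within-part graph and, through it, in the clique-freeness argument.

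With your simplex-type inner graph ($x\sim x'$ iff $\reinnerp{x,x'}\le -c(p)$) the bound $\bigl\|\sum_{i\le s}x_i\bigr\|^2\le s(1-(s-1)c(p))$ is small only when $s=p$, so your ``$\xi\approx 0$'' trick covers only cliques one of whose sides is a full $K_p$ in $H$. For any split $(s,r)$ with both $s,r<p$ --- e.g.\ $(p-1,\ell+2)$ whenever $\ell\le p-3$ --- you get no useful bound on $\|\xi\|$, and the ``Gram matrix plus cone'' suggestion is not a method but a hope. Even in the $s=p$ case the contradiction is only qualitative: $\langle\xi,\eta\rangle$ is a sum of nonzero numbers in a half-plane, hence nonzero, but since $\xi$ is only \emph{approximately} zero you need a quantitative lower bound on $|\langle\xi,\eta\rangle|$, and the individual $\langle x_i,y_j\rangle$ can be arbitrarily small.

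The paper sidesteps both issues by using a \emph{rotation}-based inner graph: $\V{w}\sim\V{w}'$ iff $|\V{w}-\rho^h\V{w}'|\le\sqrt{\mu}$ for some $h\in[p-1]$, where $\rho=e^{2\pi i/p}$. The vertices of any inner clique are then approximately $\rho^{h_i}\V{x}_0$ for distinct $h_i$ and a single $\V{x}_0$. Once the larger side $X$ has $|X|>p/2$ --- which always holds --- the roots $\{\rho^{h_i}\}$ cannot lie in an open half-plane, so $0$ is in their convex hull, and Carath\'eodory gives a \emph{three}-term convex combination $\V{x}=\lambda_1\V{x}_1+\lambda_2\V{x}_2+\lambda_3\V{x}_3$ with $|\V{x}|\le\sqrt{\mu}$, without ever needing $|X|=p$. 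On the smaller side, a pigeonhole argument (Lemma~\ref{lem-farpairs}) produces $\V{y},\V{y}'\in Y$ with $|\V{y}-\rho^m\V{y}'|\le\sqrt{\mu}$ for some $m\in[\ell,\lfloor p/2\rfloor]$; the contradiction comes from bounding $\iminnerp{\V{x},\V{y}-\rho^m\V{y}'}$ above by $\mu$ (Cauchy--Schwarz) and below by $2K\mu$ via a second cross-edge condition you omitted, $|\im{\rho^h\innerp{\V{w},\V{z}}}|\ge K\mu$ for all $h\in\sZ_p$, which costs $o(1)$ in density but is what makes the inequality quantitative. The rotation structure of the inner graph, not a simplex inequality, is the engine of the proof.
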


An immediate corollary of this result is that there is a construction as in Conjecture~\ref{conj} for just over half of all cases: we let $G[V_0 \cup V_1]$ be the graph in Theorem~\ref{GBE}.

\begin{corollary}\label{cor}
	Let $q = pt+\ell+1$. Then for all $0 \leq \ell \leq p/2$, 
	$$\varrho_p(q) \geq \varrho_p^*(q).$$
\end{corollary}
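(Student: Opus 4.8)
The plan is to build, for every large $N$, a graph with the partition structure of Conjecture~\ref{conj} in which $G[V_0\cup V_1]$ is a complex Bollob\'as--Erd\H{o}s graph supplied by Theorem~\ref{GBE}. Write $q=pt+\ell+1$ with $0\le\ell\le p/2$. When $\ell=0$ we have $q\equiv 1\pmod p$ and $\varrho_p^*(q)=\frac{q-1-p}{q-1}$ is exactly the value for which a matching lower-bound construction is already known, so I would assume $1\le\ell\le p/2$ from now on. Put $D:=t(2p-\ell)+\ell$ and pick integers $a,b$ with $2a+(t-1)b=N$ and $a=(1+o(1))\frac{pN}{D}$, $b=(1+o(1))\frac{(2p-\ell)N}{D}$. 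Let $V_0,V_1$ be the two sides of the graph $G_0$ from Theorem~\ref{GBE} (applied with the same $p,\ell$) on $a+a$ vertices, so $G_0$ is $K_{p+\ell+1}$-free, $\alpha_p(G_0)=o(a)$, $e(G_0[V_0]),e(G_0[V_1])=o(a^2)$ and $e_{G_0}(V_0,V_1)=\frac{\ell}{p}-o(1)$. On each of $t-1$ further classes $V_2,\dots,V_t$ of size $b$ I would place a $K_{p+1}$-free graph $H_i$ with $\alpha_p(H_i)=o(b)$ — such graphs exist (they are the building block of the known $q\equiv 1\pmod p$ construction), and since $\varrho_p(p+1)=0$ they automatically satisfy $e(H_i)=o(b^2)$. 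Finally let $G$ arise from $G_0\cup H_2\cup\dots\cup H_t$ by joining all pairs of distinct classes completely, except that $(V_0,V_1)$ retains density $\frac{\ell}{p}-o(1)$.

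Three verifications then remain. (i) $G$ is $K_q$-free: the classes $V_0\cup V_1,V_2,\dots,V_t$ are pairwise completely joined, so clique numbers add up and $\omega(G)=\omega(G_0)+\sum_{i=2}^t\omega(H_i)\le(p+\ell)+(t-1)p=pt+\ell=q-1$. (ii) $\alpha_p(G)=o(N)$: given $U\subseteq V(G)$ with $|U|\ge\varepsilon N$, if $|U\cap(V_0\cup V_1)|\ge\varepsilon_1 a$ for a suitably small $\varepsilon_1$ then $\alpha_p(G_0)=o(a)<\varepsilon_1 a$ yields a $K_p$ inside $U\cap(V_0\cup V_1)$; otherwise $\sum_{i\ge2}|U\cap V_i|\ge\frac{\varepsilon}{2} N$, so some $|U\cap V_i|$ is linear in $b$, and $\alpha_p(H_i)=o(b)$ yields a $K_p$ inside $U\cap V_i$. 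Either way $G[U]\supseteq K_p$. (iii) $e(G)\ge(\varrho_p^*(q)-o(1))\binom N2$: all but $o(N^2)$ edges of $G$ run between classes, so $e(G)=\frac{\ell}{p}a^2+2(t-1)ab+\binom{t-1}{2}b^2+o(N^2)$; dividing by $\binom N2$ and writing $x=2a/N$, $y=(t-1)b/N$ with $x+y=1$, the edge density tends to $\frac{\ell}{2p}x^2+2xy+\frac{t-2}{t-1}y^2$, and a direct (concave) computation shows that $x=2p/D$, $y=(t-1)(2p-\ell)/D$ — our choice of $a,b$ — maximises this expression, with maximum value exactly $\varrho_p^*(q)$ as recorded in~\eqref{eq:conjRTdensity}. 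Combining (i)--(iii) gives $\varrho_p(q)\ge\varrho_p^*(q)$; moreover $G$ itself realises the partition structure of Conjecture~\ref{conj}, which is the promised consequence.

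The only non-routine ingredient is the existence of the classes $V_i$ for $i\ge2$: they must be $K_{p+1}$-free graphs with sublinear $p$-independence number, and I expect this to be the main point to get right, because it is not optional — if some $V_i$ were merely sparse or independent, a linear-sized $U\subseteq V_i$ would fail to span a $K_p$ and $\alpha_p(G)$ would be linear. This is exactly the phenomenon already present in the $q\equiv1\pmod p$ case, so the construction can be borrowed from there. With that in hand, the $K_q$-freeness is immediate from additivity of clique numbers across a complete join, and matching $\varrho_p^*(q)$ reduces to maximising a quadratic form on the segment $x+y=1$, whose optimum is precisely the value $\varrho_p^*(q)$ obtained by optimising the class sizes in Conjecture~\ref{conj}.
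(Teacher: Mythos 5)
Your proposal is correct and takes exactly the approach the paper intends (the paper's own proof is just a one-line remark that one joins the graph of Theorem~\ref{GBE} completely to $t-1$ further classes of sublinear $p$-independence number); you fill in the explicit class sizes and verify the density optimisation, which you compute correctly, and you rightly identify that the $K_{p+1}$-free graphs with sublinear $p$-independence number needed for $V_2,\dots,V_t$ can be taken from the known $q\equiv 1\pmod p$ construction (indeed the inner graph $G[W]$ built in Section~\ref{sec-GBE} has exactly these two properties by Lemmas~\ref{obs-samerotation} and~\ref{lem-independenceGBEoneside}). The only cosmetic gap is that the term $\frac{t-2}{t-1}y^2$ is undefined when $t=1$, but that case is handled directly by Theorem~\ref{GBE} with no extra classes, so this does not affect the argument.
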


This in particular determines, after about 40 years, that $\varrho_3(5)=\frac{1}{6}$.

\subsection{Non-periodicity of Ramsey-Tur\'an graphs}
Our second main result, much to our own surprise, disproves Conjecture~\ref{conj} for infinitely many cases, all having densities strictly larger than the predicted $\varrho_p^*(q)$. 

For instance, Conjecture~\ref{conj} claims $\varrho_m(m+11)=\varrho_m^*(m+11)=\frac{5}{m}$
for every $m \geq 10$, with equality being achieved by an almost bipartite graph having density $\frac{10}{m}$ between the vertex classes. Our results show, however, that at least if $m=2^\ell$ is a power of $2$ with exponent $\ell \geq 9$, then $\varrho_m(m+11)=\frac{6}{m}$, where almost $4$-partite graphs with density $\frac{8}{m}$ between their vertex classes are extremal. The lower bound can be seen by plugging $p=3$ and $q=4$ into the statement that follows.

\begin{theorem}\label{thm-2tri-exact}
	Let $\ell,p,q\in\mathbb{N}$ with $q$ even, $\ell \geq p(q-1)$, $p^{\star}:=2^\ell$ and $q^{\star}:=2^\ell+2^p+q-1$. Then for all sufficiently large $n$, there exists an $n$-vertex $K_{q^{\star}}$-free graph $G$ with $\alpha_{p^{\star}}(G)=o(n)$ and an  equipartition $V(G)=V_1\cup\ldots\cup V_q$ such that
	\begin{itemize}
		\item $e(G[V_i])=o(n^2)$ for each $i\in[q]$;
		
		\item $d_G(V_i,V_j)=\frac{1}{2^{\ell-p}}-o(1)$ for all $ij\in{[q]\choose 2}$.
	\end{itemize}
	In particular, 
	\begin{equation}\label{eq:multi-lower}
		\varrho_{p^{\star}}(q^{\star})\ge\frac{1}{2^{\ell-p}}\left(1-\frac{1}{q}\right),
	\end{equation}
	where equality holds when $q(q-2)\le 2^p\le q^2$; and whenever $q > 2$, $$\varrho_{p^{\star}}(q^{\star}) > \varrho_{p^{\star}}^*(q^{\star}).$$
\end{theorem}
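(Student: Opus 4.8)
The plan is to derive everything from a single construction, the numerical parts of the statement then following by routine counting and arithmetic. Set $P:=p^{\star}=2^\ell$ and $L:=2^p$. Since $\ell\ge p(q-1)$ we have $1\le L<P$ whenever $\ell>p$ (and $L\le P/2$ once $q\ge 4$, so that the $K$-freeness clause of Theorem~\ref{GBE} applies); the case $q=2$ is exactly Theorem~\ref{GBE} with parameters $P$ and $L$, so assume $q\ge 4$. Theorem~\ref{GBE} then supplies a bipartite ``gadget'' $B$ on two classes of size $\approx n/q$ with $\alpha_{P}(B)=o(n)$, with $o(n^2)$ edges inside each class, with cross-density $L/P-o(1)=2^{-(\ell-p)}-o(1)$, and $K_{P+L+1}$-free. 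The goal is to realise $q$ such gadgets simultaneously — one between each pair of classes $V_1,\dots,V_q$ — inside one geometric model on a high-dimensional complex sphere, using $q$ nets $N_1,\dots,N_q$ and the same isoperimetry/concentration recipe underlying $B$, so that each $G[V_i]$ is (isomorphic to) one side of $B$, each $G[V_i,V_j]$ is a copy of $B$, and the global $p^{\star}$-independence number remains $o(n)$; where convenient one can quote Theorem~\ref{GBE} verbatim for the induced bipartite pairs after a suitable alignment of the nets, rather than re-running its isoperimetric argument.

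Granting such a model, three of the four asserted properties are immediate: $e(G[V_i])=o(n^2)$ and $d_G(V_i,V_j)=2^{-(\ell-p)}-o(1)$ are inherited from $B$, and $\alpha_{p^{\star}}(G)=o(n)$ holds because any $K_{p^{\star}}$-free $U\subseteq V(G)$ with $|U|\ge\varepsilon n$ meets some $V_i$ in at least $\varepsilon n/q$ vertices, and $G[V_i]$ (being one side of $B$) has $\alpha_{p^{\star}}(G[V_i])=o(n)$. The remaining clause, $K_{q^{\star}}$-freeness, is the heart of the matter, and I expect it to be the main obstacle. The plan is to show that every clique $K$ of $G$ is ``heavy'' (meets in $\ge 2$ vertices) on at most two classes, that two heavy classes $V_i,V_j$ span a clique of the gadget between them and hence contribute $|K\cap(V_i\cup V_j)|\le P+L$ by $K_{P+L+1}$-freeness, and that each of the remaining $\le q-2$ classes contributes at most one vertex; this gives $|K|\le P+L+q-2=q^{\star}-1$. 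The ``at most two heavy classes'' claim is the geometric crux: being heavy in $V_i$ forces $K$ to use a within-$V_i$ edge, which in the complex-sphere model corresponds to a near-antipodal (or, in the refined model, a ``far-apart'') pair of directions, and one must show that the cross-class adjacency rule — a sector/proximity condition on the Hermitian inner product — cannot be met simultaneously by too many such mutually constrained configurations. This is the $q$-fold, refined analogue of the clique-freeness argument behind the original Bollob\'as--Erd\H{o}s graph, and the bookkeeping required to make one geometric model realise all $\binom{q}{2}$ gadgets at once is where the work lies.

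Given the construction, \eqref{eq:multi-lower} is immediate: $e(G)=\sum_{ij\in\binom{[q]}{2}}d_G(V_i,V_j)|V_i||V_j|=\binom{q}{2}\bigl(2^{-(\ell-p)}-o(1)\bigr)(n/q)^2=\bigl(\tfrac{q-1}{2q}2^{-(\ell-p)}-o(1)\bigr)n^2$, so $\varrho_{p^{\star}}(q^{\star})\ge\frac{q-1}{q}\cdot 2^{-(\ell-p)}=\frac{1}{2^{\ell-p}}(1-1/q)$. When $q(q-2)\le 2^p\le q^2$ this lower bound is matched by the upper bound established elsewhere in the paper, which yields the asserted equality.

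Finally, for the strict inequality I would evaluate $\varrho_{p^{\star}}^{*}(q^{\star})$ directly from \eqref{eq:conjRTdensity}. Writing $q^{\star}=p^{\star}t+r+2$, the hypothesis $\ell\ge p(q-1)$ forces $0\le 2^p+q-3<2^\ell$, hence $t=1$ and $r=2^p+q-3$; with $t=1$ the numerator of \eqref{eq:conjRTdensity} is $r+1$ and the denominator is $2p^{\star}$, so $\varrho_{p^{\star}}^{*}(q^{\star})=\frac{r+1}{2p^{\star}}=\frac{2^p+q-2}{2^{\ell+1}}$. Comparing with the lower bound, the inequality $\frac{1}{2^{\ell-p}}(1-1/q)>\frac{2^p+q-2}{2^{\ell+1}}$ simplifies, after clearing denominators, to $2^{p}(q-2)>q(q-2)$, i.e.\ $2^p>q$, which holds in the stated regime (for $q>2$ with $2^p\ge q(q-2)$ when the equality hypothesis is in force, and more generally under the constraints linking $p,q,\ell$). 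Combining, $\varrho_{p^{\star}}(q^{\star})\ge\frac{1}{2^{\ell-p}}(1-1/q)>\varrho_{p^{\star}}^{*}(q^{\star})$, so \eqref{eq:conjRTdensity} fails for the pair $(p^{\star},q^{\star})$ and Conjecture~\ref{conj} is refuted in all these cases, with the almost $q$-partite, uniform-density extremal structure of $G$ differing qualitatively from the predicted one.
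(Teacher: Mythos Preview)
Your proposal has a genuine gap at the heart of the construction. You propose to reuse the complex Bollob\'as--Erd\H{o}s gadget of Theorem~\ref{GBE} between every pair $(V_i,V_j)$ on a single complex sphere, and to deduce $K_{q^{\star}}$-freeness from the claim that a clique can be ``heavy'' (meet in $\ge 2$ vertices) in at most two classes. You acknowledge this claim is ``the geometric crux'' but do not prove it, and in fact it is \emph{not} the mechanism by which the paper establishes clique-freeness --- nor is it clear that it holds in any model realising all $\binom{q}{2}$ gadgets with a shared inner structure on each $V_i$. The difficulty is that the bipartite clique bound from Theorem~\ref{GBE} controls only one pair at a time; nothing in your sketch prevents three or more classes from each contributing a within-class edge while remaining pairwise cross-adjacent.

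The paper's construction is genuinely different. Vertices live in a product $\prod_{h\in[\ell]}S_h$ of $\ell$ \emph{real} spheres; each $G[V_i]$ is a high-dimensional Borsuk graph $B(\ell)$ (the shadow of a carefully sparsified $2^\ell$-uniform hypergraph), not one side of the complex BE graph. The crucial idea you are missing is an allocation of coordinates to pairs: a proper $(q-1)$-edge-colouring $\chi$ of $K_q$ assigns each pair $V_iV_j$ a block of $p$ coordinates, and $\V u\in V_i$, $\V v\in V_j$ are joined when the relevant projections are close (in the hemisphere sense). The $K_{q^{\star}}$-freeness proof then goes via a counting argument rather than ``at most two heavy classes'': one shows that $|A\cap V_i|\le 2^{|L_i|}$, where $L_i\subseteq[\ell]$ is the set of coordinates on which $A\cap V_i$ contains a near-antipodal pair, and that $\sum_i |L_i|\le \ell+p$ (this last step is where the Rhombus lemma, Theorem~\ref{thm-k4}, enters, combined with the coordinate allocation). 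Maximising $\sum_i 2^{x_i}$ subject to $\sum_i x_i\le \ell+p$ and $x_i\le\ell$ by convexity yields $2^\ell+2^p+(q-2)$. Note that in this argument many classes may well be heavy; the bound comes from the exponent sum, not from limiting the number of heavy classes.

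Your arithmetic for \eqref{eq:multi-lower} and the comparison with $\varrho^{*}_{p^{\star}}(q^{\star})$ is essentially correct once the construction is in hand (the matching upper bound for equality is Theorem~\ref{thm-upper-easy}), though the final reduction to $2^p>q$ should be justified more carefully rather than asserted to hold ``in the stated regime''.
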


It is worth noting that Theorem~\ref{thm-2tri-exact} in fact refutes Conjecture~\ref{conj} in a strong sense. It reveals that the asymptotic extremal structure for $\varrho_p(q)$ is much more intricate. Indeed, the graph predicted in Conjecture~\ref{conj} remains almost bipartite when $q\le 2p+1$, in this range the cross density increases by $1/p$ when $q$ increases by one; and after this point, for each increment of $q$ by $p$, an additional part is added and joined completely to previous parts. Theorem~\ref{thm-2tri-exact} shows that already when $q\le 2p+1$, the asymptotic extremal structure could be almost $t$-partite for infinitely many choices of $t$.

\subsection{Some matching upper bounds}
Our remaining results concern upper bounds. Combined with our constructions in Theorem~\ref{GBE}, they show that $\varrho_p^*(pt+2)$ in Conjecture~\ref{conj} is the correct value for the Ramsey-Tur\'an densities $\varrho_p(pt+2)$ for $p=3,4$. Our proof translates proving these upper bounds to an extremal problem for certain weighted graphs, which is interesting in its own right. We believe our method for this weighted graph problem may be useful in systematically proving further upper bounds. So far, many of the existing upper bound proofs have followed a similar approach but in a rather ad hoc way.

\begin{theorem}\label{thm-upper-main}
	Let $t\in\mathbb{N}$. Then
	$$\varrho_3(3t+2)=\frac{5t-4}{5t+1} \quad \mbox{ and }\quad \varrho_4(4t+2)=\frac{7t-6}{7t+1}.$$
\end{theorem}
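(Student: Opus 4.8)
The plan is to prove Theorem~\ref{thm-upper-main} by establishing the two stated upper bounds, since the matching lower bounds $\varrho_3(3t+2)\ge\frac{5t-4}{5t+1}$ and $\varrho_4(4t+2)\ge\frac{7t-6}{7t+1}$ are exactly $\varrho_p^*(pt+2)$ (the case $\ell=1$, $q=pt+2$) and follow from Corollary~\ref{cor} via Theorem~\ref{GBE}. So the content is: $\varrho_p(pt+2)\le\varrho_p^*(pt+2)$ for $p\in\{3,4\}$. The overall strategy is the standard Ramsey-Tur\'an machinery, \emph{reduced to a weighted extremal problem}: take a $K_q$-free graph $G$ on $n$ vertices with $\alpha_p(G)=o(n)$ and near-extremal density, apply the (strong/multicoloured) regularity lemma to obtain a reduced graph $R$ whose vertices carry the cluster sizes as weights and whose edges are labelled by the densities of the regular pairs; discard sparse and irregular pairs. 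The key structural input is that, because $\alpha_p(G)=o(n)$, any set of $p$ clusters spanning only low-density pairs would contain a large $K_p$-free set (one picks one vertex from each cluster avoiding the few non-typical neighbours), a contradiction; more generally a $K_p$ inside $G$ forces, through the clusters, constraints linking which configurations of dense/sparse pairs can occur without creating $K_q$. This turns the problem into: maximise $\sum_{ij\in E(R)} w_i w_j d_{ij}$ over weightings $w$ with $\sum w_i=1$ and edge-labels $d_{ij}\in[0,1]$, subject to the combinatorial constraints that (a) $R$ has no ``$K_q$-template'' and (b) no $p$ vertices induce only edges of small density. The claimed value $\frac{5t-4}{5t+1}=\varrho_3^*(3t+2)$ (resp.\ $\frac{7t-6}{7t+1}=\varrho_4^*(4t+2)$) should be the exact optimum of this finite optimisation.

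Concretely, the steps I would carry out are: (1) Formalise the weighted reduced-graph problem, proving the reduction lemma that $\varrho_p(q)$ equals the supremum of the weighted density over all ``admissible'' edge-weighted graphs $R$ — here admissibility encodes $K_q$-freeness and the $p$-independence constraint, the latter saying essentially that the ``link'' of any vertex configuration cannot be too sparse. This is where I would isolate the clean combinatorial core the introduction alludes to. (2) Analyse the structure of admissible $R$ for $p=3$: classify edges as ``full'' (density $\to 1$), ``half'' (density bounded away from $0$ and $1$), or ``empty''; show that the full-edge graph must be $K_t$-like in a suitable blow-up sense (roughly $(t+1)$-partite after accounting for the special low-density pair), and that at most one pair of parts can be ``half'' and the rest are full, matching the $(\pseudop)$/Conjecture~\ref{conj} picture with $r=1$. (3) Given this structure, reduce to the scalar optimisation over the part sizes $x_0,\dots,x_t$ (with $\sum x_i=1$) of $\tfrac12(\text{half pair contribution})+\sum_{\text{full pairs}}x_ix_j$, i.e.\ maximise $\big(\sum_{i}x_i\big)^2$-type expression minus the ``lost'' contributions on the sparse within-part and half pairs; by Lagrange multipliers / convexity the optimum has all the ``full'' parts equal and the two special parts equal, yielding exactly $\frac{5t-4}{5t+1}$. (4) Repeat for $p=4$ with the analogous, slightly larger case analysis giving $\frac{7t-6}{7t+1}$. (5) Assemble: upper bound from (1)--(4), lower bound from Corollary~\ref{cor}, hence equality.

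The main obstacle I anticipate is step (2), the structural classification of admissible reduced graphs for $p=3$ and $p=4$ — in particular ruling out the ``exotic'' near-extremal configurations that Theorem~\ref{thm-2tri-exact} shows \emph{do} exist for larger $p$. The difficulty is precisely that the naive induction ``a densest $K_q$-free near-extremal graph looks like Conjecture~\ref{conj}'' is false in general, so one cannot appeal to a clean inductive hypothesis; instead one must exploit that $p=3,4$ is small enough that the $p$-independence constraint is rigid — e.g.\ for $p=3$, a set of three clusters with all pairwise densities below a threshold is fatal, which sharply limits how the ``half'' and ``empty'' edges can be distributed, and one needs to check that every alternative distribution either creates a $K_q$ or has strictly smaller weighted density than $\varrho_p^*$. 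Handling the boundary behaviour of the regular pairs (densities that tend to $0$ or $1$ but are not exactly there, and the interplay with $\alpha_p(G)=o(n)$ requiring a careful choice of which vertex to pick from each cluster so as to build a large $K_p$-free set) is the routine-but-delicate part that underlies the reduction in step (1); I would handle it by a standard ``choose a typical vertex avoiding $o(1)$-fraction of bad events in each of boundedly many clusters'' argument. The scalar optimisations in (3)--(4) I expect to be genuinely routine once the structure is pinned down.
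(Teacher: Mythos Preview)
Your high-level plan (lower bound from Corollary~\ref{cor}; upper bound via regularity reduction to a weighted extremal problem on the reduced graph) matches the paper's architecture. However, the route you propose for solving the weighted problem is genuinely different from the paper's, and your acknowledged ``main obstacle'' in step~(2) is precisely the step the paper circumvents rather than confronts.

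The paper does \emph{not} attempt a structural classification of admissible reduced graphs (full/half/empty edges, then show the full graph is $(t{+}1)$-partite, etc.). Instead it discretises densities to integer edge-weights in $\{0,1,\ldots,p\}$, and introduces the purely combinatorial family $\cG_p(q)$ of positive $p$-weighted graphs admitting a \emph{dominating extension} of size $\ge q$; an embedding lemma (Lemma~\ref{embedding}) shows that any such subgraph in the reduced graph forces $K_q$ in the original. The weighted problem then becomes: if $\delta(G)>p\cdot\varrho_p^*(pt+2)\cdot n$ in a $p$-weighted graph, find $J$ with $G[J]\in\cG_p(pt+2)$. This is solved (Lemma~\ref{smallp}) by first extracting a \emph{herculean} set $K$ maximising $p|K|-\tilde w(K)$ subject to a local defect bound (Proposition~\ref{prop-hero}), then greedily appending vertices of weights $2$ and $1$; a carefully designed potential $H(u)=\sum_{v\in J}\eta(v)\tilde w(u,v)$ with $\eta$-values $2p{-}1{-}\gamma(x)$, $4$, $2$ on the three tiers is averaged to locate a vertex $u_*$ that either extends $J$ or yields a short numerical contradiction. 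The small case analysis at the end is where $p\in\{3,4\}$ enters, not in any structure theorem.

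Your plan, by contrast, aims to prove that the extremal reduced graph must \emph{look like} the Conjecture~\ref{conj} picture and then optimise part sizes. That is a harder and more fragile target: with $p=3,4$ there are three or four intermediate edge-weight levels rather than a clean full/half/empty trichotomy, and nothing in the $\alpha_p$-constraint directly forces ``at most one non-full pair'' --- you would need to rule out, e.g., balanced near-$t$-partite configurations with several pairs of intermediate density, each individually too sparse to embed $K_q$ but collectively dense. The paper's potential-function argument sidesteps all of this by never asserting what the extremal graph looks like; it only shows that above the threshold some $\cG_p(pt+2)$-subgraph must appear. If you want to pursue your route, the missing idea is a replacement for the herculean-set/potential machinery that controls the interaction of several intermediate-density pairs simultaneously; without it, step~(2) as written is a hope rather than an argument.
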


Our last upper bound shows that the bound~\eqref{eq:multi-lower} in Theorem~\ref{thm-2tri-exact} is optimal for infinitely many cases.

\begin{theorem}\label{thm-upper-easy}
	Let $p,s,t\in\mathbb{N}$ with $t(t-2) \leq s \leq t^2$ and $s+t-1 \leq p$.
	Then 
	$$\varrho_p(p+s+t-1) \leq \frac{s}{p}\left(1-\frac{1}{t}\right).$$
\end{theorem}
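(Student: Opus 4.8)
The plan is to use the standard regularity-based route for Ramsey--Tur\'an upper bounds --- the same framework that underlies Theorem~\ref{thm-upper-main} --- namely: reduce, via Szemer\'edi's regularity lemma, to an extremal problem for \emph{weighted} graphs, and then exploit that in the parameter range $s+t-1\le p$, $t(t-2)\le s\le t^2$ this weighted problem has a transparent solution, the blow-up of $K_t$ with uniform edge-weight $s/p$.

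Concretely, I would fix a small $\varepsilon>0$, take a $K_{q}$-free graph $G$ on $n$ (large) vertices with $q=p+s+t-1$ and $\alpha_p(G)\le\varepsilon n$, and apply the regularity lemma with a parameter $\eta=\eta(\varepsilon)$ fine enough for the reduction but coarse enough that the number $k$ of clusters stays small relative to $1/\varepsilon$. Let $R$ be the weighted graph on $[k]$ with $w_{ij}=d(V_i,V_j)$ whenever $(V_i,V_j)$ is $\eta$-regular of density at least $\sqrt\eta$, and $w_{ij}=0$ otherwise. Since each $G[V_i]$ is $K_q$-free with $\alpha_p(G[V_i])=o(|V_i|)$ one gets $\sum_i e(G[V_i])\le\binom n2/k$, which is negligible relative to $n^2$, and the exceptional set together with the irregular and low-density pairs contribute at most $O\bigl((\eta+\sqrt\eta)n^2\bigr)$ edges; hence it suffices to prove $\sum_{ij}w_{ij}\le\bigl(\tfrac sp(1-\tfrac1t)+o(1)\bigr)\binom k2$ and let $\varepsilon\to0$.

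The crux is the embedding step translating the hypotheses on $G$ into constraints on $R$. The main leverage: because $\alpha_p(G)\le\varepsilon n$ with $\varepsilon$ small relative to $1/k$, every linear-sized subset of a cluster contains a $K_p$, so any cluster can supply an entire $K_p$ inside a prescribed linear ``candidate set''; combined with the slicing lemma one embeds cliques of $G$ along cliques of $R$ vertex by vertex, restricting the untouched clusters to neighbourhoods, which remains valid as long as the candidate sets stay linear --- affordable a bounded number of times since $q$ is a constant and $\eta$ can be chosen small. Carefully tracking how a candidate set shrinks (a chosen vertex in a cluster $C$ cuts a neighbouring candidate set in $C'$ by a factor about $d(C,C')$, via a union/codegree estimate) and demanding that no such procedure produces $K_q$, one arrives at a list of forbidden configurations for the weighted graph $R$: roughly speaking, $R$ cannot carry a clique together with enough weight to accommodate $s+t-1$ additional vertices on top of one $K_p$ pulled from a distinguished cluster. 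Pinning down this forbidden configuration precisely and verifying that its presence really does force $K_q\subseteq G$ --- in particular that the $K_p$'s can be extracted while keeping all candidate sets linear --- is the main technical obstacle.

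What then remains is a self-contained extremal problem for weighted graphs: maximise $\sum_{ij}w_{ij}$ over weighted graphs on $[k]$ avoiding the forbidden configuration. This is exactly where the arithmetic hypotheses enter. The condition $s+t-1\le p$ keeps $q\le2p$, so the optimal structure is ``almost $t$-partite'' rather than one built by adjoining complete parts; and $t(t-2)\le s\le t^2$ is precisely the condition ensuring the extremal weighted graph is the balanced blow-up of $K_t$ with every edge of weight $\tfrac sp$, which up to $o(1)$ is the reduced graph of the construction in Theorem~\ref{thm-2tri-exact}. This yields $\sum_{ij}w_{ij}\le\tfrac sp(1-\tfrac1t)\binom k2+o(k^2)$, hence the claimed bound. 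The optimisation over weighted graphs here is the easy analogue of the weighted extremal problem solved for Theorem~\ref{thm-upper-main}; the heavy lifting is the embedding step feeding into it.
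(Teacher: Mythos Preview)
Your high-level framework --- regularity reduction, then an extremal problem for weighted graphs --- is exactly what the paper does (Lemma~\ref{reduce} and Lemma~\ref{uppertilde}). But what you have written is a plan, not a proof: you explicitly defer the two places where all the content lives.

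First, the embedding step. You say that ``pinning down this forbidden configuration precisely and verifying that its presence really does force $K_q\subseteq G$\ldots is the main technical obstacle'' and then stop. The paper resolves this once and for all by passing to \emph{integer} edge-weights $w(v_i,v_j)=\lfloor p(d_{ij}-\eta)\rfloor+1\in\{0,1,\ldots,p\}$ and introducing the notion of a \emph{dominating extension} (Lemma~\ref{embedding}): an enumeration $v_1,\ldots,v_m$ together with vertex-weights $w(v_j)$ such that the multiset of backward edge-weights to $v_j$ dominates a specific profile. The point is that Claim~\ref{cl:int} turns ``edge-weight $\ge a$'' into ``among any $K_p$ in this cluster, some $a$ vertices have a linear common neighbourhood in the target cluster'', so the dominating condition is precisely what allows the greedy embedding to succeed. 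In the range $s+t-1\le p$ only the crude instance~\eqref{eq:mq} is needed: any positive $m$-vertex weighted graph with an edge of weight at least $q-p-m+2$ lies in $\cG_p(q)$. Without isolating something like this, your ``forbidden configuration'' remains a wish.

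Second, the optimisation. You assert that the extremal weighted graph is the balanced blow-up of $K_t$ with edge-weight $s/p$, but give no argument. The paper does not work with $\sum_{ij}w_{ij}$ directly; it passes to a \emph{dense} submatrix $A'$ (minimal support with maximal Lagrangian $g(A')$), uses Lemma~\ref{sym} to get positivity and equal ``weighted degrees'' $\sum_i a_{ij}u_i=g(A')>s(t-1)/t$, and then argues: if $m\ge q-p+1$ or some entry is $\ge q-p-m+2$ we are done by~\eqref{eq:mq}; otherwise every entry is $\le s+t-m$, whence the row with largest $u_i$ gives $s(t-1)/t<(s+t-m)(m-1)/m$, i.e.\ $(m-t)(m-\tfrac{s+t}{t})<0$, which is impossible for integer $m$ exactly when $t(t-2)\le s\le t^2$. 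This one-line inequality is the whole content of the bound, and it is what you are missing.
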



\subsection{Related work}

Aside from the determination of $\varrho_p(q)$, many other directions and extensions in Ramsey-Tur\'an theory have been studied. The 2001 survey of Simonovits and S\'os~\cite{SimSos} is an excellent resource for background on the area; here we confine ourselves to a brief discussion focusing on more recent developments.

This paper concerns the Ramsey-Tur\'an number $\rt_p(n,K_q,m)$ for $m=\eps n$ and $\eps \to 0$. 
For the case $p=2$ in particular, there has been a great deal of interest in other functions $m(n)$ of $n$.
Let us write
$$
{\rm ex}_q(n,m) := \rt_2(n,K_q,m)\quad\text{and}\quad {\rm ex}_q(\eps) := \lim_{n\to\infty}\frac{{\rm ex}_q(n,\eps n)}{\binom{n}{2}},
$$
and recall that the value of
$\varrho_2(q) = \lim_{\eps\to 0}{\rm ex}_q(\eps)$ is known~\cite{EHSSz,ES}.
Fox, Loh and Zhao~\cite{FLZh} showed that ${\rm ex}_4(\eps) = \varrho_2(4)+\Theta(\eps)$.
L\"uders and Reiher~\cite{LuedersReiher} extended this to obtain a formula for all $q$: they showed that ${\rm ex}_q(\eps) = \varrho_2(q)+\eps$ for odd $q$ and ${\rm ex}_q(\eps) = \varrho_2(q)+\eps-\eps^2$ for even $q$, whenever $\eps(q)$ is sufficiently small. For larger $\eps$, the situation is complicated even for the first non-trivial case $q=3$.
Mantel's theorem and an early result of Andr\'asfai~\cite{Andrasfai} determine ${\rm ex}_3(\eps)$ for $\eps \geq \frac{2}{5}$; and the regime $\eps\in(0,\frac{1}{3}]$ follows from work of Brandt~\cite{Brandt}. In a series of papers, {\L}uczak, Polcyn and Reiher determined ${\rm ex}_3(\eps)$ for various ranges of $\eps$ (see~\cite{LPR1},~\cite{LPR2}, and~\cite{LPR3} for more details). Finally, in~\cite{LPR3}, they announced a complete solution.

Determining ${\rm ex}_q(n,m)$ for functions $m(n)$ growing slower than linear has also attracted a lot of attention,
in particular determining the \emph{phase transitions} where decreasing $m(n)$ causes a large decrease in ${\rm ex}_q(n,m)$ (for a precise definition see~\cite{BHS}).
For example, ${\rm ex}_5(n,n) = \lfloor\frac{3}{4}\binom{n}{2}\rfloor$ by Tur\'an's theorem, while ${\rm ex}_5(n,o(n))=\frac{1}{2}\binom{n}{2}+o(n^2)$, so we may say there is `a phase transition at $n$'. Answering a question of Erd\H{o}s and S\'os,
Balogh, Hu and Simonovits~\cite{BHS} showed that ${\rm ex}_5(n,o(\sqrt{n\log n})) = o(n^2)$, while ${\rm ex}_5(n,c\sqrt{n\log n}) \geq \frac{1}{2}\binom{n}{2}+o(n^2)$ for infinitely many $n$ and any $c > 1$, so there is another phase transition at $\sqrt{n\log n}$.
Sudakov~\cite{Sudakov} showed that ${\rm ex}_{4}(n,e^{-\omega(n)\sqrt{\log n}}n) = o(n^2)$, while Fox, Loh and Zhao~\cite{FLZh} showed that ${\rm ex}_4(n,e^{-o(\sqrt{\log n/ \log \log n})}n) = \frac{1}{4}\binom{n}{2}+o(n^2)$. So for $q=4$, there is a phase transition somewhere between these functions.
See also~\cite{BD,KKL} for other results of this type.

Ramsey-Tur\'an type problems have also been studied for graphs other than cliques~\cite{BMSh,EHSSz,NP,Sudakov}, in hypergraphs~\cite{BL-1,ES2,FranklRodl,MubayiRodl,MubayiSos,Sidorenko}, in the multicolour setting~\cite{EHSSSz2,KKL,LNS,Schelp} and in a `counting' setting~\cite{BLSh}.
A particular tantalising open problem concerns the octahedron graph.

\begin{probx}[\cite{EHSSSz2,EHSSz,SimSos,Sudakov}]
Is $\rt_2(n,K_{2,2,2},o(n))=o(n^2)$?
\end{probx}

\medskip

\noindent
\textbf{Organisation.} The constructions for Theorems~\ref{GBE} and~\ref{thm-2tri-exact} will be given in Sections~\ref{sec-GBE} and~\ref{sec-LB-Thm1} respectively. The proofs for Theorems~\ref{thm-upper-main} and~\ref{thm-upper-easy} are in Section~\ref{sec-UB}. In Section~\ref{sec-remarks}, we give some concluding remarks.

\medskip

\noindent
\textbf{Notation.} We write $[a,b]:=\{a,\ldots,b\}\subseteq \mathbb{Z}$, for all $a,b\in \mathbb{Z}$ with $a\le b$. Whenever $a=1$, then we use $[b]$ instead of $[a,b]$. 

We will use bold face lower case symbols, e.g.~$\V{w}, \V{x}, \V{y}, \V{z}$, for vectors in $\sC^{k}$, equipped with the standard inner product $\innerp{\V{w},\V{z}}=\sum_{i\in [k]}w_i\bar{z_i}$. We write $|\V{z}|=\sqrt{\innerp{\V{z},\V{z}}}$ for its $\ell_2$-norm.

\section{Properties of high dimensional spheres}\label{sec-propertiesHDS}
In this section, we list some useful properties of high dimensional spheres, which will be used for our constructions throughout Sections~\ref{sec-GBE} and~\ref{sec-LB-Thm1}.

For $k\in\sN$, let $\sS^{k-1}(\sR)\subseteq \sR^{k}$ denote the standard $(k-1)$-dimensional real unit sphere, and write
$$\sS^{k-1}(\sC)=\left\{(z_1,\ldots, z_{k})\in\sC^{k}: \sum_{i=1}^{k}|z_i|^2=1\right\}$$
for the $(k-1)$-dimensional complex unit sphere.  As the map
\begin{equation}\label{eq: isom}
	\varphi:~ (x_1+iy_1,\ldots, x_k+iy_k)\longmapsto (x_1,y_1,x_2,y_2\ldots,x_k,y_k)
\end{equation}
from $\sS^{k-1}(\sC)$ to $\sS^{2k-1}(\sR)$ is an invertible isometry, various properties of high dimensional real spheres extend naturally to the complex ones.

Throughout the paper, when given a high dimensional unit sphere, we will write $\lambda$ for the Lebesgue measure, normalised so that the unit sphere has measure 1. For two subsets of a unit sphere $A$ and $B$, denote by $d_{\max}(A,B):=\sup\{|\V{a}-\V{b}|: \V{a}\in A,~\V{b}\in B \}$ the Euclidean distance between them. In the case $A=B$, write $\diam{A}:=d_{\max}(A,A)$ for the \emph{diameter} of $A$.  

A \emph{spherical cap} is the smaller intersection of the unit sphere with a half-space. Given a spherical cap $C$ bounded by some hyperplane $H$, we call the point in $C$ with maximum Euclidean distance to $H$ the \emph{centre} of the spherical cap. The distance from the centre to $H$ is the \emph{height} of the spherical cap. Note that $\diam{C}$ is just the diameter of the intersection of $C$ and $H$. 

We will use the following lower and upper bounds on the measure of spherical caps. They follow from the known results of the real sphere and the use of the isometry $\varphi$ in~\eqref{eq: isom}.

\begin{lemma}[\cite{BL-1}]\label{lem:cap-LB}
	For all $\delta>0$ and integers $k\ge 3$ , let $B\subseteq \sS^{k-1}(\sC)$ be the spherical cap consisting of all points with distance at most $\sqrt{2}-\delta/\sqrt{2k}$ from a fixed point in $\sS^{k-1}(\sC)$. Then $\lambda(B)\ge 1/2-\sqrt{2}\delta$.
\end{lemma}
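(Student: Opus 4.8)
The plan is to transport the problem to the real sphere via the isometry $\varphi$ of~\eqref{eq: isom} and then bound the measure of a slightly sub-hemispherical cap by a single one-dimensional integral. Since $\varphi\colon\sS^{k-1}(\sC)\to\sS^{2k-1}(\sR)$ preserves both Euclidean distance and, after the common normalisation, the measure $\lambda$, it is enough to prove the estimate on $\sS^{2k-1}(\sR)\subseteq\sR^{2k}$. After a rotation we may take the fixed point to be $\V{e}_1$; for a point $\V{x}$ of the sphere one has $|\V{x}-\V{e}_1|^2=2-2x_1$, so the defining condition $|\V{x}-\V{e}_1|\le\sqrt2-\delta/\sqrt{2k}$ rewrites as $x_1\ge h$ where $h=\tfrac{\delta}{\sqrt k}-\tfrac{\delta^2}{4k}\le\tfrac{\delta}{\sqrt k}$. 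We may assume $\sqrt2\,\delta<\tfrac12$, since otherwise the claimed bound is vacuous; then $0<\delta/\sqrt k<1$ and, by monotonicity of $\lambda$, $\lambda(B)=\lambda\{x_1\ge h\}\ge\lambda\{x_1\ge\delta/\sqrt k\}=\tfrac12-\lambda\{0\le x_1<\delta/\sqrt k\}$.

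Next I would invoke the standard fact that the first coordinate of a uniformly random point of $\sS^{2k-1}(\sR)\subseteq\sR^{2k}$ has density proportional to $(1-t^2)^m$ on $[-1,1]$, where $m:=k-\tfrac32$. Hence, for $0\le a\le 1$,
\[
\lambda\{0\le x_1<a\}=\frac{\int_0^a(1-t^2)^m\,dt}{\int_{-1}^1(1-t^2)^m\,dt}\le\frac{a}{\int_{-1}^1(1-t^2)^m\,dt},
\]
and the task reduces to a lower bound on the normalising integral. Restricting the integral to $|t|\le1/\sqrt{2m}$ (permissible as $m\ge\tfrac32$, so this interval lies in $[-1,1]$) and using $(1-t^2)^m\ge(1-\tfrac1{2m})^m$ there, it suffices to check $(1-\tfrac1{2m})^m\ge\tfrac12$ for all $m\ge\tfrac32$. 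This follows from the elementary inequality $\ln(1-x)\ge-x-x^2$ on $[0,\tfrac12]$ — the function $f(x)=\ln(1-x)+x+x^2$ satisfies $f(0)=0$ and $f'(x)=\tfrac{x(1-2x)}{1-x}\ge0$ there — applied with $x=\tfrac1{2m}\le\tfrac13$, giving $(1-\tfrac1{2m})^m\ge e^{-1/2-1/(4m)}\ge e^{-2/3}>\tfrac12$. Therefore $\int_{-1}^1(1-t^2)^m\,dt\ge 1/\sqrt{2m}\ge 1/\sqrt{2k}$.

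Putting the two estimates together with $a=\delta/\sqrt k$ yields $\lambda\{0\le x_1<\delta/\sqrt k\}\le(\delta/\sqrt k)\sqrt{2k}=\sqrt2\,\delta$, and hence $\lambda(B)\ge\tfrac12-\sqrt2\,\delta$, as desired. The main obstacle here is bookkeeping rather than anything conceptual: the hypothesis $k\ge3$ is used exactly to guarantee $m\ge\tfrac32$, which is precisely what makes the truncation estimate $(1-\tfrac1{2m})^m\ge\tfrac12$ valid and what produces the constant $\sqrt2$ on the nose, so some care is needed to calibrate the truncation radius and to dispose of the vacuous regime. (Alternatively one could identify $\int_{-1}^1(1-t^2)^m\,dt$ with a Beta function and use $\Gamma$-ratio bounds such as Wendel's inequality, but the truncation argument stays elementary and gives the stated constant immediately.)
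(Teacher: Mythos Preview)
Your proof is correct. The paper does not actually prove this lemma; it is quoted from~\cite{BL-1}, with the paper only remarking that such bounds ``follow from the known results of the real sphere and the use of the isometry $\varphi$''. Your argument supplies exactly such a direct proof: the transport via $\varphi$, the rewriting of the cap condition as $x_1\ge h$, and the one-dimensional density bound are all standard and carried out cleanly. The calibration (truncating at $|t|\le 1/\sqrt{2m}$ and verifying $(1-\tfrac{1}{2m})^m\ge\tfrac12$ via $\ln(1-x)\ge -x-x^2$) is a nice way to recover the constant $\sqrt{2}$ exactly and to explain why the hypothesis $k\ge 3$ is used.
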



\begin{lemma}[\cite{Tkocz}]\label{lem:cap-UB}
	Let $\alpha\in [0,1)$ and $C\subseteq \sS^{k-1}(\sC)$ be a spherical cap with height $1-\alpha$. Then $\lambda(C)\le e^{-k\alpha^2}$.
\end{lemma}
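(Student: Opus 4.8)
The plan is to transfer the statement to the real sphere via the isometry $\varphi$ of~\eqref{eq: isom} and then invoke the classical upper bound for the normalised measure of a real spherical cap. Since $\varphi$ is an invertible isometry $\sS^{k-1}(\sC)\to\sS^{2k-1}(\sR)$ and is the restriction to the sphere of a linear isometry of $\sR^{2k}$, it also preserves $\lambda$ and maps half-spaces to half-spaces, hence carries a spherical cap of height $1-\alpha$ to one of the same height and of equal measure. Writing $N:=2k$, it therefore suffices to show that every cap $C\subseteq\sS^{N-1}(\sR)$ of height $1-\alpha$ satisfies $\lambda(C)\le e^{-N\alpha^2/2}=e^{-k\alpha^2}$. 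By rotational invariance of $\lambda$ we may take $C=\{x\in\sS^{N-1}(\sR):x_1\ge\alpha\}$ — the pole $e_1$ is the point of $C$ farthest from the bounding hyperplane $\{x_1=\alpha\}$, at distance $1-\alpha$, and since $\alpha\ge 0$ this is the smaller of the two pieces — and the bound is trivial when $\alpha=0$, so we assume $\alpha\in(0,1)$.

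For the real-sphere estimate I would use the Gaussian representation of $\lambda$: if $g=(g_1,\dots,g_N)$ is a standard Gaussian vector in $\sR^N$, then $g/|g|$ has law $\lambda$, so $\lambda(C)=\pr[\,g_1\ge\alpha|g|\,]$. This event forces $g_1\ge 0$, so squaring yields the equivalent inequality $g_1^2\ge\alpha^2(g_1^2+S)$ — here $S:=g_2^2+\dots+g_N^2$ is a $\chi^2$ variable with $N-1$ degrees of freedom, independent of $g_1$ — which rearranges to the condition $g_1\ge\tfrac{\alpha}{\sqrt{1-\alpha^2}}\sqrt{S}$. Conditioning on $S$, the one-sided Gaussian tail bound $\pr[g_1\ge u]\le\tfrac12 e^{-u^2/2}$ (for $u\ge 0$) together with the Laplace transform $\ex[e^{-\beta S}]=(1+2\beta)^{-(N-1)/2}$ taken at $\beta=\tfrac{\alpha^2}{2(1-\alpha^2)}$, for which $1+2\beta=(1-\alpha^2)^{-1}$, give
\[
\lambda(C)\ \le\ \tfrac12\,\ex_S\!\Big[e^{-\frac{\alpha^2}{2(1-\alpha^2)}\,S}\Big]\ =\ \tfrac12\,(1-\alpha^2)^{(N-1)/2}\ \le\ \tfrac12\,e^{-\alpha^2(N-1)/2}.
\]
Substituting $N=2k$, the right-hand side equals $\tfrac12 e^{\alpha^2/2}e^{-k\alpha^2}$, which is at most $e^{-k\alpha^2}$ because $\alpha<1$ forces $\tfrac12 e^{\alpha^2/2}<\tfrac{\sqrt{e}}{2}<1$. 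This gives the bound claimed in the lemma.

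The only step needing genuine care is the constant. The crude Chernoff tail $\pr[g_1\ge u]\le e^{-u^2/2}$ would yield only $\lambda(C)\le e^{-\alpha^2(N-1)/2}=e^{\alpha^2/2}e^{-k\alpha^2}$ after putting $N=2k$, which is \emph{weaker} than the claimed $e^{-k\alpha^2}$; one genuinely needs the factor $\tfrac12$ from the one-sided tail, and it is precisely the hypothesis $\alpha<1$ that supplies the required slack, since then $\tfrac12 e^{\alpha^2/2}<1$. Everything else — the measure- and height-preservation of $\varphi$, the identification of the cap, the Gaussian representation, and the $\chi^2$ Laplace transform — is routine bookkeeping, and the resulting inequality is exactly the cited real-sphere bound, to which $\varphi$ reduces the complex case.
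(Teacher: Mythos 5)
Your proof is correct. Note, though, that the paper does not actually prove this lemma: it is cited as a known result of Tkocz together with the transfer through the isometry $\varphi$, so there is no ``paper's own proof'' to compare against beyond that one-line reduction. What you have done is re-derive the real-sphere cap estimate from scratch by a probabilistic route (Gaussian representation of the uniform measure plus the $\chi^2$ Laplace transform), whereas Tkocz's note proves it by a short, elementary comparison of one-dimensional integrals over the sphere, which actually yields the slightly stronger exponent $\frac12(1-\alpha^2)^{N/2}$ for a cap in $\sS^{N-1}(\sR)$ rather than your $\frac12(1-\alpha^2)^{(N-1)/2}$. That loss of a half power is exactly why your argument has to lean on the one-sided tail constant $\frac12$ and the hypothesis $\alpha<1$ to absorb the surplus $e^{\alpha^2/2}$; you correctly flag this as the one delicate point, and your handling of it is sound ($\frac12 e^{\alpha^2/2}<\frac{\sqrt e}{2}<1$). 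With Tkocz's sharper $N/2$ exponent the computation $\frac12(1-\alpha^2)^k\le\frac12 e^{-k\alpha^2}\le e^{-k\alpha^2}$ would go through without needing any slack. Your treatment of the reduction $\varphi$ — that it is a linear isometry of $\sR^{2k}$, preserves $\lambda$, and carries caps of height $1-\alpha$ to caps of the same height — is the same observation the paper makes in the sentence preceding Lemmas~2.1--2.2. So the proposal is correct, takes the same first step, but supplies a genuinely different (probabilistic, self-contained, marginally lossier) proof of the underlying real-sphere inequality in place of citing it.
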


Recall that a spherical cap with height $1-\alpha$ has diameter $2\sqrt{1-\alpha^2}$.
It is a simple consequence~(see~\cite{BL-1}) of the isoperimetric inequality for spheres~\cite{Schmidt}, that for any sets $A,B,C\subseteq \sS^{k-1}(\sC)$ of equal measure, if $C$ is a spherical cap, then $d_{\max}(A,B)\ge \diam{C}$. Altogether, we have the following 2-set version of Lemma~\ref{lem:cap-UB}.

\begin{lemma}\label{lem:cap-2set}
	Let $\nu\in (0,1)$ and $A,B\subseteq\sS^{k-1}(\sC)$ with $\lambda(A), \lambda(B)>e^{-k\nu/2}$, then $d_{\max}(A,B)\ge 2-\nu$.
\end{lemma}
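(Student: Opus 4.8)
The plan is to combine the isoperimetric statement quoted just above the lemma (for sets $A,B,C$ of equal measure with $C$ a spherical cap, $d_{\max}(A,B)\ge\diam C$) with the cap-measure bound of Lemma~\ref{lem:cap-UB}, after a routine reduction to the equal-measure case. First I would observe that $d_{\max}$ is monotone: if $A'\subseteq A$ and $B'\subseteq B$ then $d_{\max}(A',B')\le d_{\max}(A,B)$, so it suffices to prove the bound after \emph{shrinking} $A$ and $B$ to subsets of exactly equal measure. Concretely, set $\mu:=\min\{\lambda(A),\lambda(B)\}>e^{-k\nu/2}$ and pick measurable $A_0\subseteq A$, $B_0\subseteq B$ with $\lambda(A_0)=\lambda(B_0)=\mu$ (possible since the Lebesgue measure on the sphere is atomless). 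It is enough to bound $d_{\max}(A_0,B_0)$ from below.

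Next I would introduce the spherical cap $C$ of measure exactly $\mu$, and let $1-\al$ be its height, so that by Lemma~\ref{lem:cap-UB} we have $\mu=\lambda(C)\le e^{-k\al^2}$. Since $\mu>e^{-k\nu/2}$, this forces $e^{-k\al^2}>e^{-k\nu/2}$, i.e.\ $\al^2<\nu/2$, hence $\al<\sqrt{\nu/2}$. Now the isoperimetric consequence quoted in the excerpt gives $d_{\max}(A_0,B_0)\ge\diam C$, and by the remark immediately preceding the lemma a cap of height $1-\al$ has diameter $2\sqrt{1-\al^2}$. Therefore
\[
 d_{\max}(A,B)\ \ge\ d_{\max}(A_0,B_0)\ \ge\ 2\sqrt{1-\al^2}\ >\ 2\sqrt{1-\nu/2}.
\]
It remains to check the elementary inequality $2\sqrt{1-\nu/2}\ge 2-\nu$ for $\nu\in(0,1)$: squaring both sides (both are nonnegative, since $\nu<1<2$), this is $4(1-\nu/2)\ge 4-4\nu+\nu^2$, i.e.\ $4-2\nu\ge 4-4\nu+\nu^2$, i.e.\ $2\nu\ge\nu^2$, i.e.\ $\nu\le 2$, which holds. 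Chaining the inequalities gives $d_{\max}(A,B)\ge 2-\nu$, as desired.

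The argument is essentially bookkeeping on top of the two cited black boxes, so I do not expect a genuine obstacle; the only points needing a word of care are (i) the existence of equal-measure subsets $A_0,B_0$ (atomlessness of $\lambda$, or alternatively a continuity/slicing argument), and (ii) making sure the cap $C$ whose existence we invoke genuinely has height of the form $1-\al$ with $\al\in[0,1)$ — i.e.\ that a cap of every measure in $(0,1)$ arises this way, which is clear since the map sending height to cap-measure is a continuous bijection. Everything else is the one-line estimate $\al<\sqrt{\nu/2}$ followed by the elementary inequality $2\sqrt{1-\nu/2}\ge 2-\nu$.
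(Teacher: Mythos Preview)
Your argument is correct and is exactly the route the paper intends: the lemma is stated immediately after the isoperimetric consequence and the cap bound with the words ``Altogether, we have the following 2-set version of Lemma~\ref{lem:cap-UB}'', and no further proof is given; you have simply written out the implicit computation.

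One small correction to your caveat~(ii): by the paper's definition a spherical cap is the \emph{smaller} half of a hyperplane slice, so caps have measure in $(0,\tfrac12]$, not $(0,1)$. Thus if $\mu=\min\{\lambda(A),\lambda(B)\}>\tfrac12$ you cannot pick a cap of measure $\mu$. This is harmless: in that case shrink instead to subsets of measure $\tfrac12$; the corresponding cap is a hemisphere with $\alpha=0$ and diameter $2\ge 2-\nu$, and the rest of your chain goes through unchanged.
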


The following folklore result partitions the sphere into small pieces of equal measure (see e.g.~\cite{FS}).
\begin{lemma}\label{lem: partition-sphere}
	There exists $C>0$ such that the following holds. Let $0<\delta<1$ and $n\ge (C/\delta)^k$. Then $\sS^{k-1}(\sR)$ can be partitioned into $n$ pieces of equal measure, each of diameter at most $\delta$.
\end{lemma}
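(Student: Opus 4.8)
The plan is a two-stage argument: first cut the sphere into finitely many pieces of diameter well below $\delta$, using far fewer than $n$ of them, and only then reassemble these pieces into exactly $n$ parts of measure $1/n$ each without spoiling the diameter bound. I would not try to build an ``equal-area'' partition directly; instead I would start from an arbitrary small-diameter partition and correct the measures afterwards, exploiting the large gap between the hypothesis $n\ge(C/\delta)^k$ and the number of pieces one naturally starts with, which is only of order $\delta^{-(k-1)}$. (One may assume $k\ge2$, which is the only case needed.)

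\emph{Stage 1 (fine Voronoi cells).} Fix $\eta:=\delta/7$ and take a maximal $\eta$-separated subset $X\subseteq\sS^{k-1}(\sR)$, which is finite by compactness. As $X$ is also an $\eta$-net, its Voronoi cells $\{Q_x:x\in X\}$ partition the sphere into $m:=|X|$ measurable sets (ties on the null boundary set broken arbitrarily), each contained in a ball of radius $\eta$ and so of diameter at most $2\eta$. Each $Q_x$ contains the cap of radius $\eta/2$ about $x$, and these caps are pairwise disjoint; comparing their measures with $1$ produces an absolute constant $C_0$ with $m\le(C_0/\delta)^{k-1}$ and $\lambda(Q_x)\ge(\delta/C_0)^{k-1}$ for all $x$. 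Choosing $C$ large in terms of $C_0$, the hypothesis $n\ge(C/\delta)^k$ then forces $n\ge m$ and --- the point that makes everything work --- $\lambda(Q_x)\ge1/n$ for every $x$.

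\emph{Stage 2 (balancing along a spanning tree).} The graph $H$ on $X$ with $x\sim y$ iff $Q_x,Q_y$ share a facet is connected; fix a spanning tree $T$ of $H$, rooted at some $\rho$, write $\lambda_x:=\lambda(Q_x)$, and recall that since $\lambda$ is atomless on the sphere one can carve from any measurable set a measurable subset of any prescribed measure at most its own. Process the cells in post-order of $T$. On reaching a non-root cell $x$, it will already hold, besides $Q_x$, a disjoint family of ``surplus'' regions received from its children, each contained in a child cell and of total measure $\Theta_x<(\deg_T x)/n$; inside $Q_x$ together with these surpluses (total measure $\lambda_x+\Theta_x$) carve $c_x:=\lfloor n(\lambda_x+\Theta_x)\rfloor\ge1$ parts of measure exactly $1/n$, arranging that each received surplus region lies entirely inside one of these parts and completing the parts with fresh subsets of $Q_x$ (the arithmetic closes because $\lambda_x\ge1/n$). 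Then pass the leftover, of measure $\lambda_x+\Theta_x-c_x/n\in[0,1/n)$ and taken as a fresh subset of $Q_x$, up to the parent. At $\rho$, carve the remaining region into parts of measure $1/n$: a telescoping computation gives $n(\lambda_\rho+\Theta_\rho)=n-\sum_{x\ne\rho}c_x\in\sN$, so nothing is left over and exactly $n$ parts have been produced in total.

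For the diameter bound, note that every part created at a cell $x$ lies in $Q_x$ together with surplus regions coming from $T$-children of $x$, each of which sits inside a cell adjacent to $Q_x$; hence every part is contained in the closed star $Q_x\cup\bigcup_{y\sim x}Q_y$. Any cell adjacent to $Q_x$ meets $\overline{Q_x}$, so by the triangle inequality this star has diameter at most $3\cdot2\eta=\tfrac{6}{7}\delta<\delta$, as required. The step I expect to be the main obstacle is precisely the balancing in Stage 2: the naive scheme --- carve as many $1/n$-parts as possible out of each cell independently and then glue the residual scraps together --- fails, since scraps of far-apart cells would have to be combined into a single part; routing the residue up a spanning tree while \emph{immediately absorbing} each incoming surplus into a finished part is what confines every part to a single Voronoi star, and verifying that this can always be carried out (enough room inside $Q_x$, nonnegativity of $c_x$, and the final count equalling $n$) is the crux of the proof.
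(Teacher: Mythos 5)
The paper does not prove this lemma: it cites it as folklore with a pointer to Feige--Schechtman, so there is no paper-internal proof to compare against, and you are genuinely supplying a proof that the paper omits. Your two-stage plan (fine Voronoi cells, then rebalance along a spanning tree of the dual adjacency graph) is sound, and both the measure-counting in Stage~1 and the telescoping count at the root in Stage~2 check out: $n(\lambda_\rho+\Theta_\rho)=n-\sum_{x\ne\rho}c_x$ is a positive integer, and the hypothesis $n\ge(C/\delta)^k$ indeed absorbs the dimension-dependent constants in the cap-measure estimates so that $\lambda(Q_x)\ge 1/n$ for a universal $C$.

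There is, however, one genuine overclaim in Stage~2. You assert that one can arrange ``each received surplus region lies entirely inside one of these parts.'' This is a bin-packing constraint on $j$ items of sizes $\theta_1,\dots,\theta_j\in[0,1/n)$ into $c_x=\lfloor n(\lambda_x+\Theta_x)\rfloor$ bins of capacity $1/n$, and it can fail: take $\lambda_x=1.1/n$ and three children each passing up $0.6/n$; then $c_x=\lfloor 1.1+1.8\rfloor=2$, but no two items of size $0.6/n$ fit together, so three bins are needed. Nothing in your construction rules this out. Fortunately, the constraint is also unnecessary. What you actually need is only that the $c_x$ parts \emph{cover} all of $S_1\cup\dots\cup S_j$ so that the leftover passed to the parent is a fresh subset of $Q_x$ alone. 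This is always possible: the parts have total capacity $c_x/n\ge\Theta_x$ (since $\lambda_x\ge 1/n$ gives $c_x\ge\lfloor n\Theta_x+1\rfloor\ge n\Theta_x$), so using the atomless property one can distribute the surplus material across the parts, splitting surplus regions if necessary, and top each part up with fresh material from $Q_x$; the leftover then has measure $\lambda_x+\Theta_x-c_x/n\in[0,1/n)\subseteq[0,\lambda_x]$ and sits inside $Q_x$. Your diameter argument is unaffected, because each part is still contained in $Q_x\cup\bigcup_{\text{children }y}Q_y$, hence in the closed star of $Q_x$, whether or not surpluses are split. (One further cosmetic point: defining adjacency in $H$ as $\overline{Q_x}\cap\overline{Q_y}\ne\varnothing$ rather than ``share a facet'' sidesteps any degeneracy worries about connectedness and still supports the $6\eta$ diameter bound.) With these small repairs the proof is complete.
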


We also need the following geometric result, which lies at the heart of the original construction of the Bollob\'as-Erd\H os graph~\cite{BolErd76}.
\begin{theorem}[Bollob\'as-Erd\H os Rhombus lemma]\label{thm-k4}
	For all $k \in \mathbb{N}$ and all $0<\mu<1/4$, there do not exist four points $p_1,p_2,q_1,q_2\in\sS^k(\sR)$ such that $d(p_1,p_2)\ge 2-\mu$, $d(q_1,q_2)\ge 2-\mu$, and $d(p_i,q_j)\le\sqrt{2}-\mu$ for all $i,j\in [2]$.
\end{theorem}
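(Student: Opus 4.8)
The plan is to argue by contradiction, converting the Euclidean conditions into inner‑product conditions and exploiting the elementary fact that a near‑antipodal pair on the unit sphere sums to a short vector. All four points are unit vectors, so $d(x,y)^2=|x-y|^2=2-2\langle x,y\rangle$. Suppose such $p_1,p_2,q_1,q_2$ exist, and set $u:=p_1+p_2$ and $v:=q_1+q_2$. From $d(p_1,p_2)\ge 2-\mu$ we get $(2-\mu)^2\le 2-2\langle p_1,p_2\rangle$, hence $\langle p_1,p_2\rangle\le -1+2\mu-\tfrac{\mu^2}{2}$, and therefore
\[
|u|^2 = 2 + 2\langle p_1,p_2\rangle \le 4\mu-\mu^2 ;
\]
by symmetry $|v|^2\le 4\mu-\mu^2$ as well.

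Next I would use the cross‑distance bounds. For each $i,j\in\{1,2\}$ the inequality $d(p_i,q_j)\le\sqrt2-\mu$ gives $(\sqrt2-\mu)^2\ge 2-2\langle p_i,q_j\rangle$, i.e.\ $\langle p_i,q_j\rangle\ge\sqrt2\,\mu-\tfrac{\mu^2}{2}$. Summing over the four pairs,
\[
\langle u,v\rangle = \sum_{i,j\in\{1,2\}}\langle p_i,q_j\rangle \ge 4\sqrt2\,\mu - 2\mu^2 .
\]
On the other hand, Cauchy--Schwarz together with the norm bounds from the first step yields $\langle u,v\rangle\le |u|\,|v|\le 4\mu-\mu^2$. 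Comparing the two estimates gives $4\sqrt2\,\mu-2\mu^2\le 4\mu-\mu^2$, hence $4(\sqrt2-1)\le\mu$, which contradicts $\mu<\tfrac14$ (indeed $4(\sqrt2-1)>\tfrac32$). Since the only ingredients are the distance identity on the unit sphere and Cauchy--Schwarz, the argument is dimension‑free and handles all $k\in\mathbb{N}$ simultaneously.

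The computation is short enough that there is no real technical obstacle; the one point to get right is that a single near‑antipodal pair does not suffice. If one only used $|u|<2\sqrt\mu$ and crudely bounded $|v|\le|q_1|+|q_2|=2$, then Cauchy--Schwarz would give $\langle u,v\rangle\le 4\sqrt\mu$, which is compatible with $\langle u,v\rangle\ge 4\sqrt2\,\mu-2\mu^2$ for small $\mu$ and produces no contradiction. The crux is therefore to pair the two short vectors $u$ and $v$ against \emph{each other}, so that the $\sqrt\mu$‑saving is collected twice; recasting the rhombus hypothesis in this symmetric form is the whole idea.
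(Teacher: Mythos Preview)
Your argument is correct. The paper does not actually give a proof of this statement: it is quoted as a known result from Bollob\'as and Erd\H{o}s~\cite{BolErd76} and then used as a black box in Section~\ref{sec-LB-Thm1}. Your proof, passing to the short vectors $u=p_1+p_2$ and $v=q_1+q_2$ and comparing the lower bound $\langle u,v\rangle\ge 4\sqrt{2}\mu-2\mu^2$ coming from the four cross--distance constraints against the Cauchy--Schwarz upper bound $\langle u,v\rangle\le |u|\,|v|\le 4\mu-\mu^2$, is exactly the standard approach and all the arithmetic checks out; indeed, this is essentially how the original Bollob\'as--Erd\H{o}s argument runs. Your closing remark about why one near-antipodal pair is not enough is also accurate and worth keeping.
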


\section{Complex Bollob\'as-Erd\H os graph}\label{sec-GBE}
Fix integers $1\le \ell <p$. For Theorem~\ref{GBE}, we will construct a graph $G$ with vertex partition $W \cup Z$ where $|W|=|Z|=n$, satisfying the following:
\stepcounter{propcounter}
\begin{enumerate}[label = {\bfseries \Alph{propcounter}\arabic{enumi}}]
	\item\label{CBE:p-ind} $\al_{p}(G)= o(n)$;
	\item\label{CBE:sparse-inside} $e(G[W]),e(G[Z]) = o(n^2)$;
    \item\label{CBE:density} $e(G)= \left(\frac{\ell}{p}-o(1)\right)n^2$;
    \item\label{CBE:clique} if $\ell\le p/2$, then $G$ is $K_{p+\ell+1}$-free.
\end{enumerate}
Corollary~\ref{cor} then follows readily by joining completely a suitable number of graphs of appropriate sizes with sublinear $p$-independence number to the above graph $G$.

\subsection{Construction}
Choose constants
\begin{align}\label{eq-hierarchy}
	0 < 1/k\ll \eps\ll 1/K\ll  1/p,\quad \mu:=\eps/\sqrt{2k}, \quad\text{ and }\quad n \geq \left(\frac{4C_{\ref{lem: partition-sphere}}}{\mu}\right)^{2k},
\end{align}
where $C_{\ref{lem: partition-sphere}}$ is the constant obtained from Lemma~\ref{lem: partition-sphere}.
Using the isometry $\varphi$ in~\eqref{eq: isom} and Lemma~\ref{lem: partition-sphere}, we can partition $\sS^{k-1}(\sC)$ into $n$ domains $D_1,\ldots,D_n$ with equal measure and  diameter at most $\frac{\mu}{4}$. Next, for all $i\in [n]$, choose two arbitrary points $\V{w}_i,\V{z}_i\in D_i$, and let $W:=\{\V{w}_1,\ldots,\V{w}_n\}$ and $Z:=\{\V{z}_1,\ldots,\V{z}_n\}$. Set $\rho:=\cos(2\pi/ p)+i\sin(2\pi/ p)$ to be the primitive $ p$-th root of unity. 
Note that $\V{w} \mapsto \rho \V{w}$ rotates $\sS^{k-1}(\mathbb{C})$.
The edge set of $G$ is defined as follows (see also Figure~\ref{fig:GBE}).
\stepcounter{propcounter}
\begin{enumerate}[label = {\bfseries \Alph{propcounter}\arabic{enumi}}]
	\item\label{it-insideW} Two vertices $\V{w}, \V{w}'\in W$ form an edge if and only if there exists $h\in [p-1]$ such that 
	$$\bigabs{\V{w}-\rho^h\V{w}'}\le \sqrt{\mu}.$$
	Define $E(G[Z])$ similarly. For such a pair, we say $\V{w}$ is an \emph{$h$-rotation} of $\V{w}'$.
	
	\item\label{it-between} A cross pair $(\V{w}_i, \V{z}_j)\in W\times Z$ forms an edge if and only if the following hold.
	\begin{itemize}
		\item[(i)] For all $h\in \sZ_p$,
		$$\bigabs{\im{\rho^h\innerp{\V{w}_i,\V{z}_j}}}\ge K\mu.$$
		
		\item[(ii)] There exists $\alpha\in [0,\frac{2\pi \ell}{p}]$ such that 
		$$e^{-i\alpha}\innerp{\V{w}_i,\V{z}_j}\in [0,1].$$
	\end{itemize}
\end{enumerate}

\begin{figure}[h]
	\centering
\scalebox{0.88}{	
	\begin{tikzpicture}	
	\filldraw[draw=none,color=gray] (0,0) -- (3,0) arc (0:120:3) -- (0,0);
		
	\begin{scope}[rotate=30]
		\draw[line width = 5mm, color=white] (0,-3) -- (0,3);
	\end{scope}
	\begin{scope}[rotate=90]
		\draw[line width = 5mm, color=white] (0,-3) -- (0,3);
	\end{scope}
	\begin{scope}[rotate=150]
		\draw[line width = 5mm, color=white] (0,-3) -- (0,3);
	\end{scope}
	
	\begin{scope}[rotate=30]
		\draw[line width = 5mm, color=Red,opacity=0.25] (0,-3) -- (0,3);
	\end{scope}
	\begin{scope}[rotate=90]
		\draw[line width = 5mm, color=Red,opacity=0.25] (0,-3) -- (0,3);
	\end{scope}
	\begin{scope}[rotate=150]
		\draw[line width = 5mm, color=Red,opacity=0.25] (0,-3) -- (0,3);
	\end{scope}
	
	\draw[thick,densely dotted,<->] (-3.5,0) -- (3.5,0);
	\draw[thick,densely dotted,<->] (0,-3.5) -- (0,3.5);
	
	\draw[<->] (0,-3.7) -- (3,-3.7);
	\draw (1.5,-3.6)  node[draw=none,fill=none,label=below:{$1$}]  {};
	
	\begin{scope}[rotate=-60]
		\draw[] (3.2,-0.25) -- (3.2,0.25);
		\draw (3.1,-0.25) -- (3.3,-0.25);
		\draw (3.1,0.25) -- (3.3,0.25);
	\end{scope}
	
	\foreach \x in {0,60,120,180,240,300} 
	{
		\draw[black] (0,0) -- (\x:3);
	}
	
	\draw[black,thick] (0,0) circle (3cm); 
	
	\draw (3.4,0)  node[draw=none,fill=none,label=right:{${\rm Re}(z)$}]  {};
	\draw (0,3.4)  node[draw=none,fill=none,label=above:{${\rm Im}(z)$}]  {};
	\draw (1.4,-3)  node[draw=none,fill=none,label=right:{$2K\mu$}]  (0) {};
	
	\draw (0.7,0) arc (0:120:0.7);
	\draw (120:0.5)  node[draw=none,fill=none,label=left:{$\frac{2\pi}{3}$}]  (0) {};
	
	\draw (0:2.8)  node[draw=none,fill=none,label=20:{$1$}]   {};
	\draw (120:2.8)  node[draw=none,fill=none,label=120:{$\rho$}]   {};
	\draw (240:2.8)  node[draw=none,fill=none,label=240:{$\rho^2$}]   {};
	
	\draw (0:3)  node[circle,inner sep=2,fill = black]   {};
	\draw (120:3)  node[circle,inner sep=2,fill = black]   {};
	\draw (240:3)  node[circle,inner sep=2,fill = black]   {};
	
\end{tikzpicture}
}
		\caption{An illustration of the position of $\innerp{\V{w}_i,\V{z}_j}$ for $p=3$. The pink stripes are the ones excluded in~\ref{it-between}(i); while the dark regions correspond to ~\ref{it-between}(ii).\label{fig:GBE}}
\end{figure}

\subsection{Structure of the inner graphs}
In this subsection, using isoperimetry and concentration of measure, we shall derive that the inner graphs $G[W], G[Z]$ are $K_{p+1}$-free graphs (Lemma~\ref{obs-samerotation}) with sublinear $p$-independence number (Lemma~\ref{lem-independenceGBEoneside}) and zero edge density (Lemma~\ref{lem-maxinsidedeg}), thus verifying~\ref{CBE:p-ind} and~\ref{CBE:sparse-inside}.

\begin{lemma}\label{obs-samerotation}
	Let $\V{w}_i,\V{w}_{j},\V{w}_{t}\in W$ span a triangle in $G$. If $\V{w}_i, \V{w}_{j}$ are an $h_i$- and $h_j$-rotation of $\V{w}_{t}$ respectively, then $\V{w}_i$ is an $(h_i-h_j)$-rotation of $\V{w}_{j}$ and $h_i\neq h_j$. 
	
	Consequently, $G[W]$ is $K_{ p+1}$-free. The same holds for $Z$.
\end{lemma}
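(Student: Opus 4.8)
The plan is to work with the structure imposed by \ref{it-insideW}: each edge of $G[W]$ comes labelled by a rotation index $h\in[p-1]$, and we want to show these labels behave additively along triangles. So suppose $\V{w}_i,\V{w}_j,\V{w}_t$ span a triangle, with $\V{w}_i$ an $h_i$-rotation of $\V{w}_t$ and $\V{w}_j$ an $h_j$-rotation of $\V{w}_t$; thus $|\V{w}_i-\rho^{h_i}\V{w}_t|\le\sqrt{\mu}$ and $|\V{w}_j-\rho^{h_j}\V{w}_t|\le\sqrt{\mu}$. Since $\V{w}\mapsto\rho\V{w}$ is an isometry of $\sS^{k-1}(\sC)$, applying the rotation by $\rho^{-h_j}$ and the triangle inequality gives
\begin{equation*}
	\bigabs{\rho^{-h_j}\V{w}_i-\rho^{h_i-h_j}\V{w}_t}\le\sqrt{\mu}
	\quad\text{and}\quad
	\bigabs{\rho^{-h_j}\V{w}_j-\V{w}_t}\le\sqrt{\mu},
\end{equation*}
whence $|\rho^{-h_j}\V{w}_i-\rho^{h_i-h_j}\rho^{h_j}\V{w}_j|\le$ (some small multiple of $\sqrt{\mu}$) after combining, i.e.\ $|\V{w}_i-\rho^{h_i-h_j}\V{w}_j|$ is small. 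The one subtlety is that $\sqrt{\mu}+\sqrt{\mu}=2\sqrt{\mu}$ is not obviously $\le\sqrt{\mu}$, so one cannot literally conclude the edge condition as stated; I would handle this by noting that the diameter $\mu/4$ of the domains $D_i$ together with the points $\V{w}_i,\V{z}_i$ being chosen inside $D_i$ means edges are really detected up to an additive slack, or — cleaner — by observing that $G[W]$ is isometric to a sub-configuration where one instead argues directly that the three points $\V{w}_i,\rho^{h_j}\V{w}_t\cdot(\text{stuff})$ are pairwise close and invokes that $h_i=h_j$ would force $|\V{w}_i-\rho^{h_i}\V{w}_t|$ and $|\V{w}_j-\rho^{h_i}\V{w}_t|$ both $\le\sqrt{\mu}$, making $\V{w}_i,\V{w}_j$ too close to be an edge at all. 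This last point gives $h_i\ne h_j$: if $h_i=h_j$ then $|\V{w}_i-\V{w}_j|\le 2\sqrt{\mu}$, but an edge of $G[W]$ requires $\V{w}_i$ to be an $h$-rotation of $\V{w}_j$ for some $h\in[p-1]$, i.e.\ $|\V{w}_i-\rho^h\V{w}_j|\le\sqrt{\mu}$ with $\rho^h\ne 1$, and two points at mutual distance $\le 2\sqrt{\mu}$ on the sphere cannot both be $\sqrt{\mu}$-close after a genuine nontrivial rotation — this needs $\sqrt{\mu}$ small relative to $1/p$, which holds by the hierarchy~\eqref{eq-hierarchy}. So the first main step is this additivity-of-labels claim, and the genuinely delicate part is tracking the constants so that ``small + small'' stays within whatever slack the construction actually tolerates; I expect the paper absorbs this into the choice $\mu=\eps/\sqrt{2k}$ and the domain diameter $\mu/4$.

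Granting the claim, the $K_{p+1}$-freeness follows by a counting/pigeonhole argument on the labels. Suppose for contradiction that $\V{w}_{0},\V{w}_{1},\ldots,\V{w}_{p}$ span a $K_{p+1}$ in $G[W]$. Fix $\V{w}_0$ as a base vertex: for each $j\in[p]$, the edge $\V{w}_0\V{w}_j$ gives a label $h_j\in[p-1]$ with $\V{w}_j$ an $h_j$-rotation of $\V{w}_0$ (orienting appropriately). That is $p$ labels taking values in the $(p-1)$-element set $[p-1]$, so by pigeonhole two of them coincide, say $h_a=h_b$ with $a\ne b$. But $\V{w}_a\V{w}_b$ is also an edge, so by the additivity claim $\V{w}_a$ is an $(h_a-h_b)$-rotation of $\V{w}_b$ with $h_a\ne h_b$ — contradicting $h_a=h_b$. (One must be slightly careful about orientation of labels, since ``$\V{w}$ is an $h$-rotation of $\V{w}'$'' is not symmetric as stated; but if $\V{w}_j$ is an $h_j$-rotation of $\V{w}_0$ then $\V{w}_0$ is a $(p-h_j)$-rotation of $\V{w}_j$, up to the same small slack, so one can consistently choose a direction, e.g.\ always take labels ``from $\V{w}_0$'', and the claim applied with base $\V{w}_0$ still yields $h_a\ne h_b$ from the edge $\V{w}_a\V{w}_b$.) The argument for $Z$ is identical since the construction of $E(G[Z])$ is symmetric to $E(G[W])$.

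The main obstacle, as indicated, is purely quantitative bookkeeping: verifying that the triangle-inequality manipulations which prove label-additivity stay inside the tolerance built into the edge definition, and that a nontrivial rotation $\rho^h$ (with $h\in[p-1]$) moves points by enough that $|\V{w}_i-\V{w}_j|\le 2\sqrt\mu$ is incompatible with $|\V{w}_i-\rho^h\V{w}_j|\le\sqrt\mu$. Both are controlled by the hierarchy $1/k\ll\eps\ll 1/K\ll 1/p$ together with $\mu=\eps/\sqrt{2k}$ and the domain diameter $\le\mu/4$, so I would state the two geometric sub-lemmas cleanly with explicit (but loose) constants and then let the hierarchy do the work, rather than optimising anything. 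Everything else — the pigeonhole and the reduction of the $Z$ case — is immediate.
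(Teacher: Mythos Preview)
Your pigeonhole argument for $K_{p+1}$-freeness is exactly the paper's. The gap you yourself flag in the first paragraph, however, is real, and your proposed workarounds do not close it. From the two hypotheses $|\V{w}_i-\rho^{h_i}\V{w}_t|\le\sqrt{\mu}$ and $|\V{w}_j-\rho^{h_j}\V{w}_t|\le\sqrt{\mu}$ alone, the triangle inequality honestly only gives $|\V{w}_i-\rho^{h_i-h_j}\V{w}_j|\le 2\sqrt{\mu}$, and the edge definition in~\ref{it-insideW} is sharp at $\sqrt{\mu}$, not at $c\sqrt{\mu}$ for a flexible constant. The domain diameter $\mu/4$ is of lower order than $\sqrt{\mu}$ and cannot absorb this factor of~$2$; there is no hidden slack in the construction to appeal to.

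The fix is the idea you nearly reach in your $h_i\ne h_j$ discussion but do not apply to the main claim: use the \emph{third} edge. Since $\V{w}_i\V{w}_j\in E(G[W])$, definition~\ref{it-insideW} already hands you some $h\in[p-1]$ with $|\V{w}_i-\rho^h\V{w}_j|\le\sqrt{\mu}$; your task is not to produce this inequality from scratch but to identify $h$. Combining all three edge bounds,
\[
|1-\rho^{h_i-h_j-h}| = |\rho^h\V{w}_j-\rho^{h_i-h_j}\V{w}_j|
\le |\V{w}_i-\rho^h\V{w}_j|+|\V{w}_i-\rho^{h_i}\V{w}_t|+|\rho^{h_i}\V{w}_t-\rho^{h_i-h_j}\V{w}_j|
\le 3\sqrt{\mu}.
\]
Since $|1-\rho^m|\ge 4/p$ whenever $m\not\equiv 0\pmod p$ and $3\sqrt{\mu}<4/p$ by the hierarchy~\eqref{eq-hierarchy}, this forces $h\equiv h_i-h_j\pmod p$; and as $h\in[p-1]$ is nonzero, it simultaneously yields $h_i\ne h_j$. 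So once you argue in this direction the quantitative bookkeeping evaporates: you never need to manufacture a $\le\sqrt{\mu}$ bound, only to recognise which of the finitely many labels the existing edge carries.
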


\begin{proof}
	For any $m \in [p-1]$, we have
	\begin{equation}\label{eq-largenorm}
		|1-\rho^m|^2 = 2-2\cos \left(2\pi m/p\right) \geq 2-2\cos\left(2\pi/p\right) = 4\sin^2 \left(\pi/p\right) \geq \left(4/p\right)^2,
	\end{equation}
	as $\sin x\ge \frac{2x}{\pi}$ for all $x\in[0,\frac{\pi}{2}]$ by concavity.
	By~\ref{it-insideW}, there is $h\in [p-1]$ such that $\V{w}_i$ is an $h$-rotation of $\V{w}_j$. Recall that vertices of $G$, viewed as points in $\sS^{k-1}(\sC)$, all have modulus 1.
	So
	\begin{align*}
		|1-\rho^{h_i-h_j-h}| &= |\rho^h\V{w}_j-\rho^{h_i-h_j}\V{w}_j| \leq |\V{w}_i-\rho^h\V{w}_j| + |\V{w}_i-\rho^{h_i}\V{w}_t| + |\rho^{h_i}\V{w}_t-\rho^{h_i-h_j}\V{w}_j|\\
		&= |\V{w}_i-\rho^h\V{w}_j| + |\V{w}_i-\rho^{h_i}\V{w}_t| + |\V{w}_j-\rho^{h_j}\V{w}_t| \leq 3\sqrt{\mu},
	\end{align*}
	which together with~\eqref{eq-hierarchy} and~\eqref{eq-largenorm} implies $h=h_i-h_j$. Since $h\neq 0$, we have that $h_i \neq h_j$.
	
	Suppose that $\V{w}_0,\V{w}_1,\ldots,\V{w}_{ p}$ span a clique in $G[W]$. Then by~\ref{it-insideW} there are $h_1,\ldots,h_ p \in [p-1]$ such that $\V{w}_i$ is an $h_i$-rotation of $\V{w}_0$ for all $i \in [ p]$.
	By the Pigeonhole Principle, there are distinct $i,j \in [ p]$ such that $h_i=h_j$, contradicting the first part.
\end{proof}

\begin{lemma}\label{lem-independenceGBEoneside}
	Every set $X\subseteq W$ with $|X|\ge pe^{-\mu k/40}\cdot|W|$ contains a copy of $K_p$. In particular, $$\alpha_p(G)\le 2pe^{-\mu k/40}n.$$
\end{lemma}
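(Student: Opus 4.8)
The plan is to upper-bound the measure needed to force a $K_p$ inside $W$ via a union bound over rotation classes, reducing to the following isoperimetric fact: any subset of the sphere that avoids containing both a point and an $h$-rotation of another point (for every $h$) must have small measure.

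\textbf{Setup.} Suppose $X\subseteq W$ has no copy of $K_p$; I want to bound $|X|$. Pull $X$ back to the sphere: identify $X$ with the set $\{i : \V{w}_i\in X\}$ and consider the union $\widehat X:=\bigcup_{i\in X} D_i\subseteq \sS^{k-1}(\sC)$ of the corresponding domains, so $\lambda(\widehat X)=|X|/n$ since the $D_i$ have equal measure. Because each $D_i$ has diameter at most $\mu/4$, membership in $\widehat X$ is, up to an error of $\mu/4$ in distances, the same as membership in $X$; I will absorb this slack into the constants.

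\textbf{Key step: a measure bound for rotation-free sets.} The crucial claim is: if $Y\subseteq \sS^{k-1}(\sC)$ has measure $\lambda(Y)\ge e^{-\mu k/40}$, then there exist $\V{y},\V{y}'\in Y$ and $h\in[p-1]$ with $|\V{y}-\rho^h\V{y}'|$ small (essentially $\le\sqrt\mu$, after accounting for the domain diameters). To see this, apply the two-set version of the spherical cap bound, Lemma~\ref{lem:cap-2set}, to the pair $(Y, \rho^{-h}Y)$ for a suitable $h$: since rotation by $\rho^{-h}$ preserves $\lambda$, both sets have measure $>e^{-k\nu/2}$ once $\nu$ is chosen to match $\mu/40$-type exponents, so $d_{\max}(Y,\rho^{-h}Y)\ge 2-\nu$. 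Wait — that gives two points that are \emph{far} apart, whereas I want two points that are \emph{close}. The right move is instead to note that a rotation-free $X$ cannot be too large because the $p$ rotated copies $X, \rho X,\dots,\rho^{p-1}X$ would otherwise be forced (by a pigeonhole/covering argument on the sphere) to have a near-coincidence: formally, if every point of the sphere lies within $\sqrt\mu/2$ of at most... — cleaner: I will show that $\widehat X$ together with its rotations $\rho\widehat X,\ldots,\rho^{p-1}\widehat X$ cannot cover the sphere with bounded multiplicity unless $\lambda(\widehat X)$ is tiny, using Lemma~\ref{lem:cap-UB}: a spherical cap of height $1-\alpha$ has measure $\le e^{-k\alpha^2}$, and if $\lambda(\widehat X)>e^{-\mu k/40}$ then $\widehat X$ has diameter $>2-\mu/20$ by Lemma~\ref{lem:cap-2set}, hence contains two nearly antipodal points; rotating, one finds within the $p$ copies two points at distance $\le\sqrt\mu$ (a short trigonometric computation on $|\V{y}-\rho^h\V{y}'|$ for nearly antipodal $\V{y},\V{y}'$, using $p\ge 2$). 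This is exactly the content needed for an edge in $G[W]$ via~\ref{it-insideW}.

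\textbf{Iteration to get $K_p$.} Having one edge is not enough; I need a clique of size $p$. Here I iterate: by Lemma~\ref{obs-samerotation}, the rotation labels along any clique in $G[W]$ are distinct and additive, so a $K_p$ corresponds to $p$ points $\V{w}_{i_0},\dots,\V{w}_{i_{p-1}}$ with $\V{w}_{i_s}$ an $h_s$-rotation of $\V{w}_{i_0}$ and $\{h_1,\dots,h_{p-1}\}=[p-1]$. Equivalently, I seek a point $\V y$ such that $\V y, \rho^{-1}\V y,\dots,\rho^{-(p-1)}\V y$ are all (approximately) represented in $\widehat X$. Using the rotation-free bound above as the base case and peeling off one rotation class at a time: if $|X|\ge pe^{-\mu k/40}n$, then by averaging some rotate $\rho^{-h}\widehat X$ meets $\widehat X$ in measure $\ge \lambda(\widehat X)/p\cdot$(something) — more precisely, I partition $\widehat X$ according to which of the $p$ possible rotation offsets first produces a near-collision, and a clean pigeonhole shows that if no offset is ever "missing" down the chain then after $p$ steps we have assembled a full set of distinct rotation labels, i.e.\ a $K_p$; the measure lost at each of the $\le p$ steps is a factor controlled by Lemma~\ref{lem:cap-2set}, and the threshold $e^{-\mu k/40}$ with the leading factor $p$ is exactly what makes the bookkeeping close. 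Then $\alpha_p(G)$: any $p$-independent set $U$ in $G$ has $U\cap W$ and $U\cap Z$ each $K_p$-free, so each has size $<pe^{-\mu k/40}n$, giving $\alpha_p(G)<2pe^{-\mu k/40}n$ as claimed (and this is $o(n)$ by the hierarchy~\eqref{eq-hierarchy}, since $1/k\ll\mu$... rather $\mu k=\eps\sqrt{k/2}\to\infty$).

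\textbf{Main obstacle.} The delicate point is the passage from "$\widehat X$ is large" to "$\widehat X$ contains a full rotation-chain of length $p$" with only a multiplicative loss of $p$ in the measure threshold (not $p$-fold iteration of an exponential loss). I expect this requires organizing the argument so that Lemma~\ref{lem:cap-2set} is applied \emph{once} to a cleverly chosen pair whose largeness already encodes the whole chain — e.g.\ to $\widehat X$ against $\bigcup_{h}\rho^{-h}(\text{complement-of-partial-chain})$ — rather than naively peeling. Handling the $\mu/4$ domain-diameter slack uniformly through this, and making sure the distance estimate $|\V y-\rho^h\V y'|\le\sqrt\mu$ genuinely follows from near-antipodality plus $p\ge 2$ (where the worst case $p=2$ gives $\rho^h=-1$ and the two points nearly coincide after rotation), is the technical heart.
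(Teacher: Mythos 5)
Your high-level architecture is right — reduce to a measure statement on the sphere via the domains, find $p$ points whose rotated images nearly coincide, transfer back to nearby vertices of $X$ to get a $K_p$, and then sum the $W$- and $Z$-sides for the $\alpha_p$ bound — but the heart of the argument, which is the paper's Lemma~\ref{lem-closebyrotations}, is not actually established, and the route you sketch toward it has a conceptual error.

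The error is in your ``key step''. You propose to get an edge by observing that a set $\widehat X$ of large measure has diameter close to $2$ (via Lemma~\ref{lem:cap-2set}), hence contains two \emph{nearly antipodal} points $\V y,\V y'\approx-\V y$, and then claim that some rotation $\rho^h\V y'$ lies near $\V y$. But $\V y\approx\rho^h\V y'\approx-\rho^h\V y$ forces $\rho^h\approx-1$, which is only possible when $p$ is even and $h=p/2$; for $p=3$ (the very case motivating the paper) no $\rho^h$ with $h\in[p-1]$ is close to $-1$, so near-antipodal points of $\widehat X$ do \emph{not} yield an edge. The correct move is to compare $\widehat X$ not with itself but with the reflected-and-rotated copies $-\rho^h\widehat X$: if $\lambda(\widehat X)$ is large, then by Lemma~\ref{lem:cap-2set} one finds $\V a,\V a'\in\widehat X$ with $|\V a-(-\rho^h\V a')|=|\V a+\rho^h\V a'|\ge 2-\nu/16$, and then the \emph{parallelogram law} $|\V a-\rho^h\V a'|^2=4-|\V a+\rho^h\V a'|^2$ converts near-antipodality between $\widehat X$ and $-\rho^h\widehat X$ into near-coincidence of $\V a$ and $\rho^h\V a'$. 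You never invoke the parallelogram law, and without it the ``short trigonometric computation'' you allude to does not exist for odd $p$.

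The second gap — which you flag yourself as the ``main obstacle'' — is the upgrade from one edge to a full $K_p$. Your proposed peeling/pigeonhole iteration is not carried out, and would be awkward: naively iterating Lemma~\ref{lem:cap-2set} would lose an exponential factor at each of $p$ steps. The paper avoids this entirely by a union bound on bad sets: for each $h\in[p-1]$ define $A_h$ to be the set of $\V a\in A$ for which \emph{no} $\V a'\in A$ has $|\V a+\rho^h\V a'|\ge 2-\nu/16$; each $A_h$ has measure at most $e^{-\nu k/32}$ (else $A_h$ and $-\rho^h A$ contradict Lemma~\ref{lem:cap-2set}), so the threshold $\lambda(A)>pe^{-\nu k/32}$ guarantees a single pivot $\V a_0\notin A_1\cup\cdots\cup A_{p-1}$, which then hands you a partner $\V a_h$ for \emph{every} $h$ simultaneously. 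This additive (union-bound) structure is exactly the ``apply the isoperimetric lemma once to a cleverly chosen object'' you were hoping for but did not find; it is also where the factor $p$ in the threshold comes from, and it produces the full chain $\V a_0,\dots,\V a_{p-1}$ in one stroke. With that lemma in hand, the remainder of your write-up (snapping $\V a_h$ to the vertex $\V x_h\in X$ in the same domain, checking pairwise distances $\le\sqrt\mu$ via the triangle inequality and the hierarchy, and doubling for $W$ and $Z$) matches the paper's proof.
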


For its proof, we need the following consequence of concentration of measure.

\begin{lemma}\label{lem-closebyrotations}
	Let $p\ge 2$ be an integer, $\nu\le \min\{\frac{16}{p^2},1\}$ and $A\subseteq \sS^{k-1}(\sC)$ with $\lambda(A)\ge pe^{-\nu k/32}$. Then there are distinct points $\V{a}_0,\ldots,\V{a}_{p-1}\in A$ such that, for all $h,m\in \sZ_p$,
	$$\bigabs{\rho^h\V{a}_h-\rho^m\V{a}_m}<\sqrt{\nu}.$$
\end{lemma}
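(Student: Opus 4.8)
The plan is to apply concentration of measure to the $p$ "rotated copies" of $A$ simultaneously. For each $h\in\sZ_p$ set $A_h := \rho^{-h} A = \{\rho^{-h}\V{a} : \V{a}\in A\}$; since $\V{a}\mapsto \rho^{-h}\V{a}$ is an isometry of $\sS^{k-1}(\sC)$, each $A_h$ has measure $\lambda(A_h)=\lambda(A) > pe^{-\nu k/32}$. Observe that $\V{a}_0,\ldots,\V{a}_{p-1}\in A$ satisfy $|\rho^h\V{a}_h - \rho^m\V{a}_m| < \sqrt{\nu}$ for all $h,m$ precisely when the point $\V{x}:=\rho^h\V{a}_h$ (which is the same for all $h$, up to error $\sqrt\nu$) lies within distance roughly $\sqrt\nu/2$ of every $\rho^h A_h = A$ — in other words, it suffices to find a single point $\V{x}\in\sS^{k-1}(\sC)$ whose distance to each of the sets $\rho^h A_h$, $h\in\sZ_p$, is small, and then pull back. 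Concretely: if some point $\V{x}$ has $d(\V{x}, A) < \sqrt{\nu}/2$ for... — more carefully, I want $\V{x}$ with $d(\V{x},\rho^h A_h) = d(\rho^{-h}\V{x}, A_h)$; hmm, let me instead directly seek $\V{a}_h\in A$ with all $\rho^h\V{a}_h$ mutually close.

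The clean route is the standard "blow-up / concentration" argument used for Lemma~\ref{lem-independenceGBEoneside}-type statements. Consider the set $B := \{\V{x}\in\sS^{k-1}(\sC) : d(\V{x}, A) \le \sqrt\nu/2\}$, the $(\sqrt\nu/2)$-neighbourhood of $A$. By the spherical isoperimetric inequality (as quoted before Lemma~\ref{lem:cap-2set}), $\lambda(B)$ is at least the measure of the $(\sqrt\nu/2)$-neighbourhood of a spherical cap of the same measure as $A$; since $\lambda(A) > pe^{-\nu k/32} \ge e^{-\nu k/32}$ and using the cap estimate of Lemma~\ref{lem:cap-UB} together with the relation between a cap's height and its complement's measure, one deduces $\lambda(B) > 1 - \tfrac1p$ (this is exactly the kind of bound where the constant $32$ and the hypothesis $\nu\le 16/p^2$ are calibrated; the point is that enlarging a set of measure $> e^{-\nu k/32}$ by $\sqrt\nu/2$ pushes its measure past $1-1/p$, because the complementary cap shrinks below $1/p$). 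Now apply the rotation: the sets $\rho^{-h} B$ for $h\in\sZ_p$ each have measure $> 1-1/p$, so by the union bound $\bigcap_{h\in\sZ_p}\rho^{-h}B \ne \emptyset$ — in fact it has positive measure. Pick $\V{x}$ in this intersection. Then for each $h$, $\rho^h\V{x}\in B$, so there is $\V{a}_h\in A$ with $|\rho^h\V{x} - \V{a}_h| \le \sqrt\nu/2$, i.e.\ $|\V{x} - \rho^{-h}\V{a}_h|\le\sqrt\nu/2$. Hence for all $h,m$, $|\rho^{-h}\V{a}_h - \rho^{-m}\V{a}_m| \le \sqrt\nu$; replacing $\rho$ by $\bar\rho = \rho^{-1}$ throughout (or re-indexing $h\mapsto -h$, which is a bijection of $\sZ_p$) gives $|\rho^h\V{a}_h - \rho^m\V{a}_m| \le \sqrt\nu$, and a negligible perturbation of the neighbourhood radius makes the inequality strict.

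Finally, the points $\V{a}_0,\ldots,\V{a}_{p-1}$ can be taken distinct: if $\V{a}_h = \V{a}_m$ for some $h\ne m$ then $|\rho^h\V{a}_h - \rho^m\V{a}_m| = |\rho^h - \rho^m| = |1-\rho^{m-h}| \ge 4/p$ by the computation in~\eqref{eq-largenorm}, which contradicts $|\rho^h\V{a}_h-\rho^m\V{a}_m|<\sqrt\nu$ since $\nu\le 16/p^2$ forces $\sqrt\nu\le 4/p$. (A strict inequality is needed here, which is why we arranged the bound to be strict.) Distinctness for all $p$ points follows since any coincidence among them is ruled out this way.

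\textbf{Main obstacle.} The delicate point is the quantitative isoperimetric step: verifying that a set of measure $> pe^{-\nu k/32}$, when enlarged by $\sqrt\nu/2$, has measure $> 1-1/p$. This requires combining Schmidt's isoperimetric inequality (neighbourhoods of caps are extremal) with the cap-measure bound Lemma~\ref{lem:cap-UB} applied to the \emph{complement} of the blown-up set: one shows the complement is contained in a cap whose height $1-\alpha$ satisfies $\alpha \gtrsim \nu/\text{const}$, so its measure is $\le e^{-k\alpha^2} < 1/p$ given $\lambda(A) > pe^{-\nu k/32}$. Chasing the exact constants — reconciling the radius $\sqrt\nu/2$, the height loss, the exponent $32$, and the hypothesis $\nu \le 16/p^2$ — is the part that needs care; everything else is a short union-bound-and-triangle-inequality argument.
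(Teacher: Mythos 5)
Your proposal is correct in outline and reaches the same conclusion, but it is a genuinely different argument from the one in the paper, and the two routes have different cost profiles.

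The paper proceeds by defining, for each $h\in[p-1]$, the ``bad'' set $A_h$ of points $\V a\in A$ that have no partner $\V a'\in A$ with $|\V a+\rho^h\V a'|\ge 2-\nu/16$. By Lemma~\ref{lem:cap-2set} applied to $A_h$ and $-\rho^h A$ (with $\nu'=\nu/16$), each $A_h$ has measure $\le e^{-\nu k/32}$; a union bound over the $p-1$ bad sets, against $\lambda(A)>pe^{-\nu k/32}$, produces a good anchor $\V a_0$. Its partners $\V a_h$ then satisfy $|\V a_0-\rho^h\V a_h|<\sqrt\nu/2$ by the parallelogram law, and the triangle inequality finishes. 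The key advantage is that Lemma~\ref{lem:cap-2set} is the \emph{$d_{\max}$ form} of isoperimetry, already calibrated, so no fresh expansion estimate is needed, and the parallelogram law converts $|\V a_0+\rho^h\V a_h|\ge 2-\nu/16$ into the wanted bound with no loss of constants (and with the strict inequality built in).

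Your route instead uses the \emph{neighbourhood-expansion form} of Schmidt's isoperimetric inequality: enlarge $A$ to $B=\{\V x:d(\V x,A)\le\sqrt\nu/2\}$, argue that $\lambda(B)>1-1/p$ (comparing to the cap of measure $\lambda(A)$ and bounding the complementary cap via Lemma~\ref{lem:cap-UB}), then intersect the $p$ rotations $\rho^{-h}B$ by a union bound and pull back. This works, and the constants do go through: $\lambda(A)>pe^{-\nu k/32}$ forces the cap of measure $\lambda(A)$ to have $\alpha\le\sqrt{\nu/32}$, the $\sqrt\nu/2$-expansion pushes the bounding hyperplane past $0$ by at least roughly $\tfrac{\sqrt 3}{4}\sqrt\nu-\tfrac{1}{4\sqrt2}\sqrt\nu\approx 0.26\sqrt\nu$, and since $\lambda(A)\le 1$ forces $\nu k>32\ln p$, Lemma~\ref{lem:cap-UB} gives $\lambda(B^c)\le e^{-0.065\nu k}<1/p$. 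You correctly flag this constant-chasing as the delicate step; note also that the form of isoperimetry quoted just before Lemma~\ref{lem:cap-2set} in the paper is the $d_{\max}$/diameter form, so the expansion form you invoke is an equivalent but not literally stated ingredient. You also correctly handle the cosmetic issues (re-indexing $h\mapsto -h$ to swap $\rho^{-h}$ for $\rho^h$, shrinking the expansion radius slightly to turn $\le$ into $<$, and distinctness via \eqref{eq-largenorm} using $\nu\le 16/p^2$). In short: both are concentration arguments, yours via expansion plus a global pivot $\V x$, the paper's via anti-concentration of ``badness'' plus a local pivot $\V a_0$; the paper's is shorter precisely because the parallelogram law sidesteps the quantitative expansion estimate that is the sole nontrivial step in yours.
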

\begin{proof}
	For all $h\in [p-1]$, define 
	$$
	A_h:=\left\{\V{a}\in A:~\forall \V{a}'\in A,~\bigabs{\V{a}+\rho^h\V{a}'}< 2-\nu/16 \right\}.
	$$
	Note that $\lambda(A_h)\le e^{-\nu k/32}$ as otherwise, the sets $A_h$ and $-\rho^hA$ violate Lemma~\ref{lem:cap-2set}. Therefore, $A\neq A_1\cup\cdots\cup A_{p-1}$, and we can pick a point $\V{a}_0\in A\setminus(A_1\cup\cdots\cup A_{p-1})$. 
	
	For all $h\in [p-1]$, since $\V{a}_0\notin A_h$, there exists a point $\V{a}_h\in A$
	with  $|\V{a}_0+\rho^h\V{a}_h|\ge 2-\nu/16$. We claim that $|\V{a}_0-\rho^h\V{a}_h|<\sqrt{\nu}/2$, for all $h\in \sZ_p$. The inequality is trivial for $h=0$. For $h\in [p-1]$, it follows from the parallelogram law:
	\begin{equation*}
		|\V{a}_0-\rho^h\V{a}_h|^2=  2(|\V{a}_0|^2+|\rho^h\V{a}_h|^2)-|\V{a}_0+\rho^h\V{a}_h|^2\le 4-(2-\nu/16)^2<\nu/4.
	\end{equation*}
	Thus, for all $h,m\in \sZ_p$, by the triangle inequality, we obtain
	\begin{equation}\label{eq-smallrotationdistance}
		|\rho^h\V{a}_h-\rho^m\V{a}_m|\le|\rho^h\V{a}_h-\V{a}_0|+|\V{a}_0-\rho^m\V{a}_m|<\sqrt{\nu}.
	\end{equation}

	We are left to show that all points $\V{a}_0,\ldots, \V{a}_{p-1}$ are distinct. Suppose to the contrary that for some distinct $h,m\in \sZ_p$, $\V{a}_h=\V{a}_m$. Then, as in~\eqref{eq-largenorm},
	\begin{equation*}
		|\rho^h\V{a}_h-\rho^m\V{a}_m|=|\rho^h-\rho^m|=|\rho^{h-m}-1|\ge  4/p\ge \sqrt{\nu},
	\end{equation*}   
	a contradiction to~\eqref{eq-smallrotationdistance}.
\end{proof}

We are now ready to prove Lemma~\ref{lem-independenceGBEoneside}.
\begin{proof}[Proof of Lemma~\ref{lem-independenceGBEoneside}]
Let $X\subseteq W$ with $|X|\ge pe^{-\mu k/40}\cdot|W|$
	and let $A:=\bigcup\{D_i:\V{w}_i\in X\}$; then $\lambda(A)=|X|/|W|\ge pe^{-\mu k/40}$. By Lemma~\ref{lem-closebyrotations}, there exist distinct $\V{a}_0,\ldots,\V{a}_{p-1}\in A$ such that $|\rho^h\V{a}_h-\rho^m\V{a}_m|\le \sqrt{4\mu/5}$ for all $h,m\in \sZ_p$. For each $h\in \sZ_p$, let $\V{x}_h\in X$ be the vertex lying in the same domain $D_i$ as $\V{a}_h$, i.e. $|\V{x}_h-\V{a}_h|\le \mu/4$. 
	
	Now, for distinct integers $h,m\in \sZ_p$, by triangle inequality and~\eqref{eq-hierarchy}, we get
	\begin{equation*}
		|\V{x}_h-\rho^{m-h}\V{x}_m|=|\rho^h\V{x}_h-\rho^m\V{x}_m|\le |\rho^h\V{x}_h-\rho^h\V{a}_h|+|\rho^h\V{a}_h-\rho^m\V{a}_m|+|\rho^m\V{a}_m-\rho^m\V{x}_m|\le\sqrt{\mu}.
	\end{equation*}
	Thus $\V{x}_h\V{x}_m\in E(G[W])$ and $\V{x}_0,\ldots,\V{x}_{p-1}$ induce a copy of $K_p$, finishing the proof.
\end{proof}

The fact that the inner graphs $G[W],G[Z]$ have zero edge density follows already from their being $K_{p+1}$-free and having sublinear $p$-independence number, as then their maximum degree is at most $\alpha_p(G[W])=e^{-\Theta(\sqrt{k})}n$ by Lemma~\ref{lem-independenceGBEoneside}. We can in fact give a tighter bound via a direct estimation. 
\begin{lemma}\label{lem-maxinsidedeg}
	The maximum degree of $G[W], G[Z]$ is at most $pe^{-k(1-\mu)^2}n\le e^{-k/2}n$. 
\end{lemma}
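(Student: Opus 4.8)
The plan is to bound the degree of a fixed vertex $\V{w}_i \in W$ in $G[W]$ by showing that each of its neighbours, via one of the $p-1$ possible rotations, must lie in a thin spherical cap of tiny measure, and then sum over the domains. Fix $\V{w}_i$ and $h\in[p-1]$; by~\ref{it-insideW}, any neighbour $\V{w}_j$ of $\V{w}_i$ that is an $h$-rotation satisfies $|\V{w}_i - \rho^h\V{w}_j|\le \sqrt{\mu}$, i.e.\ $|\rho^{-h}\V{w}_i - \V{w}_j|\le\sqrt{\mu}$ (multiplying by the unit-modulus scalar $\rho^{-h}$ preserves distances). Since $\V{w}_j$ lies in the domain $D_j$ of diameter at most $\mu/4$, the whole domain $D_j$ is contained in the closed ball of radius $\sqrt{\mu}+\mu/4 \le 2\sqrt{\mu}$ around the fixed point $\rho^{-h}\V{w}_i\in\sS^{k-1}(\sC)$, say for all sufficiently small $\mu$. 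The intersection of $\sS^{k-1}(\sC)$ with that ball is contained in a spherical cap; a point at Euclidean distance at most $2\sqrt{\mu}$ from a sphere point corresponds to a cap of height $1-\alpha$ with $2\sqrt{1-\alpha^2} \ge$ (diameter), equivalently $\alpha \ge 1-2\mu$ after the elementary computation $|\V{x}-\V{y}|^2 = 2-2\reinnerp{\V{x},\V{y}}$ on unit vectors. So by Lemma~\ref{lem:cap-UB} the total measure of all such domains $D_j$ is at most $e^{-k(1-2\mu)^2}$; one can tighten the constant slightly to reach $e^{-k(1-\mu)^2}$ by being more careful about whether the $\mu/4$ slack is really needed, or simply absorb it since $(1-2\mu)^2 \ge (1-\mu)^2 - O(\mu)$ and $\mu = \eps/\sqrt{2k}$ makes $k\mu = O(\sqrt{k})$ negligible against the exponent.

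Converting measure back to a vertex count: since the domains $D_1,\dots,D_n$ partition the sphere into equal pieces of measure $1/n$, the number of indices $j$ for which $\V{w}_j$ is an $h$-rotation of $\V{w}_i$ and $\V{w}_i\V{w}_j\in E(G[W])$ is at most $n\cdot e^{-k(1-\mu)^2}$. Summing over the $p-1 < p$ choices of $h$ gives $\deg_{G[W]}(\V{w}_i) \le p\,e^{-k(1-\mu)^2}\,n$. Finally, by the hierarchy in~\eqref{eq-hierarchy} we have $1/k \ll \eps \ll 1/p$, so $\mu = \eps/\sqrt{2k}$ is tiny, $k(1-\mu)^2 \ge k/2 + \log p$ for $k$ large, hence $p\,e^{-k(1-\mu)^2}n \le e^{-k/2}n$. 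The argument for $G[Z]$ is identical with $Z$ in place of $W$.

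I expect the only mildly delicate point to be the bookkeeping of constants in the exponent — ensuring the slack coming from the domain diameter $\mu/4$ and the conversion between "distance from a point" and "cap height" genuinely fits inside $e^{-k(1-\mu)^2}$ rather than some weaker bound like $e^{-k(1-3\mu)^2}$. This is not a real obstacle: since $k\mu \to \infty$ only polynomially slower than $k$ (indeed $k\mu = \eps\sqrt{k/2}$), any bound of the form $e^{-k(1-O(\mu))^2}n$ already beats $e^{-k/2}n$ comfortably, and one can either state the cleaner $e^{-k/2}n$ directly or keep $e^{-k(1-\mu)^2}n$ by choosing the ball radius to be exactly $\sqrt{\mu}$ plus a lower-order term and noting $D_j$ lies within it. No new tools beyond Lemma~\ref{lem:cap-UB} and the equal-measure partition are needed.
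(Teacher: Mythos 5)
Your proposal is correct and follows the same approach as the paper: for each of the $p-1$ rotations $h$, the candidate neighbours of $\V{w}_i$ lie in the domains touching a tiny spherical cap around $\rho^{-h}\V{w}_i$, whose measure is controlled by Lemma~\ref{lem:cap-UB}, and then you sum over $h$. The paper takes the cap radius to be exactly $\sqrt{\mu}+\mu/4$ (so the cap height is $d^2/2 < \mu$, giving $\alpha > 1-\mu$ directly), whereas you round up to $2\sqrt{\mu}$ and obtain the marginally weaker $e^{-k(1-2\mu)^2}$, but as you observe this gap is immaterial and both land comfortably below $e^{-k/2}n$.
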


\begin{proof}
	By the construction of $G$, in particular~\ref{it-insideW} and that each domain $D_i$ has diameter at most $\mu/4$, we see that in $G[W]$ every vertex has degree $n(p-1)$ times the measure of a spherical cap whose points are within distance $d=\sqrt{\mu}\pm \mu/4$ from its centre. As the height of such a cap is precisely $d^2/2$, the conclusion follows from Lemma~\ref{lem:cap-UB}.
\end{proof}

\subsection{Angle decides cross density}
In this subsection, we verify~\ref{CBE:density}.
\begin{lemma}\label{lem-mindegree}
	Every vertex in $W$ has $(\frac{\ell}{p}\pm \frac{1}{\sqrt{K}}) n$  neighbours in $Z$, and vice versa.
\end{lemma}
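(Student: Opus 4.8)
The plan is to fix a vertex $\V{w}=\V{w}_i\in W$ and count the $\V{z}_j\in Z$ satisfying the two conditions in~\ref{it-between}. Since the domains $D_j$ all have the same measure $1/n$ and tiny diameter $\mu/4$, and since $\V{z}_j$ lies in $D_j$, it suffices up to an $o(n)$ error to compute the Lebesgue measure of the set
\[
N(\V{w}) := \left\{\V{z}\in\sS^{k-1}(\sC):~\forall h\in\sZ_p~\bigabs{\im{\rho^h\innerp{\V{w},\V{z}}}}\ge K\mu,~\text{and}~\exists\,\alpha\in[0,\tfrac{2\pi\ell}{p}]~\text{with}~e^{-i\alpha}\innerp{\V{w},\V{z}}\in[0,1]\right\},
\]
and then transfer this measure back to a vertex count, absorbing the discrepancy caused by points near the boundary of $N(\V{w})$ into the $\frac{1}{\sqrt K}n$ error term (the boundary is a finite union of smooth hypersurfaces, so the $\mu/4$-neighbourhood of it has measure $o(1/\sqrt K)$ once $k$ is large; alternatively one bounds this crudely by the measure of the ``bad'' annuli, which is small by the cap estimates). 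By the isometry $\varphi$ we may work on the real sphere, but it is cleaner to keep complex notation.

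The key step is to understand the distribution of the complex number $u:=\innerp{\V{w},\V{z}}$ when $\V{z}$ is uniform on $\sS^{k-1}(\sC)$ and $\V{w}$ is a fixed unit vector. Writing $\V{z}$ in an orthonormal basis whose first vector is $\V{w}$, we have $u=z_1$, the first coordinate; so $u$ is distributed as the first coordinate of a uniform point on the complex unit sphere. The condition $e^{-i\alpha}u\in[0,1]$ for some $\alpha\in[0,\frac{2\pi\ell}{p}]$ says exactly that the argument $\arg(u)$ lies in the arc $[0,\frac{2\pi\ell}{p}]$ (modulo the measure-zero event $u=0$), and by rotational symmetry of the distribution of $u$ under $u\mapsto e^{i\theta}u$, the argument $\arg(u)$ is uniform on $[0,2\pi)$ and independent of $|u|$. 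Hence condition~(ii) alone is satisfied with probability exactly $\frac{\ell}{p}$ (the arc has length $\frac{2\pi\ell}{p}$ out of $2\pi$).

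It remains to show condition~(i) costs only $\frac{1}{\sqrt K}$ in probability. Condition~(i) fails if $\bigabs{\im{\rho^h u}}< K\mu$ for some $h\in\sZ_p$, i.e. if $\arg(u)$ is within $O(K\mu/|u|)$ of one of the $2p$ lines through the origin at angles $\frac{\pi m}{p}$. Conditioned on $|u|\ge K\mu\sqrt{k}$ say, this is a union of $2p$ arcs each of angular width $O(K\mu/|u|)$, with total probability $O(p\cdot\frac{K\mu}{|u|})$; and the event $|u|< K\mu\sqrt k=\eps\sqrt{k/(2k)}\cdot K= O(\eps K)$ has small probability because the first coordinate of a uniform point on $\sS^{k-1}(\sC)$ has $|u|^2$ of order $1/k$ in expectation but is very unlikely to be as small as a constant — in fact $\pr[|u|<t]=O(k t^2)$ for the complex sphere (the measure of the relevant tube), which is $o(1)$ only if $t=o(1/\sqrt k)$, so here one instead uses that $\mu=\eps/\sqrt{2k}$ makes $K\mu=O(\eps K/\sqrt k)$, and $\pr[|u|<K\mu]=O(k(K\mu)^2)=O(K^2\eps^2)$, which is $\ll 1/\sqrt K$ by the hierarchy~\eqref{eq-hierarchy}. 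Combining, conditions~(i) and~(ii) together hold with probability $\frac{\ell}{p}\pm O(K^2\eps^2 + p K\mu\sqrt k)=\frac{\ell}{p}\pm O(K^2\eps^2)=\frac{\ell}{p}\pm\frac{1}{\sqrt K}$, again by~\eqref{eq-hierarchy}. Transferring back to vertices via the equal-measure partition and discarding the $o(n)$ boundary error gives every $\V{w}\in W$ exactly $(\frac{\ell}{p}\pm\frac{1}{\sqrt K})n$ neighbours in $Z$; the roles of $W$ and $Z$ are symmetric since $\innerp{\V{z},\V{w}}=\overline{\innerp{\V{w},\V{z}}}$ and both defining conditions are invariant under complex conjugation of $u$ (condition~(i) obviously, and condition~(ii) because $\overline{u}=e^{i(2\pi\ell/p)}\cdot e^{-i(2\pi\ell/p)}\overline u$ — more simply, the arc $[0,\frac{2\pi\ell}{p}]$ maps under conjugation to $[-\frac{2\pi\ell}{p},0]$, which by the same uniformity argument has the same probability $\frac{\ell}{p}$).

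The main obstacle I expect is making the boundary-transfer rigorous: one must argue that replacing ``$\V{z}\in N(\V{w})$'' by ``$\V{z}_j\in D_j$ with $D_j$ meeting $N(\V{w})$'' changes the count by only $o(n)$, which requires controlling the measure of the set of points within distance $\mu/4$ of $\partial N(\V{w})$. Since $\partial N(\V{w})$ consists of the $2p$ great subspheres $\{\im{\rho^h u}=\pm K\mu\}$ and the two great subspheres $\{\arg u\in\{0,\frac{2\pi\ell}{p}\}\}$, each such tube has measure bounded (via Lemma~\ref{lem:cap-UB} applied to the two caps on either side, or via a direct isoperimetric estimate) by something like $O(\sqrt k\,\mu)=O(\eps)$, so the total boundary contribution is $O(p\eps)\ll 1/\sqrt K$, which is comfortably within the stated error — so this obstacle is real but surmountable within the given hierarchy of constants.
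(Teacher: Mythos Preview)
Your overall approach is the same as the paper's: compute the measure of the set $N(\V w)\subseteq\sS^{k-1}(\sC)$ determined by \ref{it-between}(i)--(ii) and transfer to a vertex count via the equal-measure partition. The rotational symmetry argument giving $\lambda(\{\V z:\arg\innerp{\V w,\V z}\in[0,2\pi\ell/p]\})=\ell/p$ is exactly what the paper uses, and your boundary-tube estimate (each boundary piece sits inside a slab $\{|\mathrm{Im}(\rho^h u)|\le c\mu\}$ of measure $O(\sqrt{k}\,\mu)=O(\eps)$) is correct.

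The genuine gap is your treatment of condition~(i). You condition on $|u|\ge K\mu\sqrt{k}=K\eps/\sqrt{2}$, but then bound the complementary event as $\pr[|u|<K\mu]=O(k(K\mu)^2)=O(K^2\eps^2)$, which uses the \emph{wrong} threshold. With the correct threshold $K\eps/\sqrt{2}$ one gets $\pr[|u|<K\eps/\sqrt 2]=O(k(K\eps)^2)$, and since $1/k\ll\eps$ there is no upper bound on $k$, so this is not small. Moreover, your stated ``total probability $O(p\cdot K\mu/|u|)$'' conditioned on $|u|\ge K\mu\sqrt{k}$ should yield $O(p/\sqrt{k})$, not the $O(pK\mu\sqrt{k})$ you then write; and you afterwards drop the term $pK\mu\sqrt{k}=O(pK\eps)$ in favour of $O(K^2\eps^2)$, which is backwards since $pK\eps\gg K^2\eps^2$. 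In short, the conditioning route as written does not produce the claimed bound.

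The fix is to bypass the conditioning entirely and bound each bad strip directly: for fixed $h$, the quantity $\mathrm{Im}(\rho^h\innerp{\V w,\V z})=\reinnerp{-i\rho^{-h}\V w,\V z}$ is a single real coordinate on $\sS^{2k-1}(\sR)$, so $\lambda(\{|\mathrm{Im}(\rho^h u)|\le K\mu\})=O(\sqrt{k}\cdot K\mu)=O(K\eps)$; union over $h\in\sZ_p$ gives $O(pK\eps)\ll 1/\sqrt{K}$. This is precisely what the paper does via Lemma~\ref{lem:cap-LB}. The paper also handles the transfer from measure to vertex count more explicitly than your tube argument: it works with a shrunk set $I(\V w)$ (replacing $K\mu$ by $(K+\tfrac14)\mu$ and the arc $[0,2\pi\ell/p]$ by $[1/\sqrt K,\,2\pi\ell/p-1/\sqrt K]$) and then checks directly, using the law of cosines together with the lower bound $|\innerp{\V w,\V z}|\ge K\mu$ coming from~(i), that any $\V z_j$ whose domain meets $I(\V w)$ genuinely satisfies \ref{it-between}(i)--(ii). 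Your tube estimate is morally equivalent, but note that the hypersurface $\{\arg u=0\}$ is singular at $u=0$, so the argument perturbation is only controlled once one has $|u|\ge K\mu$; you should make that dependence explicit.
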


\begin{proof}
For each point $\V{x}\in \sS^{k-1}(\sC)$, define sets
\begin{align*}
	J(\V{x})&:=\left\{\V{y}\in \sS^{k-1}(\sC): \bigabs{\iminnerp{\V{x}, \rho^h \V{y}}}>\textstyle{\left(K+\frac{1}{4}\right)}\mu, \text{ for all } h\in \sZ_p\right\},\text{ and }\\
	I(\V{x})&:=\left\{\V{y}\in J(\V{x}): \arg\innerp{\V{x},\V{y}}\in \left[1/\sqrt{K},~2\pi \ell/p-1/\sqrt{K}\right]\right\}.
\end{align*} 
We shall show that for each vertex $\V w\in W$, vertices whose domains intersect the associated set $I(\V w)$ are adjacent to $\V w$. That is, 
\begin{equation}\label{eq:Zw}
	Z_\V{w}:=\{\V{z}_j\in Z: D_j\cap I(\V{w})\neq \varnothing\}\subseteq N_G(\V w).
\end{equation}

We first bound the measure of this associated set.
\begin{claim}\label{lem-crossnbrs}
	For any $\V{x}\in \sS^{k-1}(\sC)$, we have $\lambda(I(\V{x}))\ge \frac{\ell}{p}-\frac{1}{\sqrt{K}}$.
\end{claim}
\begin{poc}
	Fix an arbitrary $\V{x}\in \sS^{k-1}(\sC)$. Define 
	$$L:=\left\{\V{y}\in \sS^{k-1}(\sC): |-\V{x}i-\V{y}|\le\sqrt{2}-K\mu\right\}.$$
	By Lemma~\ref{lem:cap-LB} (with $\delta=\eps K$), we see that $\lambda(L)\ge \frac{1}{2}-\sqrt{2}\eps K$. For each $y\in L$, using~\eqref{eq-hierarchy},
	\begin{equation*}
		2- 2\,\iminnerp{\V{x},\V{y}}=2-2\,\reinnerp{-\V{x}i,\V{y}}=|- \V{x}i-\V{y}|^2\le (\sqrt{2}-K\mu)^2<2-2\left(K+\tfrac{1}{4}\right)\mu,
	\end{equation*}
    and so  $\iminnerp{\V{x},\V{y}}>(K+\frac{1}{4})\mu$. Thus, by symmetry, the measure of the set
	$$
	\left\{\V{y}\in \sS^{k-1}(\sC): \bigabs{\iminnerp{\V{x},\V{y}}}\le\left(K+\tfrac{1}{4}\right)\mu\right\}$$
	is at most $1-2\lambda(L)\le 1-2\left(\frac{1}{2}-\sqrt{2}\eps K\right)=2\sqrt{2}\eps K$.
	Since $\innerp{\V{x},\rho^h\V{y}}=\innerp{\rho^{-h}\V{x},\V{y}}$, we can take the union bound of such sets over $\V{x}$, $\rho\V{x}$,$\ldots$, $\rho^{p-1}\V{x}$ to deduce that $\lambda(\overline{J(\V{x})})\le 2\sqrt{2}\eps Kp$, where $\overline{J(\V{x})}:=\sS^{k-1}(\sC)\setminus J(\V{x})$. Therefore, by~\eqref{eq-hierarchy}, we get
	\begin{equation*}
		\lambda(I(\V{x}))\ge\frac{1}{2\pi}\left(\frac{2\pi\ell}{p}-\frac{2}{\sqrt{K}}\right)-\lambda(\overline{J(\V{x})})\ge \frac{\ell}{p}-\frac{1}{\sqrt{K}}.
	\end{equation*}
\end{poc}

	Fix a vertex $\V{w}\in W$ and let $Z_{\V w}$ be as in~\eqref{eq:Zw}. As each domain $D_j$ has measure $\frac{1}{n}$, the above claim entails
	\begin{equation*}
		|Z_\V{w}|\ge n\sum_{\V{z}_{j}\in Z_\V{w}}\lambda(D_j\cap I(\V{w}))=n\cdot\lambda(I(\V{w}))\ge \left(\frac{\ell}{p}-\frac{1}{\sqrt{K}}\right)n.
	\end{equation*}  
    We are left to show $Z_{\V w}\subseteq N_G(\V w)$ and that the upper bound on the degrees can be obtained similarly.
	
	Fix a vertex $\V{z}_j\in Z_\V{w}$ and take a point $\V{z}^*\in D_j\cap I(\V{w})$. As $\V z_j,\V z^*\in D_j$, $|\V{z}^*-\V{z}_j|\le \mu/4$. For any $h\in \sZ_p$, using the triangle inequality, that $\V z^*\in I(\V{w})$, and the Cauchy–Schwarz inequality, we see that
	\begin{align*}
		|\im{\rho^h \innerp{\V{w},\V{z}_j}}|=|\iminnerp{\V{w},\rho^{-h}\V{z}_j}|&\ge |\iminnerp{\V{w},\rho^{-h}\V{z}^*}|-|\iminnerp{\V{w},\rho^{-h}\V{z}^*}-\iminnerp{\V{w},\rho^{-h}\V{z}_j}|\\
		&\ge (K+\tfrac{1}{4})\mu-|\innerp{\V{w},\rho^{-h}\V{z}^*}-\innerp{\V{w},\rho^{-h}\V{z}_j}|\\
		&\ge(K+\tfrac{1}{4})\mu-|\V{w}|\cdot|\rho^{-h}\V{z}^*-\rho^{-h}\V{z}_{j}|\\
		&=(K+\tfrac{1}{4})\mu-|\V{z}^*-\V{z}_{j}|\ge K\mu,
	\end{align*}  
	verifying~\ref{it-between}(i). 
	
	For~\ref{it-between}(ii), let $x:=\innerp{\V{w},\V{z}_j}$ and $y:=\innerp{\V{w},\V{z}^*}$. Then as above we have $|y|\ge |\iminnerp{\V{w},\V{z}^*}|\ge (K+\frac{1}{4})\mu$, $|x-y|\le |\V w||\V z^*-\V z_j|\le \mu/4$ and so $|x|\ge |y|-|x-y|\ge K\mu$.
	Consider the triangle in the complex plane with vertices $0,x,y$ and let $\alpha$ be its angle at $0$. Then the law of cosines implies that
	$$\cos\alpha=\frac{|x|^2+|y|^2-|x-y|^2}{2|x||y|}\ge\frac{(1-\frac{1}{8K})(|x|^2+|y|^2)}{2|x||y|}\ge 1-\frac{1}{8K}.$$
	As $\cos \theta\le 1-\frac{\theta^2}{4}$ when $\theta\in(0,1)$, we get $|\arg\innerp{\V{w},\V{z}_j} - \arg\innerp{\V{w},\V{z}^*}|=\alpha\le 1/\sqrt{2K}$. Therefore $\arg\innerp{\V{w},\V{z}_j}\in [0,2\pi\ell/p]$ as $\V z^*\in I(\V w)$, implying~\ref{it-between}(ii).
	
	We omit the proof of the upper bound on degrees since it is very similar but easier, noting that the upper bound corresponding to Claim~\ref{lem-crossnbrs} is clear as the set of $\V{y} \in \sS^{k-1}(\mathbb{C})$ with $\arg\innerp{\V{x},\V{y}} \in [s,t]$ trivially has measure at most $\frac{t-s}{2\pi}$.
\end{proof}

\subsection{Clique number of $G$}
Finally, we verify~\ref{CBE:clique}. 
We will need the following lemma. 

\begin{lemma}\label{lem-farpairs}
	Let $U$ be a subset of $W$ or $Z$ such that $G[U]$ is a clique. Then there exists a pair of vertices $\V{u},\V{u}'\in U$ such that $\V{u}$ is an $h$-rotation of $\V{u}'$ with $\min\{|U|-1, \floor{p/2}\}\le h\le \floor{p/2}$.
\end{lemma}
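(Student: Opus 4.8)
The plan is to recast the statement as a short combinatorial fact about the cyclic group $\sZ_p$ and then establish that fact by a ``diametral pair'' argument, the one extra ingredient being the compositional behaviour of rotations inside a clique supplied by Lemma~\ref{obs-samerotation}. Throughout, the \emph{circular distance} between $a,a'\in\sZ_p$ means $\min\{(a-a')\bmod p,\ (a'-a)\bmod p\}$, a quantity always lying in $\{0,1,\dots,\floor{p/2}\}$.

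\emph{Reduction.} Write $m:=|U|$; the cases $m\le 1$ are vacuous, so assume $m\ge 2$. Fix $\V u_0\in U$. Since $U$ is a clique, each $\V u\in U\setminus\{\V u_0\}$ is an $h(\V u)$-rotation of $\V u_0$ for some $h(\V u)\in[p-1]$, and applying Lemma~\ref{obs-samerotation} to the triangle $\V u,\V u',\V u_0$ (specifically the part asserting $h_i\neq h_j$) shows these labels are pairwise distinct; together with $h(\V u_0):=0$ they form a set $S\subseteq\sZ_p$ with $|S|=m$. The first part of Lemma~\ref{obs-samerotation}, used for pairs not involving $\V u_0$ and checked directly when one vertex is $\V u_0$, tells us that for distinct $\V u,\V u'\in U$ the vertex $\V u$ is an $\bigl((h(\V u)-h(\V u'))\bmod p\bigr)$-rotation of $\V u'$. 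Interchanging the two vertices turns a rotation index $h$ into $p-h$ (because $|\V u'-\rho^{p-h}\V u|=|\V u-\rho^h\V u'|$), so a pair of clique vertices whose labels are at circular distance $d:=\min\{h,p-h\}$ in $\sZ_p$ realises, in one of the two orderings, a $d$-rotation, with $d\le p/2$ and hence $d\le\floor{p/2}$. It therefore suffices to prove: every $S\subseteq\sZ_p$ with $|S|=m\ge 2$ contains two elements whose circular distance is at least $\min\{m-1,\floor{p/2}\}$.

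\emph{The combinatorial fact.} Let $D$ be the largest circular distance between two elements of $S$. As $D\le\floor{p/2}$ always, if $D=\floor{p/2}$ we are done; so assume $D\le\floor{p/2}-1$, and I shall deduce $D\ge m-1$. Choose $v,v'\in S$ at circular distance exactly $D$; after possibly swapping their names, $v'=v+D$ in $\sZ_p$. Every $w\in S$ is within circular distance $D$ of each of $v,v'$, hence lies in the intersection of the cyclic intervals $\{v-D,\dots,v+D\}$ and $\{v,\dots,v+2D\}$. Since $D\le\floor{p/2}-1$ forces $2D+1<p$, neither interval wraps around, and their intersection is exactly $\{v,v+1,\dots,v+D\}$, of size $D+1$. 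Thus $m=|S|\le D+1$, i.e.\ $D\ge m-1$. Combining the two cases: if $m\le\floor{p/2}$ then $D\ge m-1=\min\{m-1,\floor{p/2}\}$; and if $m\ge\floor{p/2}+1$ then $D\le\floor{p/2}-1$ is impossible (it would give $D\ge m-1>\floor{p/2}-1$), so $D=\floor{p/2}=\min\{m-1,\floor{p/2}\}$.

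Once the diametral-pair trick is found there is no substantial obstacle; the delicate points are purely bureaucratic: one must keep the cyclic interval $\{v-D,\dots,v+D\}$ from becoming all of $\sZ_p$ (which is exactly why the value $D=\floor{p/2}$ is peeled off at the start, and is also the reason the bound cannot be pushed above $\floor{p/2}$), confirm that the rotation index of any edge of a clique may be taken in $[1,\floor{p/2}]$ via the interchange identity above, and dispatch the trivial case $m=2$, for which Lemma~\ref{obs-samerotation} is not even needed.
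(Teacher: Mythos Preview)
Your reduction to a statement about subsets of $\sZ_p$ is correct, and the overall strategy---bound the maximum circular distance $D$ from below by showing $|S|\le D+1$ whenever $D\le\floor{p/2}-1$---is sound. However, the key step contains a genuine gap: the claim that the cyclic intervals $\{v-D,\dots,v+D\}$ and $\{v,\dots,v+2D\}$ intersect exactly in $\{v,\dots,v+D\}$ is false when $3D\ge p$. Although each interval has length $2D+1<p$, the two arcs can meet again on the far side of the circle. For a concrete instance take $p=8$, $D=3$ (which does satisfy $D\le\floor{p/2}-1$) and $v=0$: the intervals are $\{5,6,7,0,1,2,3\}$ and $\{0,1,2,3,4,5,6\}$, whose intersection is $\{0,1,2,3,5,6\}$, of size $6$ rather than $D+1=4$. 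Indeed the set $S=\{0,3,5,6\}$ has maximum circular distance $3$ yet lies in no arc of four consecutive residues, so the ``$S$ fits in a short arc'' heuristic behind your argument is simply false in this regime.

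The desired bound $|S|\le D+1$ is nevertheless true and is most cleanly obtained by a pigeonhole argument that is essentially the paper's own: assuming $0\in S$, so that $S\subseteq\{-D,\dots,D\}$, observe that the $D$ pairs $\{-a,\,D+1-a\}$ for $a\in[D]$ partition $\{-D,\dots,-1,1,\dots,D\}$, and each such pair has integer difference $D+1$, hence circular distance exactly $D+1>D$ (using $D+1\le p-D-1$). Thus $S\setminus\{0\}$ meets each pair in at most one point, giving $|S|\le D+1$. The paper carries out precisely this pigeonhole (pairing at difference $|U|-1$ after first reducing to $|U|\le\floor{p/2}+1$), bypassing the diametral-pair framing altogether.
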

\begin{proof}
	Note that the upper bound is trivial, as in every pair of adjacent vertices, taking the smaller angle we see that one is an $h$-rotation of the other for some $h\le \floor{p/2}$. For the lower bound, set $u:=|U|$. We will prove the case when $u-1\le \floor{p/2}$. The other case can be reduced to this case by taking a subset of $U$ of size $\floor{p/2}+1$. 
	
	Fix a vertex $\V{u}_0\in U$. By~\ref{it-insideW},  every vertex $\V{u}\in U\setminus\{\V{u}_0\}$ is an $h$-rotation of $\V{u}_0$, for some $h\in[p-1]$. We may assume that $h\in \{-u+2,\ldots,u-2\}\setminus\{0\}$, for otherwise, $\V{u}_0,\V{u}$ is the pair we seek. Pair up all elements in $\{-u+2,\ldots,u-2\}\setminus\{0\}$ such that their difference is $u-1$, that is, partition it into pairs $\{-u+1+j,j\}$, for all $j\in [u-2]$. We say a vertex in $U\setminus\{\V{u}_0\}$ \emph{belongs to the $j$-th pair} if it is either a $(-u+1+j)$-rotation or a $j$-rotation of $\V{u}_0$. By the Pigeonhole Principle, there exists a pair of vertices $\V{u},\V{u}'\in U\setminus\{\V{u}_0\}$ that form the $j$-th pair for some $j\in[u-2]$. By Lemma~\ref{obs-samerotation}, they are not the same rotation of $\V{u}_0$. Therefore, without loss of generality, we can assume that $\V{u}$ is a $j$-rotation of $\V{u}_0$ and $\V{u}'$ is a $(-u+1+j)$-rotation of $\V{u}_0$. Thus, by Lemma~\ref{obs-samerotation}, $\V{u}$ is a $(u-1)$-rotation of $\V{u}'$.
\end{proof}

Take $\ell\le p/2$. Suppose to the contrary that $G$ contains a copy of $K_{p+\ell+1}$ on vertex set $X\cup Y$, with, say, $X\subseteq W$, $Y\subseteq Z$ and $|X|\ge |Y|$.
So $\frac{p}{2} < |X| \leq p$ (since $X$ is $K_{p+1}$-free) and hence $|Y| \geq \ell+1$.
Let $\V{x}_0 \in X$ be arbitrary, set
$$
A := \{h \in \mathbb{Z}_p: X \text{ contains an } h\text{-rotation of }\V{x}_0\}\quad\text{and}\quad B := \{\rho^h: h \in A\} \subseteq \{z \in \mathbb{C} : |z|=1\}.
$$
If the closed convex hull of $B$ fails to contain $0$, then $B$ is contained in an open half-plane of $\mathbb{C}$ bounded by a line passing through $0$, so we have $|X|=|B| \leq \frac{p}{2}$, a contradiction.
Thus $0$ lies in the closed convex hull of $B$. Consequently, by Carath\'eodory's theorem, there are $h_1,h_2,h_3 \in A$ such that $0$ is in the closed triangle with vertices $\rho^{h_1},\rho^{h_2},\rho^{h_3}$.
In other words, there are reals $\lambda_1,\lambda_2,\lambda_3 \in [0,1]$ such that $\lambda_1+\lambda_2+\lambda_3 =1$ and $\lambda_1\rho^{h_1}+\lambda_2\rho^{h_2}+\lambda_3\rho^{h_3}=0$.
Pick $\V{x}_1,\V{x}_2,\V{x}_3 \in X$ such that $|\V{x}_i-\rho^{h_i}\V{x}_0| \leq \sqrt{\mu}$ for all $i \in [3]$.
By the triangle inequality
$$
\V{x} := \lambda_1\V{x}_1+\lambda_2\V{x}_2+\lambda_3\V{x}_3 = \lambda_1(\V{x}_1-\rho^{h_1}\V{x}_0)+\lambda_2(\V{x}_2-\rho^{h_2}\V{x}_0)+\lambda_3(\V{x}_3-\rho^{h_3}\V{x}_0) 
$$
satisfies $|\V{x}| \leq \sqrt{\mu}$.
By Lemma~\ref{lem-farpairs}, there exist a pair of vertices $\V{y},\V{y}' \in Y$ and an integer $m$ such that $\ell \leq m \leq \frac{p}{2}$ and $|\V{y}-\rho^m\V{y}'| \leq \sqrt{\mu}$. Now,
\begin{equation}\label{eq-xyy'}
\iminnerp{\V{x},\V{y}-\rho^m\V{y}'} \leq |\innerp{\V{x},\V{y}-\rho^m\V{y}'}| \leq |\V{x}||\V{y}-\rho^m\V{y}'| \leq \mu.
\end{equation}
 On the other hand,~\ref{it-between}(ii) implies that $0 \leq \arg\innerp{\V{x}_i,\V{y}}, \arg\innerp{\V{x}_i,\V{y}'} \leq \frac{2\pi\ell}{p}$. So
 $$
 0 \leq \arg(-\rho^{-m}\innerp{\V{x}_i,\V{y}'}) \leq 2\pi\left(\frac{\ell}{p}+\frac{p/2-m}{p}\right) \leq \pi
 $$
 since $\ell \leq m$.
 So the imaginary parts of $\innerp{\V{x}_i,\V{y}},-\rho^{-m}\innerp{\V{x}_i,\V{y}'}$ are positive, and
 \ref{it-between}(i) yields
$$
\iminnerp{\V{x}_i,\V{y}}, \im{-\rho^{-m}\innerp{\V{x}_i,\V{y}'}} \geq K\mu\quad\text{for all }i \in [3], 
$$
whence $\iminnerp{\V{x},\V{y}-\rho^m\V{y}'} \geq 2K\mu$.
This contradiction to~(\ref{eq-xyy'}) concludes the proof of~\ref{CBE:clique}.


\section{Multipartite Bollob\'as-Erd\H{o}s graph}\label{sec-LB-Thm1}
In this section, we prove Theorem~\ref{thm-2tri-exact}. Let $\ell,p,q$ be positive integers where $q$ is even and $\ell \geq p(q-1)$. We will construct an $n$-vertex graph $G$ satisfying the following:
\stepcounter{propcounter}
\begin{enumerate}[label = {\bfseries \Alph{propcounter}\arabic{enumi}}]
	\item\label{it:MBE-ind} $\al_{2^\ell}(G)= o(n)$;
	\item\label{it:MBE-inner} $G$ can be made $q$-partite by removing $o(n^2)$ edges;
	\item\label{it:MBE-edges} $e(G)=\left(\frac{2^p(q-1)}{2^{\ell+1} q}-o(1)\right)n^2$;
	\item\label{it:MBE-clique} $G$ is $K_{2^\ell+2^p+q-1}$-free.
\end{enumerate}
This will show that $\varrho_{2^\ell}(2^\ell+2^p+q-1) \geq \frac{2^p(q-1)}{2^\ell q}$. 


\subsection{Construction}
Choose constants
\begin{align}
0 < 1/m \ll 1/k \ll \eps \ll 1/\ell,1/p,1/q \quad\text{and}\quad\mu:=\eps/\sqrt{k}.
\end{align}
Partition $\sS^{k}(\sR)$ into $m$ domains $D_1,\ldots,D_m$ with equal measure and  diameter at most $\frac{\mu}{4}$. Next, let $P\subseteq \sS^{k}(\sR)$ be an arbitrary set of $m$ points with exactly one point from each domain. 

We will construct a multipartite analogue $G$ of the Bollob\'as-Erd\H{o}s graph in two stages. Our construction is inspired by ideas from~\cite{BL-1,BL-2, Rodl}, which themselves build on the Bollob\'as-Erd\H os graph. The graph $G$ will be built on vertex set $V_1\cup\ldots\cup V_q$. For the inner edges, each $G[V_i]$, $i\in [q]$, will be isomorphic to a high dimensional Borsuk graph $B(\ell)$, which is defined via a certain auxiliary hypergraph $\cB$ encoding geometric information about cliques in $B(\ell)$; see Section~\ref{sec-hdB}. The adjacencies between $V_i$ and $V_j$ are more involved; roughly speaking, cross edges are set up with certain geometric constraints on their endpoints (depending on $(i,j)$); see Section~\ref{sec-mBE}. These geometric constraints will be used in the rest of this section, together with the properties of the high dimensional Borsuk graph, to bound the clique number and prove various other properties of $G$.

\subsubsection{High dimensional Borsuk graph}\label{sec-hdB}
We describe the \emph{$\ell$-dimensional Borsuk graph} $B(\ell)$ with vertex set $V(B(\ell))\subseteq \prod_{h\in[\ell]}S_h$,
where each $S_h$ is a copy of $\sS^k(\sR)$. 
We will define $B(\ell)$ via a sequence of auxiliary (hyper)graphs 
$$\{Q_h\}_{h\in[\ell]}\longrightarrow \cB \longrightarrow \cB'\longrightarrow B(\ell).$$	
This part of the construction follows~\cite{BL-2}.
First, let
\begin{align}
r := 2^\ell\quad\text{and}\quad\zeta:=\exp\left(-\frac{k\mu}{3\cdot2^{2\ell}}\right),
\end{align} 
chosen so that a spherical cap of diameter at most $2-\eps/(2\sqrt{k})$ has measure at most $\zeta$.

\medskip
\noindent
\textbf{Step 1: $\{Q_h\}_{h \in [\ell]}$.}
Let $Q_0$ be a copy of $K_{r}$ with $V(Q_0):=\lbrace b^{(1)},\ldots,b^{(r)}\rbrace$, where each $b^{(i)}$ is a distinct binary string of length $\ell$. 
For each $h \in [\ell]$, let $Q_h\cong K_{r/2,r/2}$ be the spanning subgraph of $Q_0$ in which the two partite sets consist of vertices with $0$ and $1$ respectively in the $i$-th coordinate. 
Thus $Q_0=\bigcup_{h\in [\ell]}Q_h$ (note that this union is not edge-disjoint). 

\medskip
\noindent
\textbf{Step 2: $\cB$.}	
Next, we construct an $r$-uniform hypergraph $\cB$ on vertex set
$$
V(\cB) := P^\ell\subseteq  \prod_{h\in[\ell]}S_h.
$$
We write each vertex $\V{v}\in P^{\ell}$ as $\V{v} = (v_1,\ldots,v_\ell)$, where $v_h \in S_h$ is the projection of $\V{v}$ on $S_h$. Then  
\begin{align}
\{\V{v}^{(1)},\ldots,\V{v}^{(r)}\} \in E(\cB) \iff~ &\text{for all }h\in [\ell] \text{ and }i,j\in [r], \text{ whenever }b^{(i)}b^{(j)}\in E(Q_h), \nonumber\\
&\text{we have }\bigabs{v^{(i)}_h - v^{(j)}_h}\ge 2-\mu. \label{def:cB}
\end{align}
In other words, the projections of $\V{v}^{(i)}$ and $\V{v}^{(j)}$ onto $S_h$ are almost antipodal. 
Note that the definition depends on the labelling of the vertices within the hyperedge.

\medskip
\noindent
\textbf{Step 3: $\cB'$.}		
To construct the hypergraph $\cB'$, we apply a theorem of Balogh and Lenz~\cite[Theorem~16]{BL-1}.
Rather than state the theorem in its fully generality, we only state its conclusion when applied to $\cB$ (with their $(\gamma, t)$ being our $(\zeta, t^{1/\ell})$ here):

There exist $t \in \mathbb{N}$ and an $r$-uniform hypergraph $\cB'$ such that the following holds.
Given $v \in P$, let $R(v)$ be a set of $t^{1/\ell}$ distinct points from the same domain $D_i$ as $v$ which are arbitrarily close to $v$.
We say that $\V{u} = (u_1,\ldots,u_{\ell})$ \emph{corresponds} to $\V{v} \in V(\cB)$ if $u_h \in R(v_h)$ for all $h \in [\ell]$. 
Then
$$
V(\cB') := \{\V{u} : \V{u} \text{ corresponds to some } \V{v} \in V(\cB)\} \subseteq \prod_{h \in [\ell]}S_h
\quad\text{ (so }|V(\cB')| = m^\ell t),
$$
and $\cB'$ satisfies the following:	
\begin{itemize}
	\item Fix an arbitrary edge $\lbrace \V{v}^{(1)},\ldots, \V{v}^{(r)}\rbrace\in E(\cB)$. For all $i\in [r]$, let $U_i$ be a set of at least $\zeta t$ vertices of $\cB'$ corresponding to $\V{v}^{(i)}$. Then the hypergraph $\cB'$ contains at least one hyperedge with exactly one vertex in each $U_i$. 
	\item There is no subhypergraph $\cB''\subseteq \cB'$ with $|V(\cB'')|\le r^3$, $|E(\cB'')| \geq 1$ and $|V(\cB'')|+(1+\zeta-r)(|E(\cB'')|-1)<r$. (Informally, all subhypergraphs of $\cB'$ are sparse.) 
\end{itemize}

We remark that the proof of~\cite[Theorem~16]{BL-1} uses an idea of R\"odl~\cite{Rodl}. Essentially, the claimed hypergraph is obtained by first blowing up the original one, then taking a random subhypergraph of it and finally using a first moment deletion method to get rid of small configurations.


Note that $\cB'$ has the same geometric properties as $\cB$ in the sense that every hyperedge $\{\V{v}^{(1)},\ldots,\V{v}^{(r)}\} \in E(\cB')$ maintains the property on the right-hand side of~(\ref{def:cB}).


\medskip
\noindent
\textbf{Step 4: $B(\ell)$.}	
The high dimensional Borsuk graph $B(\ell)$ is defined to be the \emph{shadow graph} of $\cB'$,  that is, 
\begin{align*}
&V(B(\ell)):=V(\cB') \quad\text{and}\\
&\V{u}\V{v} \in E(B(\ell)) \iff \{\V{u},\V{v}\}\text{ is contained in some hyperedge of }\cB'.
\end{align*}
Note that the standard Borsuk graph is precisely $B(1)$.

It was proved in~\cite[Lemmas~6, 14]{BL-2} (using the second bullet point above) that
\begin{equation}\label{eq:shadow}
	\text{ every set $A$ of vertices that spans a clique in $B(\ell)$ lies in some hyperedge of $\cB'$,}
\end{equation}
implying that $B(\ell)$ is $K_{2^{\ell}+1}$-free; and furthermore, $B(\ell)$ has sublinear $2^\ell$-independence number: $\alpha_{2^\ell}(B(\ell)) \leq  r^\ell2^{\ell+r+1}\zeta\cdot  |B(\ell)|$.

%
%


\subsubsection{The final graph $G$}\label{sec-mBE}
Let $n:=m^\ell tq$ (where $t$ was defined in Step~3).
We are now ready to construct the $n$-vertex multipartite Bollob\'as-Erd\H{o}s graph $G$.
We let
$$
V(G) := V_1 \cup \ldots \cup V_q\quad\text{where}\quad G[V_i] \text{ is a copy of }B(\ell) \text{ for all }i \in [q].
$$ 

For cross edges, recall that each vertex $\V{v}=(v_1,\ldots, v_\ell)\in V(G)$ is a length-$\ell$ vector where $v_h\in S_h$, for all $h\in [\ell]$. The adjacencies of pairs of vertices in $G$ between partite classes are determined by the distances of their projections onto certain blocks of coordinates. The relevant blocks for each pair $(V_i,V_{i'})$, $\{i,i'\}\in{[q]\choose 2}$, come from an edge-colouring, as follows. 
Let $\chi$ be a proper $(q-1)$-edge colouring of the $q$-clique on $\lbrace V_1,\ldots,V_q\rbrace$, with colours $\lbrace 1,\ldots,q-1\rbrace$ (so each colour class is a perfect matching, and $\chi$ exists since $q$ is even).
For brevity, let $c_{ij} := \chi(V_iV_j)$ for all $\{i,j\}\in{[q]\choose 2}$.

For $h,h'\in[\ell]$ and $\{i,i'\}\in{[q]\choose 2}$, we say that coordinates $(h,h')$ are \emph{$(i,i')$-related} if there exist $j \in [q]\setminus\lbrace i,i'\rbrace$ and $s \in [p]$ such that 
\begin{itemize}
	\item either $(h,h')=((c_{ij}-1)p+s,(c_{i'j}-1)p+s)$,
	
	\item  or $h = h' > p(q-1)$.
\end{itemize}

Finally, given $\V{u}  \in V_i$ and $\V{v}  \in V_{i'}$, we define
\begin{align*}
\V{u}\V{v} \in E(G[V_i,V_{i'}]) \quad \iff \quad &|u_{h}-v_{h'}| \leq \sqrt{2}-\mu \text{ whenever }(h,h') \text{ are } (i,i')\text{-related}.
\end{align*}
Informally, the projection $u_h$ is in the hemisphere centred at $v_{h'}$, and vice versa.

\medskip

This completes the construction of $G$.

\medskip

By construction, $\alpha_{2^{\ell}}(G)\le q\cdot\alpha_{2^{\ell}}(B_{\ell})$. Recall that $B(\ell)$ is $K_{2^{\ell}+1}$-free and has sublinear $2^{\ell}$-independence number, thus verifying~\ref{it:MBE-ind}. Note that~\ref{it:MBE-inner} then follows immediately as $\Delta(B(\ell))\le\alpha_{2^\ell}(B(\ell))=o(|B_{\ell}|)$ due to $K_{2^{\ell}+1}$-freeness.

We will spend the rest of the section verifying~\ref{it:MBE-edges} and~\ref{it:MBE-clique}.

\subsection{Cross density}
In this subsection, we verify~\ref{it:MBE-edges}. We will use the following easy observation about how coordinates are related between different $V_i$'s.
\begin{observation}\label{obs-relatedcoord}
	Let $i,i'\in [q]$ be distinct. For all $h \in [\ell]$, there is at most one $h' \in [\ell]$ such that $(h,h')$ are $(i,i')$-related. 
	Furthermore, for all but exactly $p$ values of $h\in [\ell]$, there exists a unique $h'\in[\ell]$ such that $(h,h')$ are $(i,i')$-related.
\end{observation}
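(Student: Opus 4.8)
The plan is a direct case analysis, splitting the coordinates into the \emph{low block} $\{h:h\le p(q-1)\}$ and the \emph{high block} $\{h:h>p(q-1)\}$, and exploiting the unique representation $h=(c-1)p+s$ with $c\in[q-1]$ and $s\in[p]$ of each low coordinate. The one combinatorial fact used throughout is that $\chi$ is a proper edge-colouring, so at every vertex $V_i$ each of the $q-1$ colours appears on exactly one incident edge; everything else is bookkeeping, and in particular no geometry enters.

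First I would dispose of the high block. If $h>p(q-1)$, then the first bullet in the definition of $(i,i')$-relatedness cannot hold, since it would force $h=(c_{ij}-1)p+s\le p(q-1)$; hence only the second bullet can apply, and it yields $h'=h$ as the one and only partner. So every $h>p(q-1)$ has exactly one related $h'$, which already settles both parts of the Observation on this block.

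Next I would treat the low block. Fix distinct $i,i'$ and a coordinate $h\le p(q-1)$, and write $h=(c-1)p+s$ uniquely with $c\in[q-1]$ and $s\in[p]$. The second bullet never applies here. For the first bullet, the required index $j$ must satisfy $c_{ij}=c$; since $\chi$ is proper, there is a unique such $j$, say $j^*$, and necessarily $j^*\ne i$. If $j^*=i'$, equivalently $c=c_{ii'}$, then no admissible $j\in[q]\setminus\{i,i'\}$ exists, so $h$ has no related $h'$; this occurs for exactly the $p$ coordinates $h\in\{(c_{ii'}-1)p+s:s\in[p]\}$. If $j^*\ne i'$, then $j^*$ is the only admissible choice of $j$, and since $s$ is also determined by $h$, the unique related coordinate is $h'=(c_{i'j^*}-1)p+s$, which lies in $[p(q-1)]\subseteq[\ell]$; in particular $h$ has exactly one related $h'$.

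Putting the two blocks together: exactly the $p$ low coordinates whose colour part equals $c_{ii'}$ have no related partner, every other coordinate has exactly one, and no coordinate has two --- which is precisely the assertion. There is no real obstacle; the only places calling for a moment's care are the uniqueness of the ``base-$p$'' expansion (immediate from $c\in[q-1]$, $s\in[p]$), the well-definedness of $j^*$ (immediate from properness of $\chi$), and making sure the two bullets are never simultaneously available so that the counts do not overlap (immediate from the $p(q-1)$ threshold).
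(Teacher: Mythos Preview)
Your proof is correct and follows essentially the same approach as the paper: both arguments rest on the unique representation $h=(c-1)p+s$ with $c\in[q-1]$, $s\in[p]$ for low coordinates, and the fact that properness of $\chi$ pins down a unique $j^*$ with $c_{ij^*}=c$, with the exceptional $p$ coordinates arising precisely when $c=c_{ii'}$. Your explicit split into high and low blocks and the verification that the two bullets in the definition are mutually exclusive make the case analysis slightly more thorough than the paper's terser version, but the underlying idea is identical.
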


\begin{proof}
	Suppose that $(h,h')$ and $(h,h'')$ are both $(i,i')$-related, where $h' \neq h''$. Then
	there exist $j,j' \in [q]\setminus\lbrace i,i'\rbrace$ and $s \in [p]$ such that $(c_{ij}-1)p+s = h = (c_{ij'}-1)p+s$.
	Thus $c_{ij}=c_{ij'}$. Since $\chi$ is a proper edge-colouring, we have $j=j'$. Then $h' = (c_{i'j}-1)p+s = h''$, a contradiction.
	
	To prove the second part, note that, by definition, we have $\lbrace c_{ij} : j \in [q]\setminus\lbrace i,i'\rbrace \rbrace = [q-1]\setminus\lbrace c_{ii'}\rbrace$. Therefore, if some $h\in [\ell]$ is not $(i,i')$-related to any other $h'\in [\ell]$, then $h=(c_{ii'}-1)p+s$ for some $s \in [p]$.
\end{proof}

Fix arbitrary distinct $i,i'\in [q]$ and a vertex $\V{v}=(v_1,\ldots,v_\ell)\in V_i$. We will compute the degree of $\V{v}$ to $V_{i'}$. By
Observation~\ref{obs-relatedcoord}, there are exactly $\ell-p$ pairs $(h,h')\in [\ell]^2$ which are $(i,i')$-related. Fix an arbitrary such pair $(h,h')$. Define
\begin{align*}
I&:=\left\{j\in [m]:d_{\max}(D_j,v_h)\le \sqrt{2}-\mu\right\},\quad\text{and}\quad\\
L&:=\left\{y\in \sS^k(\sR):|y-v_h|\le \sqrt{2}-\frac{3}{4}\mu\right\}.
\end{align*}
Recall that $\mu = \eps/\sqrt{k}$ and $1/k \ll \eps$ (so $k\mu$ is very large while $\sqrt{k}\mu$ is very small).
By Lemma~\ref{lem:cap-LB}, we have $\lambda(L)\ge  \frac{1}{2}-2\eps$. Since all domains have equal measure and diameter at most $\frac{\mu}{4}$, we have
$|I|\ge\left(\frac{1}{2}-2\eps\right)m$.
Recall that $V(\cB')$ is a subset of $\prod_{h \in [\ell]}S_h$ where for each $\V{v} \in V(\cB)$, there are $t$ vertices of $\cB'$ where the $h$-th coordinate of each one is a distinct point of $D_h$, for all $h \in [\ell]$.
Also, $V_{i'}$ is a copy of $V(B(\ell))=V(\cB')$.
As there are exactly $\ell-p$ many $(i,i')$-related pairs, the number of vertices $\V{u}\in V_{i'}$ such that $|v_{h}-u_{h'}|\le \sqrt{2}-\mu$ is at least $\left(\frac{1}{2}-2\eps\right)m^\ell t$ and clearly at most $\frac{1}{2}m^\ell t$. Thus there are $\left(\frac{1}{2}\pm 2\eps\right)^{\ell-p}m^\ell t$ vertices $\V{u}\in V_{i'}$ that are adjacent to $\V{v}$. Hence
\begin{align*}
e(G)&= \left(\frac{1}{2}\pm 2\eps\right)^{\ell-p}\cdot\frac{m^\ell  t  (q-1)}{2}\cdot n=\left(\frac{1}{2}\pm 2\eps\right)^{\ell-p} \cdot\frac{q-1}{2q}\cdot n^2= \left(\frac{q-1}{2q}\cdot 2^{p-\ell}\pm 4(\ell-p)\eps\right)  n^2,
\end{align*}	
thus verifying~\ref{it:MBE-edges}.




\subsection{Clique number of $G$}
In this subsection, we verify~\ref{it:MBE-clique}. We will use the following simple fact about the graphs $Q_h$.
\begin{observation}\label{obs-indset}
	For all $I \subseteq [\ell]$, $\alpha\left(\bigcup_{h \in I}Q_h\right)=2^{\ell-|I|}$. 
\end{observation}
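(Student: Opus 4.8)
\textbf{Proof proposal for Observation~\ref{obs-indset}.}
The plan is to identify the graph $\bigcup_{h\in I}Q_h$ combinatorially and then read off its independence number directly. Recall that $V\left(\bigcup_{h\in I}Q_h\right)=V(Q_0)=\{b^{(1)},\ldots,b^{(r)}\}$ is the set of all $2^\ell$ binary strings of length $\ell$, and that for each $h$, two strings are adjacent in $Q_h$ precisely when they differ in the $h$-th coordinate. Hence in the union $\bigcup_{h\in I}Q_h$, two strings $x,y$ are adjacent if and only if there is some $h\in I$ with $x_h\neq y_h$; equivalently, $x$ and $y$ are \emph{non}-adjacent if and only if $x_h=y_h$ for every $h\in I$, i.e.\ their restrictions to the coordinate set $I$ coincide.

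Consequently, a set $S$ of strings is independent if and only if all strings in $S$ have one and the same restriction to $I$. For each fixed string $c\in\{0,1\}^I$, the set of length-$\ell$ strings whose restriction to $I$ equals $c$ is an independent set, and its size is $2^{\ell-|I|}$ (the coordinates outside $I$ being free). This is clearly the largest possible independent set, since any independent set is contained in such a class. Therefore $\alpha\left(\bigcup_{h\in I}Q_h\right)=2^{\ell-|I|}$. There is no real obstacle here; the only point to be careful about is the direction of the equivalence (adjacency corresponds to \emph{differing} in some coordinate of $I$, so independence corresponds to \emph{agreeing} on all of $I$), after which the count is immediate.
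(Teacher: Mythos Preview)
Your proof is correct and follows essentially the same approach as the paper: both identify that non-adjacency in $\bigcup_{h\in I}Q_h$ is equivalent to agreeing on all coordinates in $I$, exhibit an independent set of size $2^{\ell-|I|}$ by fixing the $I$-coordinates, and argue that any larger set must contain two strings differing on some $h\in I$. The paper's version is slightly terser (it picks the specific class with all $I$-coordinates equal to $1$ and uses a pigeonhole-style counting for the upper bound), but the content is identical.
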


\begin{proof}
	Let $T := \bigcup_{h \in I}Q_h$.
	If $bb' \notin E(Q_h)$, then $b_h= b'_h$. So the set $\{ b : b_h = 1 \text{ for all }h \in I\}$ is an independent set in $T$ of size $2^{\ell-|I|}$.
	On the other hand, choose an arbitrary subset  $X \subseteq V(T)$ with $|X| \geq 2^{\ell-|I|}+1$.
	Then there are vertices $bb' \in X$ which differ at some coordinate $h \in I$, and so $bb' \in E(T)$.
\end{proof}


\begin{definition}
	A coordinate $h\in[\ell]$  is \emph{lengthy} for a vertex subset $A\subseteq V(B(\ell))$ if there exist two vertices $\V{v},\V{v'}\in A$ with $|v_h - v'_h|\ge 2-\mu$ (i.e.~whose projections onto $S_h$ are almost antipodal).
\end{definition}

For the rest of this subsection, fix a set $A$ of vertices which span a clique in $G$. Given $X \subseteq [\ell]$, define $L_i(X)$ to be the set of lengthy coordinates $h \in X$ for $V_i \cap A$. The following lemma helps us relate the number of lengthy coordinates for a clique to its size.

\begin{lemma}\label{lem-cliquesize}
	For all  $i\in [q]$, we have $|V_{i}\cap A|\le 2^{|L_i([\ell])|}$.
\end{lemma}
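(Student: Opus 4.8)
The plan is to relate $|V_i \cap A|$ to the chromatic-type structure of the Borsuk graph $B(\ell)$ via the property~\eqref{eq:shadow}: since $V_i \cap A$ spans a clique in $G[V_i]$, which is a copy of $B(\ell)$, it must lie inside a single hyperedge of $\cB'$. So first I would invoke~\eqref{eq:shadow} to find a hyperedge $e^* = \{\V{v}^{(1)},\ldots,\V{v}^{(r)}\} \in E(\cB')$ with $V_i \cap A \subseteq e^*$. Recall that $\cB'$ inherits the defining geometric property of $\cB$ from~\eqref{def:cB}: for every $h \in [\ell]$ and every pair $a,b$ with $b^{(a)}b^{(b)} \in E(Q_h)$, the projections satisfy $|v^{(a)}_h - v^{(b)}_h| \ge 2-\mu$. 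Conversely, if $b^{(a)}b^{(b)} \notin E(Q_h)$, then $b^{(a)}$ and $b^{(b)}$ agree in coordinate $h$.

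Next I would set up the combinatorial reduction. Label the vertices of $V_i \cap A$ by the binary strings $b^{(a)}$ (for $a$ ranging over the indices of $e^*$ that actually appear in $V_i \cap A$), and consider the induced subgraph $T := \bigcup_{h \in L_i([\ell])} Q_h$ on exactly those labels. The key observation is: if a coordinate $h$ is \emph{not} lengthy for $V_i \cap A$, then no two vertices of $V_i \cap A$ have almost-antipodal $h$-projections, so — using the geometric property of $\cB'$ — no pair of their labels forms an edge of $Q_h$; in other words, restricting to the labels of $V_i \cap A$, the edges coming from non-lengthy coordinates contribute nothing. Hence the labels of $V_i \cap A$, viewed inside $Q_0$, contain no edge outside $\bigcup_{h \in L_i([\ell])} Q_h$. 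But also $Q_0 = \bigcup_{h\in[\ell]} Q_h$ is complete, so any two distinct labels of $V_i \cap A$ are joined in some $Q_h$; combined with the previous sentence, that $Q_h$ must have $h \in L_i([\ell])$. Therefore the label set of $V_i \cap A$ is a \emph{clique} in $T = \bigcup_{h \in L_i([\ell])} Q_h$, equivalently an independent set in its complement within $Q_0$. Now Observation~\ref{obs-indset} (applied with $I = [\ell] \setminus L_i([\ell])$, noting $\bigcup_{h \notin L_i([\ell])} Q_h$ is the complement of $T$ in $Q_0$) gives $\alpha\!\left(\bigcup_{h \notin L_i([\ell])}Q_h\right) = 2^{\ell - (\ell - |L_i([\ell])|)} = 2^{|L_i([\ell])|}$, so the number of labels — which is exactly $|V_i \cap A|$ — is at most $2^{|L_i([\ell])|}$.

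The main obstacle I anticipate is the careful bookkeeping at the interface between geometry and combinatorics: making precise that "not lengthy in coordinate $h$" forces agreement of labels in coordinate $h$ for \emph{every} pair in $V_i \cap A$, and in particular ruling out the degenerate possibility that two distinct vertices of $V_i \cap A$ carry the same label $b^{(a)}$ (which cannot happen, since distinct vertices of a hyperedge of $\cB'$ carry distinct strings by the labelling convention from Step~2, and the correspondence in Step~3 preserves this). Once that is nailed down, the rest is a clean application of Observation~\ref{obs-indset}. A secondary subtlety is to use the correct direction of~\eqref{def:cB}: edges of $Q_h$ force almost-antipodality, so its contrapositive — non-antipodal projections imply a non-edge of $Q_h$ — is exactly what converts "non-lengthy" into "no $Q_h$-edge among the labels", and this is the step that must be stated explicitly.
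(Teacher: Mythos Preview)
Your approach is exactly the paper's: invoke \eqref{eq:shadow} to place $V_i\cap A$ inside a hyperedge of $\cB'$, use the contrapositive of \eqref{def:cB} to see that the associated binary labels form an independent set in $\bigcup_{h\notin L_i([\ell])}Q_h$, and then apply Observation~\ref{obs-indset} with $I=[\ell]\setminus L_i([\ell])$. Your first paragraph already establishes this independence directly, and that is all you need.

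One small slip to fix: the detour through ``clique in $T=\bigcup_{h\in L_i([\ell])}Q_h$, equivalently an independent set in its complement within $Q_0$'' is fine, but the parenthetical claim that $\bigcup_{h\notin L_i([\ell])}Q_h$ \emph{is} this complement is false in general. The graphs $Q_h$ are not edge-disjoint (an edge $bb'$ lies in $Q_h$ for every coordinate $h$ where $b$ and $b'$ differ), so $\bigcup_{h\notin L_i}Q_h$ can strictly contain $E(Q_0)\setminus E(T)$. Consequently, independence in the complement of $T$ does not by itself give independence in $\bigcup_{h\notin L_i}Q_h$. Just drop the detour and cite your earlier observation (``no pair of their labels forms an edge of $Q_h$'' for each non-lengthy $h$) to get independence in $\bigcup_{h\notin L_i}Q_h$ directly; this is precisely how the paper proceeds.
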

\begin{proof}
	Write $s := |V_i \cap A|$.
	Since $G[V_i\cap A]\subseteq B(\ell)$ is a clique, by~\eqref{eq:shadow}, the vertex subset $V_i\cap A$ lies in some hyperedge of $\cB'$. Recall that hyperedges of $\cB'$ satisfy the right-hand side of~(\ref{def:cB}). Write $V_i\cap A =: \{\V{v}^{(1)},\ldots,\V{v}^{(s)}\}$. Then for all $h \in [\ell]$ and $i,j \in [s]$, whenever $b^{(i)}b^{(j)} \in E(Q_h)$, we have $|v_h^{(i)}-v_h^{(j)}| \geq 2-\mu$.
	Define
	\begin{align*}
	T := \bigcup_{h\in [\ell]\setminus L_i([\ell])}Q_{h}.
	\end{align*}
	We claim that
	$s\le \alpha(T)$.
	Indeed, if not, the graph $T[\{b^{(1)},\ldots, b^{(s)}\}]$ contains at least one edge $b^{(j)}b^{(j')}$, which lies in $Q_h$ for some $h\in [\ell]\setminus L_i([\ell])$. Thus $|v_{h}^{(j)} - v_{h}^{(j')}|\ge 2-\mu$, whence $h$ is a lengthy coordinate for $V_i\cap A$, a contradiction.	
	Therefore Observation~\ref{obs-indset} implies that
	\begin{align*}
	s\le \alpha\left(T\right)= 2^{\ell-(\ell-|L_i([\ell])|)}=2^{|L_i([\ell])|},
	\end{align*}
	finishing the proof of the lemma.
\end{proof}



To bound the clique number of $G$, we need one last lemma bounding the number of lengthy coordinates for $V_i\cap A$, for all $i\in[q]$.

\begin{lemma}\label{lem-allgoodcoord}
	$\sum_{i \in [q]}|L_i([\ell])| \leq \ell+p$.
\end{lemma}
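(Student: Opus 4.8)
The plan is to combine the Bollob\'as--Erd\H os Rhombus lemma (Theorem~\ref{thm-k4}) with the fact that $A$ spans a clique, and then to account carefully for which coordinates can be lengthy for which parts. First I would record the basic obstruction: if $i\neq i'$, the coordinates $(h,h')$ are $(i,i')$-related, $h\in L_i([\ell])$ and $h'\in L_{i'}([\ell])$, then we reach a contradiction. Indeed, lengthiness of $h$ for $V_i\cap A$ provides $\V v,\V v'\in V_i\cap A$ with $|v_h-v'_h|\ge 2-\mu$, and lengthiness of $h'$ for $V_{i'}\cap A$ provides $\V w,\V w'\in V_{i'}\cap A$ with $|w_{h'}-w'_{h'}|\ge 2-\mu$. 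Since $A$ is a clique and $i\neq i'$, each of $\V v,\V v'$ is joined to each of $\V w,\V w'$, so the definition of the cross edges together with the $(i,i')$-relatedness of $(h,h')$ forces $|v_h-w_{h'}|,|v_h-w'_{h'}|,|v'_h-w_{h'}|,|v'_h-w'_{h'}|\le \sqrt2-\mu$. Regarding all four points $v_h,v'_h,w_{h'},w'_{h'}$ as lying in a single copy of $\sS^k(\sR)$ and recalling $\mu<1/4$, this is precisely the configuration forbidden by Theorem~\ref{thm-k4}.

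Next I would split $[\ell]=C\cup F$ with $C:=[p(q-1)]$ and $F:=\{p(q-1)+1,\dots,\ell\}$, noting $|F|=\ell-p(q-1)\ge 0$ since $\ell\ge p(q-1)$. If $h\in F$ then, by definition, $(h,h)$ is $(i,i')$-related for \emph{every} pair $\{i,i'\}$, so the obstruction above shows that $h$ is lengthy for at most one of the sets $V_i\cap A$. Summing over $h\in F$ gives $\sum_{i\in[q]}|L_i([\ell])\cap F|\le |F|=\ell-p(q-1)$.

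For the cross coordinates I would proceed one residue block at a time. Fix $s\in[p]$ and set $C_s:=\{(c-1)p+s:c\in[q-1]\}$, so $C=\bigcup_{s\in[p]}C_s$ is a partition into blocks of size $q-1$. For a part $i$ and a coordinate $h=(c-1)p+s\in C_s$, let $\psi_s(i,h)$ be the unique vertex $j\in[q]\setminus\{i\}$ with $\chi(V_iV_j)=c$ (it exists because $\chi$ restricts to a bijection from the edges at $V_i$ onto $[q-1]$); then $(i,h)\mapsto (i,\psi_s(i,h))$ is a bijection from $\{(i,h):i\in[q],\,h\in C_s\}$ onto the ordered pairs of distinct elements of $[q]$. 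Unwinding the definition of $(i,i')$-relatedness, for $h$ read off a vertex of $V_i$ and $h'$ off a vertex of $V_{i'}$ with $i\neq i'$, the pair $(h,h')$ is $(i,i')$-related exactly when $h,h'$ lie in a common $C_s$ and $\psi_s(i,h)=\psi_s(i',h')$ (the extra requirement that this common vertex avoid $\{i,i'\}$ being automatic, as $\psi_s(i,h)\neq i$ and $\psi_s(i',h')\neq i'$). Hence, fixing a ``target'' $j\in[q]$, the obstruction forbids two distinct parts $i,i'\in[q]\setminus\{j\}$ from both having their unique $C_s$-coordinate pointing at $j$ be lengthy, so at most one part contributes for each target $j$. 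This gives $\sum_{i\in[q]}|L_i([\ell])\cap C_s|\le q$, and summing over $s\in[p]$ yields $\sum_{i\in[q]}|L_i([\ell])\cap C|\le pq$.

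Adding the two estimates gives $\sum_{i\in[q]}|L_i([\ell])|\le pq+\bigl(\ell-p(q-1)\bigr)=\ell+p$, as required. The step I expect to be the crux is the bookkeeping in the cross case: a single coordinate index may be lengthy for many parts at once, so a naive ``$|[\ell]|\times q$'' bound is hopeless, and the real content is that two lengthy instances interact (through the Rhombus lemma) only when they point at a common third vertex $j$ --- it is the map $\psi_s$ that isolates this ``target-vertex'' pairing and produces the sharp count $q$ per residue block. A minor point to handle with care is that coordinates coming from different blocks $S_h$ must all be viewed inside one copy of $\sS^k(\sR)$ for Theorem~\ref{thm-k4} to apply, which is exactly how the cross-edge distances are defined.
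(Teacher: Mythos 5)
Your proof is correct and rests on exactly the same two pillars as the paper's: the Bollob\'as--Erd\H os Rhombus lemma forbids two $(i,i')$-related lengthy coordinates from coexisting, and the edge-colouring structure turns this into a count of at most $pq$ on the head coordinates $[p(q-1)]$ plus $\ell-p(q-1)$ on the tail. The only difference is a dual bookkeeping of the head coordinates: you partition them by residue block $C_s$ and bound each $\sum_i|L_i\cap C_s|\le q$ by observing that at most one part may point at any given target vertex $j$, whereas the paper fixes the target $j$ first and bounds $\sum_{i\ne j}|L_i([(c_{ij}-1)p+1,c_{ij}p])|\le p$ by the Pigeonhole Principle over $s$ --- the two slicings of the same $(j,s)$-grid give the same total.
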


\begin{proof}
	We claim that
	\begin{align*}
	\sum_{i \in [q]}|L_i([\ell])| &= \sum_{j \in [q]}\sum_{i \in [q]\setminus\{ j\}}|L_i([(c_{ij}-1)p+1,c_{ij}p])|+ \sum_{i \in [q]}|L_i([p(q-1)+1,\ell])|.
	\end{align*}
	Indeed, note that for each $i\in[q]$, $\bigcup_{j\in [q]\setminus\{i\}}[(c_{ij}-1)p+1,c_{ij}p]$ is a partition of $[p(q-1)]$, as $\chi$ is a proper $(q-1)$-edge-colouring of a $q$-clique. Thus the contribution to the left-hand side from $[p(q-1)]$ is $\sum_{i \in [q]}\sum_{j \in [q]\setminus\{ i\}}|L_i([(c_{ij}-1)p+1,c_{ij}p])|$. Swapping the summations proves the claim.
	
	Suppose for the sake of contradiction that at least one of the following holds:
	\begin{itemize}
		\item $\sum_{i \in [q]\setminus\{j\}}|L_i([(c_{ij}-1)p+1,c_{ij}p])| \geq p+1,\quad\text{for some } j \in [q]$;
		\item $\sum_{i \in [q]}|L_i([p(q-1)+1,\ell])| \geq \ell-p(q-1)+1$.
	\end{itemize}
	We claim that, in both cases, there are distinct $i,i' \in [q]$ and (not necessarily distinct) $h,h'\in[\ell]$ such that
	\begin{itemize}
		\item[(i)] $h\in L_i([\ell])$ and $h' \in L_{i'}([\ell])$;
		\item[(ii)] $(h,h')$ are $(i,i')$-related.
	\end{itemize}
	
	To see this, in the first case, by the Pigeonhole Principle, there are distinct $i,i' \in [q]\setminus\lbrace j\rbrace$ and $s \in [p]$ such that $h:=(c_{ij}-1)p+s$ and $h':=(c_{i'j}-1)p+s$ are lengthy for $V_i \cap A$ and $V_{i'} \cap A$ respectively. By definition, $(h,h')$ are $(i,i')$-related.
	
	In the second, again by the Pigeonhole Principle, some $h\in [p(q-1)+1,\ell]$ falls in $L_i([\ell])$ and $L_{i'}([\ell])$ for some distinct $i,i'$. It remains to recall that for any $h$ in this interval, $(h,h)$ is $(i,i')$-related for any distinct $i,i'$.
	
	Due to (i) above, there exist two pairs of vertices $\V{v}^{(1)},\V{v}^{(2)}\in V_{i}\cap A$ and $\V{u}^{(1)},\V{u}^{(2)}\in V_{i'}\cap A$ such that $|v^{(1)}_{h} - v^{(2)}_{h}|\ge 2-\mu$ and $|u^{(1)}_{h'} - u^{(2)}_{h'}|\ge 2-\mu$. At the same time, by (ii) and that $\V{u}^{(\ell)}\V{v}^{(\ell')} \in E(G)$ whenever $\ell,\ell' \in [2]$, we see that 
	$|v^{(\ell)}_h-u^{(\ell')}_{h'}| \leq \sqrt{2}-\mu$ for $\ell,\ell' \in [2]$.
	Therefore, we have four points $v^{(1)}_{h},v^{(2)}_{h},u^{(1)}_{h'},u^{(2)}_{h'}\in \sS^k(\mathbb{R})$ which contradict Theorem~\ref{thm-k4}.
	
	Thus the required sum is at most $pq+\ell-p(q-1) = \ell+p$.
\end{proof}

Let $x_i := |L_i([\ell])|$ for each $i \in [q]$.
By Lemmas~\ref{lem-cliquesize} and~\ref{lem-allgoodcoord}, we have that
\begin{align*}
|A|\le \sum_{i\in [q]}2^{x_i}\quad \text{subject to }x_1+\ldots+x_q \leq \ell +p \text{ and every }x_i \leq \ell.
\end{align*}
Optimising, we see that the maximum is attained by setting $x_i:=\ell$ and $x_{i'}:=p$ for some distinct $i,i' \in [q]$, and setting all others equal to $0$.
Thus the clique number of $G$ is
$$
\omega(G) \leq 2^\ell+2^p+q-2.
$$
This completes the proof of~\ref{it:MBE-clique} and hence of Theorem~\ref{thm-2tri-exact}.

\section{Upper bounds}\label{sec-UB}

In this section we will show that one can find upper bounds for $\varrho_p(q)$ by considering a cleaner problem on weighted graphs. First, in Section~\ref{sec-reduction-weighted-graph}, we introduce a family of weighted graphs $\widetilde{\cG}_p(q)$ and show that a certain weighted Tur\'an-type density $\pi(\widetilde{\cG}_p(q))$ bounds $\varrho_p(q)$ from above; see Lemma~\ref{reduce}. In order to bound $\pi(\widetilde{\cG}_p(q))$, we prove in Section~\ref{sec-embedding} an embedding lemma to study the structure of the edge weights of weighted graphs in a simpler subfamily $\cG_p(q) \subseteq \widetilde{\cG}_p(q)$; see Lemma~\ref{embedding}. From this structural information, Theorem~\ref{thm-upper-easy} then follows fairly easily; see Section~\ref{sec-upper-easy}.
The bulk of the work then is devoted to deriving the upper bound for $\pi(\widetilde{\cG}_p(q))$ when $p=\{3,4\}$; see Section~\ref{sec-upper-main}.


\subsection{Reduction to weighted graphs}\label{sec-reduction-weighted-graph}

Given $p\in\mathbb{N}$, a \emph{$p$-weighted graph} is a pair $G=(V,w)$ consisting of a vertex set $V$
and a symmetric function $w : V^2 \rightarrow \lbrace 0,1,\ldots, p\rbrace$ such that
$w(x,x)=0$ for all $x \in V$.
Given $x \in V$, let $d_G(x) := \sum_{y \in V-x}w(x,y)$ and $\delta(G) := \min_{x \in V}d_G(x)$. A $p$-weighted graph is \emph{positive} if all its pairs $x \neq y$ receive strictly positive weights.
Given a family $\mathcal{G}$ of $p$-weighted graphs and a $p$-weighted graph $G=(V,w)$, we say that $G$ is \emph{$\mathcal{G}$-free} if there is no $U \subseteq V$ such that $G[U] \in \mathcal{G}$, where
$G[U] := (U,w|_{U^2})$.

We will use Szemer\'edi's regularity lemma so need to define the associated notions.
Given $\eps>0$, a bipartite graph with vertex bipartition $A,B$ (or the pair $(A,B)$) is said to be \emph{$\eps$-regular} if, for all $A' \subseteq A$ and $B' \subseteq B$ with $|A'| \geq \eps|A|$ and $|B'| \geq \eps|B|$, we have $|d_G(A',B')-d_G(A,B)| \leq \eps$.
If additionally $d_G(A,B) \geq d$, then we say that $G$ is \emph{$(\eps,\geq\!d)$-regular}.

\begin{theorem}[Regularity lemma]\label{reg}
	For every $\eps>0$ and integer $M'$ there exist integers $M,n_0$ such that if $G$ is a graph on $n \geq n_0$ vertices, then there is a partition of $V(G)$ into $V_0,V_1,\ldots,V_m$ for some $M' \leq m \leq M$ so that $|V_0| \leq \eps n$; $|V_1|=\ldots=|V_m|=:n'$ and for each $i\in[m]$, $G[V_i,V_j]$ is $\eps$-regular for all but at most $\eps m$ pairs $(i,j)$.
\end{theorem}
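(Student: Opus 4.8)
The plan is to prove this by the classical \emph{energy-increment} (mean-square index) argument of Szemer\'edi. To any partition $\mathcal{P}=\{W_1,\dots,W_k\}$ of $V(G)$, with $n=|V(G)|$, I would associate the quantity
\[
q(\mathcal{P}) \;:=\; \sum_{i,j=1}^{k}\frac{|W_i|\,|W_j|}{n^2}\,d_G(W_i,W_j)^2 \;\in\;[0,1].
\]
Two facts, both instances of (defect) Cauchy--Schwarz, do all the work. First, if $\mathcal{P}'$ refines $\mathcal{P}$ then $q(\mathcal{P}')\ge q(\mathcal{P})$. Second, if a pair $(W_i,W_j)$ is \emph{not} $\eps$-regular, as witnessed by subsets $A'\subseteq W_i$, $B'\subseteq W_j$ with $|A'|\ge\eps|W_i|$, $|B'|\ge\eps|W_j|$ and $|d_G(A',B')-d_G(W_i,W_j)|>\eps$, then refining $W_i$ to $\{A',W_i\setminus A'\}$ and $W_j$ to $\{B',W_j\setminus B'\}$ increases the contribution of this block to $q$ by more than $\eps^4\cdot\frac{|W_i|\,|W_j|}{n^2}$.

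The iteration then runs as follows. I would start from an arbitrary partition $\mathcal{P}_0$ into $M'$ parts of sizes differing by at most one. Given $\mathcal{P}_s$ with $m_s$ parts: if all but at most an $\eps$-fraction of the pairs $(i,j)$ are $\eps$-regular, stop; otherwise pick, for each irregular pair, a witnessing pair of subsets, and let $\mathcal{P}_{s+1}$ be the common refinement of all these bipartitions. Summing the second fact over the at least $\eps m_s^2$ irregular pairs yields $q(\mathcal{P}_{s+1})\ge q(\mathcal{P}_s)+\eps^5$ (up to a negligible correction coming from the tiny exceptional part), and since $q\le 1$ the process terminates after at most $\eps^{-5}$ rounds. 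Each round replaces a part by at most $2^{m_s}$ parts, so $m_{s+1}\le m_s\,2^{m_s}$, whence the final number of parts is bounded by a tower function of $\eps^{-5}$ with base roughly $M'$; this defines $M$, and taking $n_0$ large as a function of $M$ and $\eps$ keeps every part non-empty throughout.

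Finally I would pass to the \emph{equitable} form in the statement: from the terminal partition, move at most $m$ leftover vertices from each part into an exceptional class $V_0$ so that $V_1,\dots,V_m$ share the common size $n'=\lfloor(n-|V_0|)/m\rfloor$; since $m\le M$ is bounded and $n\ge n_0$ is large, $|V_0|\le\eps n$, and deleting at most an $\eps$-fraction of vertices from a class changes densities by at most $O(\eps)$, so $\eps$-regularity survives after running the whole argument with a slightly smaller regularity parameter to absorb the loss. To obtain the precise ``for each $i$, all but at most $\eps m$ pairs $(i,j)$'' form, one runs the argument with parameter $\eps^2$ (getting $\le\eps^2 m^2$ irregular pairs total) and then merges or discards into $V_0$ the $\le\eps m$ indices $i$ that are exceptional, a harmless further adjustment. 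I expect the genuine technical core to be the defect Cauchy--Schwarz estimate powering the energy increment; the main bookkeeping nuisance is controlling the tower-type explosion in the number of parts while simultaneously realising the equipartition with a small exceptional set.
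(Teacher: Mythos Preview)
The paper does not prove this theorem: it is simply stated as Szemer\'edi's Regularity Lemma and used as a black box in the proof of Lemma~\ref{reduce}. There is therefore no ``paper's own proof'' to compare against.

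Your sketch is the standard energy-increment proof and is essentially correct in outline. Two points deserve a little more care if you were to write it out in full. First, the passage to the equitable form is more delicate than ``move at most $m$ leftover vertices from each part'': the terminal partition from the raw iteration can have wildly unequal part sizes (each refinement step may split parts very unevenly), so one usually either maintains an equipartition throughout the iteration by chopping each refined cell into pieces of a common size and absorbing remainders into $V_0$, or one performs a more careful rebalancing at the end. Second, your device for upgrading the global bound ($\le \eps m^2$ irregular pairs in total) to the per-vertex bound (for each $i$, at most $\eps m$ bad $j$) by running with parameter $\eps^2$ and discarding the $\le \eps m$ bad indices into $V_0$ is fine in spirit, but note that discarding up to $\eps m$ whole classes can move a constant fraction of vertices into $V_0$, so you must check that this still leaves $|V_0|\le \eps n$ after adjusting constants. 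None of this is a genuine obstacle; it is the usual bookkeeping.
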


We first define a family $\widetilde{\cG}_p(q)$ of $p$-weighted graphs which arise from regularity partitions.
Informally, $\widetilde{\cG}_p(q)$ contains all positive $p$-weighted graphs such that any of their pseudorandom blow-ups with sublinear $p$-independence number contain a $K_q$. 

\begin{definition}\label{def:Gpq}
	~
	
	\vspace{-0.3cm}
	\begin{itemize}
		\item Given a positive $p$-weighted graph $G=(V,w)$ and $\gamma,\zeta,\eta>0$, we say that an extension of $w$ to $V^2 \cup V$ (i.e.~to also include a vertex weighting $\lbrace w(v) : v \in V\rbrace$) taking values in $\lbrace 1,\ldots,p\rbrace$ is \emph{valid wrt $\gamma,\zeta,\eta$} if there exists $n_0(\gamma,\zeta,\eta)>0$ such that the following holds for all integers $n \geq n_0$:
		\begin{itemize}
			\item[] 
			Let $H = (W,E)$ be an $n$-vertex graph such that
			there is a vertex partition $W=\bigcup_{v \in V}W_v$ with $|W_v| \geq \eta n$ and $\alpha_p(H[W_v])\le \gamma|W_v|$ for all $v \in V$, and
			$H[W_u,W_v]$ is $(\zeta,\geq\!\frac{w(u,v)-1}{p}+\eta)$-regular for all $uv\in{V\choose 2}$.
			Then $H$ contains a clique of size $\sum_{v \in V}w(v)$.
		\end{itemize}
		
		\item Let $\widetilde{\mathcal{G}}_p(q)$ be the class of positive $p$-weighted graphs $(V,w)$ with $|V| \leq q$ such that for all $\eta>0$ there exist $\zeta,\gamma>0$ and a valid vertex weighting wrt $\gamma,\zeta,\eta$ with $\sum_{v \in V}w(v) \geq q$.
		\item Given positive integers $p \leq q$, let
		\begin{align*}
			\pi(\widetilde{\cG}_p(q)):=\sup\{d \in [0,1]:~&\text{ every sufficiently large } p\text{-weighted }G \text{ with }\\
			&\delta(G) > dp|V| \text{ is } \widetilde{\cG}_p(q)\text{-free} \}.
		\end{align*} 
	\end{itemize}
\end{definition}
For example, it is not hard to see that any positive $p$-weighted graph on two vertices is in $\widetilde{\cG}_p(p+1)$. 
Indeed, let $H$ be an $n$-vertex graph as in the definition, so $H$ is a regular pair $(A,B)$. Choose a typical vertex $v \in A$; by positivity, the density of $H$ is at least $\eta$, so $d_G(v,B) \geq \eta|B| > \gamma|B|$.
Now there is a copy of $K_p$ in the neighbourhood of $v$ in $B$, so $K_{p+1} \subseteq H$.
Note that $\widetilde{\cG}_p(q)$ is a finite family, so $\gamma,\zeta$ may be chosen uniformly.


The goal of this section is to prove the following  lemma, which allows us to upper bound $\varrho_p(q)$ by $\pi(\widetilde{\cG}_p(q))$. 
This lemma is related to several theorems in~\cite{EHSSSz} and its proof follows the same approach, using Szemer\'edi's regularity lemma.

\begin{lemma}\label{reduce}
	For all positive integers $p \leq q$ we have $\varrho_p(q) \leq \pi(\widetilde{\cG}_p(q))$.
	
	\medskip
	In other words:
	For all $\delta>0$ and $p \leq q \in \mathbb{N}$, let $d\ge \pi(\widetilde{\cG}_p(q))\in [0,1]$. That is, for every $q \geq p$,
	every sufficiently large $p$-weighted graph $G=(V,w)$ with $\delta(G) > dp|V|$ has a subset $U \subseteq V$ such that $G[U]$ lies in $\widetilde{\cG}_p(q)$. 
	Then there exists $\eps>0$ such that whenever $n_0=n_0(\eps)$ is sufficiently large,
	every graph $H$ on $n \geq n_0$ vertices with $e(H) \geq (d+\delta)\binom{n}{2}$ edges and $\alpha_p(H) \leq \eps n$ contains a copy of $K_q$.
\end{lemma}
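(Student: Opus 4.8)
The plan is to apply Szemer\'edi's regularity lemma to $H$, record the reduced picture as a $p$-weighted graph $G$, pass to an induced subgraph of $G$ with large minimum weighted degree, invoke the hypothesis (in the restated form) to find a vertex subset $U$ with $G[U]\in\widetilde{\cG}_p(q)$, and finally use a valid vertex weighting of $G[U]$ to locate a $K_q$ inside the union of the corresponding clusters of $H$. The constants are chosen in the hierarchy $\eta\ll\zeta,\gamma\ll\eps'\ll 1/M\ll\eps\ll 1/n$. First pick $\eta:=\min\{1/q,\,\delta/(16pq)\}$; the crucial feature is that $\eta\le 1/q$, i.e.\ $\eta$ is at most the reciprocal of the size of any member of the \emph{finite} family $\widetilde{\cG}_p(q)$. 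Using the definition of $\widetilde{\cG}_p(q)$ and finiteness, obtain $\zeta,\gamma>0$ that work simultaneously for all members: for every $(V',w')\in\widetilde{\cG}_p(q)$ there is a vertex weighting, valid with respect to $\gamma,\zeta,\eta$, with total weight at least $q$ (decreasing $\gamma$ or $\zeta$ only strengthens the hypotheses in the definition of validity, so a common value may be taken). Then apply the regularity lemma with error parameter $\eps':=\min\{\zeta,\delta/200\}$ and a large lower bound $M'$ on the number of parts (pinned down below), obtaining $M$; finally set $\eps:=\gamma/(2M)$.

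Given $H$ with $e(H)\ge(d+\delta)\binom n2$ and $\alpha_p(H)\le\eps n$ on $n$ large, regularise to get clusters $V_0,V_1,\dots,V_m$ with $M'\le m\le M$, and build a reduced $p$-weighted graph $G$ on $[m]$ by setting $w(i,j):=0$ if $(V_i,V_j)$ is $\eps'$-irregular or $d_H(V_i,V_j)<\eta$, and otherwise letting $w(i,j)\in[p]$ be the largest integer with $\frac{w(i,j)-1}{p}+\eta\le d_H(V_i,V_j)$. By design every pair with $w(i,j)\ge 1$ is $(\zeta,\ge\frac{w(i,j)-1}{p}+\eta)$-regular, while $w(i,j)>p\bigl(d_H(V_i,V_j)-\eta\bigr)$. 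A routine count — discarding $V_0$, the within-cluster edges, the at most $\eps'\binom m2$ irregular pairs (each losing at most $1$ in density) and the sparse pairs (each losing less than $\eta$), then dividing by $n'^2$ — gives $\sum_{ij}w(i,j)>p\bigl(d+\tfrac\delta2\bigr)\binom m2$ once $M'$ is large and $\eps',\eta$ small. Next I would pass from average to minimum weighted degree by repeatedly deleting a vertex whose current weighted degree is at most $pd$ times the current order: the potential $f(U):=\sum w(G[U])-pd\binom{|U|+1}{2}$ does not decrease under such a deletion, $f([m])\ge\frac{p\delta}{8}m^2$, and always $f(U)\le p\binom{|U|}{2}$, so the process halts at some $U'$ with $|U'|\ge\frac{\sqrt\delta}{2}m\ge\frac{\sqrt\delta}{2}M'$ and $\delta(G[U'])>pd\,|U'|$. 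Choosing $M'\ge 2m_0/\sqrt\delta$, where $m_0$ is the ``sufficiently large'' threshold in the hypothesis, $G[U']$ is a sufficiently large $p$-weighted graph with $\delta(G[U'])>pd\,|V(G[U'])|$, so the hypothesis yields $U\subseteq U'$ with $G[U]\in\widetilde{\cG}_p(q)$; in particular $|U|\le q$ and $G[U]$ is positive. Finally, apply the validity of the chosen vertex weighting of $G[U]$ (with respect to $\gamma,\zeta,\eta$) to $\mathcal H:=H\bigl[\bigcup_{v\in U}V_v\bigr]$ with parts $\mathcal W_v:=V_v$: here $|\mathcal W_v|=|\mathcal H|/|U|\ge\eta\,|\mathcal H|$ since $\eta\le 1/q$; $\alpha_p(\mathcal H[\mathcal W_v])\le\alpha_p(H)\le\eps n\le\gamma n'=\gamma\,|\mathcal W_v|$ by the choice of $\eps$; and $\mathcal H[\mathcal W_u,\mathcal W_v]=H[V_u,V_v]$ has the required $(\zeta,\ge\frac{w(u,v)-1}{p}+\eta)$-regularity for all $uv\in\binom U2$, since $G[U]$ is positive. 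Validity then produces a clique of size $\sum_{v\in U}w(v)\ge q$ in $\mathcal H\subseteq H$, as required.

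The step I expect to be the main obstacle is not any single inequality but the quantifier bookkeeping forced by the ``for all $\eta>0$'' in the definition of $\widetilde{\cG}_p(q)$: since $\eta$ simultaneously serves as a lower bound on the relative sizes of the clusters appearing in a valid blow-up, one must commit to $\eta\le 1/q$ (hence to $\zeta,\gamma$, hence to $\eps'$, hence to $M$, hence to $\eps$) \emph{before} the reduced graph is built, and one may only invoke validity after restricting to the $\le q$ clusters indexed by $U$, where the relative cluster size is automatically $\ge 1/q\ge\eta$. Making this dependency chain close without circularity, while absorbing all the $o(1)$ and $O(\eps')$ error terms from the density count into the slack $\tfrac\delta2$, is where the care lies; the regularity-lemma bookkeeping and the minimum-degree extraction are otherwise standard and parallel the arguments in~\cite{EHSSSz}.
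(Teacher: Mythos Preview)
Your proof is correct and follows the paper's approach: regularise $H$, encode the reduced picture as a $p$-weighted graph, find a copy of a member of $\widetilde{\cG}_p(q)$ inside it, and invoke the valid vertex weighting to embed $K_q$. The one genuine difference is the order in which minimum degree is enforced: the paper first replaces $H$ by a subgraph with minimum degree at least $(d+\delta/2)n$ and \emph{then} regularises, so that every vertex of the reduced weighted graph already has weighted degree exceeding $p(d+\delta/5)m$; you instead regularise $H$ as given, obtain only large \emph{average} weighted degree in the reduced graph, and then run the vertex-deletion potential argument there. Both orderings are standard and equivalent here; your version trades the paper's slightly cleaner per-vertex density count for a short potential-function extraction on the reduced graph. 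One cosmetic slip: your displayed hierarchy $\eta\ll\zeta,\gamma\ll\eps'\ll 1/M\ll\eps\ll 1/n$ is written in reverse --- the explicit choices you give (correctly) make $1/n$ the smallest and $\eta$ the largest among these constants.
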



\begin{proof}
	Let $\delta>0$ and $p \leq q \in \mathbb{N}$, and $d\ge \pi(\widetilde{\cG}_p(q))$. Let $\eta$ be such that $0 < \eta \ll \delta \ll 1/q$, where we have decreased $\delta$ if necessary (doing so will only prove a stronger result). 
	Let $\zeta,\gamma>0$ be such that every $G \in \widetilde{\cG}_p(q)$ has a valid vertex weighting wrt $\gamma,\zeta,\eta$ with $\sum_{v \in V}w(v) \geq q$.
	Since a vertex weighting which is valid wrt $\gamma,\zeta,\eta$ is also valid wrt $\gamma',\zeta',\eta$ whenever $\gamma' \leq \gamma$ and $\zeta' \leq \zeta$, by decreasing $\gamma,\zeta$ if necessary, we may assume that $0 < \gamma \ll \zeta \ll \eta$.
	Choose an additional parameter $M'$ such that
	$0 < \gamma \ll 1/M' \ll \zeta \ll \eta \ll \delta$, and any $p$-weighted graph on at least $M'$ vertices is sufficiently large in the sense of the definition of $\pi(\widetilde{\cG}_p(q))$.
	Apply Theorem~\ref{reg} (the Regularity Lemma) to $\zeta,M'$ to obtain $M,n_1$.
	By increasing $M,n_1$ and decreasing $\gamma$ if necessary, we may assume that $1/n_1 \ll \gamma \ll 1/M \ll 1/M'$
	and that $n_1 \geq (n_0(\gamma,\zeta,\eta))^2$ from Definition~\ref{def:Gpq}.
	Altogether we have
	$$
	0 < 1/n_1 \ll \gamma \ll 1/M \ll 1/M' \ll \zeta \ll \eta \ll \delta \ll 1/q \leq 1/p.
	$$
	The choice of $d$ implies that for every sufficiently large $p$-weighted graph $G=(V,w)$ with $\delta(G)>dp|V|$, there exists $U \subseteq V$ such that $G[U]$ is positive and a valid vertex weighting wrt $\gamma,\zeta,\eta$ of $U$ with $\sum_{u \in U}w(u) \geq q$.
	Let $H'$ be a graph on $n' \geq n_1$ vertices with $e(H') \geq (d+\delta)\binom{n'}{2}$ and  $\alpha_p(H')\le \gamma^2 n'$. To prove the lemma, we will show that $H' \supseteq K_{q}$ (so $\eps:=\gamma^2$). Using a standard trick of repeatedly removing low degree vertices, we can pass from $H'$ to an $n$-vertex subgraph $H$ with $n\ge \de^{1/4}n'\ge n_0(\gamma,\zeta,\eta)$ and $\de(H)\ge (d+\frac{\de}{2})n$.
	
	Apply the Regularity Lemma to $H$ with parameters $\zeta,M'$
	to obtain a partition $V_0 \cup V_1 \cup \ldots \cup V_m$ of its vertex set where $M' \leq m \leq M$ satisfying the conclusions of Theorem~\ref{reg}.
	Let $A = (a_{ii'})$ be the symmetric $m\times m$ matrix in which
	$$
	a_{ii'} := 
	\begin{cases}
		\lfloor p(d_H(V_i,V_{i'})-\eta)\rfloor+1, \quad\mbox{if } H[V_i,V_{i'}] \text{ is } (\zeta,\geq\!\frac{\delta}{4})\text{-regular};\\
		0, \quad\mbox{ otherwise.} 
	\end{cases}
	$$
	So $A$ has entries in $\{0,\ldots,p\}$.
	Let $G=(V,w)$ be the $p$-weighted graph with $V = \{v_1,\ldots,v_m\}$ and $w(v_i,v_{i'}) := a_{ii'}$.
	Write $d_{ii'} := d_H(V_i,V_{i'})$.
	Note that if $a_{ii'}$ is positive, then $a_{ii'}$ equals $k+1$ if and only if $\frac{k}{p}+\eta \leq d_{ii'} < \frac{k+1}{p}+\eta$.
	Thus $a_{ii'}\geq p(d_{ii'}-\eta) \geq (1-\sqrt{\eta})pd_{ii'}$, since $d_{ii'} \geq \frac{\delta}{4}$.
	Standard results on the `reduced graph' of $H$ imply that for every $i \in [m]$, the sum of $d_{ii'}$ over all $V_{i'}$ such that $(V_i,V_{i'})$ is $(\zeta,\geq\!\frac{\delta}{4})$-regular is at least $\delta(H)\cdot\frac{m}{n}-\frac{\delta}{4}\cdot  m$.
	Thus for all $v_i \in V$,
	\begin{align}\label{eq:gA}
		d_G(v_i) = \sum_{i' \in [m]\setminus\{i\}}a_{ii'} \geq (1-\sqrt{\eta})\sum_{i' \in [m]\setminus\{i\}}pd_{ii'} \geq (1-\sqrt{\eta})pm\left(d+\frac{\delta}{4}\right) \geq p\left(d+\frac{\delta}{5}\right)m.
	\end{align}
	So by our choice of $d$, there exists $U \subseteq V$ such that $G[U]$ is positive and a valid vertex weighting wrt $\gamma,\zeta,\eta$ of $U$ with $\sum_{v \in U}w(v) \geq q$.
	Let $I \subseteq [m]$ be such that $U = \{v_i: i \in I\}$ and let $H_I$ be the subgraph of $H$ induced by $\{V_i: i \in I\}$.
	Then for all $i \in I$ we have $\alpha_p(H_I[V_i])\le \alpha_p(H') \leq \gamma^2 n'\le  (\gamma^2 \de^{-1/4}\cdot 2M)\frac{n}{2M}\leq  \gamma|V_i|$.
	Moreover, for distinct $i,i' \in I$, $H_I[V_i,V_{i'}] = H[V_i,V_{i'}]$ is $(\zeta,\geq\!\frac{\delta}{4})$-regular (since $a_{ii'}>0$).
	Also, $w(v_i,v_{i'})=a_{ii'}=\lfloor p(d_{ii'}-\eta)\rfloor + 1$ so $d_{ii'}\geq \frac{w(v_i,v_{i'})-1}{p}+\eta$.
	By the definition of a valid vertex weighting, $K_q \subseteq H_I\subseteq H'$.
	
	Thus $\rt_p(n,K_q,\gamma^2 n') \leq (d+\delta)\binom{n'}{2}$ for all $n' \geq n_1$ and it follows that $\varrho_p(q) \leq \pi(\widetilde{\cG}_p(q))$.
\end{proof}

\subsection{An embedding lemma via dominating extensions}\label{sec-embedding}
To upper bound $\pi(\widetilde{\cG}_p(q))$, we will consider only valid vertex weightings with a particular property.
\begin{definition}
	Let $G=(V,w)$ be a positive $p$-weighted graph and $\lbrace v_1,\ldots,v_m\rbrace$ be an enumeration of $V$. An extension of $w$ to $V^2 \cup V$ is \emph{dominating} if  for all $j \in \{2,\ldots,m\}$, writing $a:=w(v_j)$, the multiset of backwards edge weights $\lbrace w(v_i,v_j) : i \in [j-1]\rbrace$ dominates
	$$
	\Big\lbrace \underbrace{\textstyle{\frac{p(a-1)}{a}+1,\ldots,\frac{p(a-1)}{a}+1}}_{j-2}, a\Big\rbrace
	$$
	as ordered multisets.
	The \emph{size} of an extension of $w$ to $V^2 \cup V$ is $\sum_{v \in V}w(v)$. 
	Let $\mathcal{G}_p(q)$ be the set of positive $p$-weighted graphs $G=(V,w)$ with dominating extension of size at least $q$.
\end{definition}

For example, $\{3,4,4\}$ dominates $\{3,3,4\}$ but not $\{2,2,5\}$.
There is no constraint on the weight of $v_1$, so we can always choose $w(v_1)=p$.
Note that, writing $t := \max_{uv \in \binom{V}{2}}w(u,v)$, we have that $\{p,t,1,\ldots, 1\}$ is dominating.
Indeed, enumerate $V$ so that $w(v_1,v_2)=t$ is maximal and write $a_j := w(v_j)$ for all $j \in [m]$.
The multisets of backwards edge weights are $\{t\},\{w(v_1,v_3),w(v_2,v_3)\},\{w(v_1,v_4),w(v_2,v_4),w(v_3,v_4)\}\ldots$, which, respectively,
dominate
$$
\{a_2\},\left\{\frac{p(a_3-1)}{a_3}+1, a_3\right\},\left\{\frac{p(a_4-1)}{a_4}+1,\frac{p(a_4-1)}{a_4}+1,a_4\right\},\ldots = \{t\},\{1,1\},\{1,1,1\},\ldots,
$$
as required. 
Thus we have
\begin{align}\label{eq:mq}
	&G \in \cG_p(p+t+m-2), \quad\forall\text{ positive } p\text{-weighted } m\text{-vertex } G \text{ with an edge of weight}\geq t.
\end{align}
We consider dominating extensions due to the following averaging claim.
\begin{claim}\label{cl:int}
	Let $a \leq p$ be positive integers, let $\eta > 0$ and let $Y$ be a set.
	Given sets $A_1,\ldots,A_p \subseteq Y$ with $|A_i| \geq (\frac{a-1}{p}+\eta)|Y|$, there is some $I \subseteq [p]$ with $|I|=a$ such that $|\bigcap_{i \in I}A_i| \geq p^{-a}\eta|Y|$.
\end{claim}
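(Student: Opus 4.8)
The plan is a short double-counting argument finished off by pigeonhole. First I would introduce the multiplicity function: for $y \in Y$ put $d(y) := |\{i \in [p] : y \in A_i\}|$. Summing the hypothesis $|A_i| \ge (\tfrac{a-1}{p}+\eta)|Y|$ over $i \in [p]$ gives
\[
\sum_{y \in Y} d(y) \;=\; \sum_{i \in [p]} |A_i| \;\ge\; (a - 1 + p\eta)\,|Y|,
\]
so on average each point of $Y$ lies in strictly more than $a-1$ of the sets, and the total excess over the threshold $a-1$ is at least $p\eta|Y|$.

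Next I would localise this excess. Set $Z := \{ y \in Y : d(y) \ge a \}$. Using $d(y) \le a-1$ for $y \notin Z$ and $d(y) \le p$ for $y \in Z$, one gets $\sum_{y} d(y) \le (a-1)|Y| + (p - a + 1)|Z|$; comparing this with the displayed lower bound yields $(p-a+1)|Z| \ge p\eta|Y|$, hence
\[
|Z| \;\ge\; \frac{p}{p-a+1}\,\eta\,|Y| \;\ge\; \eta\,|Y|,
\]
since $1 \le p-a+1 \le p$. Finally, to each $y \in Z$ associate an $a$-element subset $I(y) \subseteq [p]$ with $y \in \bigcap_{i \in I(y)} A_i$, which exists because $y$ lies in at least $a$ of the sets $A_i$. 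There are only $\binom{p}{a} \le p^a$ possible values of $I(y)$, so by the pigeonhole principle some fixed $a$-set $I$ satisfies $|\{ y \in Z : I(y) = I \}| \ge |Z|/\binom{p}{a} \ge \eta|Y|/p^a$. Every such $y$ lies in $\bigcap_{i \in I} A_i$, so $|\bigcap_{i \in I} A_i| \ge p^{-a}\eta|Y|$, as claimed.

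I do not anticipate any genuine obstacle here: the argument is entirely elementary, and the only points requiring (minor) care are the inequalities $\binom{p}{a} \le p^a$, valid for all $1 \le a \le p$, and $p - a + 1 \le p$, used to absorb the constants cleanly. If one preferred a slicker phrasing, the middle step could be replaced by Markov's inequality applied to $(a-1) - \min(d(y), a-1)$, but the counting version above is the most transparent.
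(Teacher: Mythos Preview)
Your proof is correct and essentially the same as the paper's: both arguments establish that the set $Z=\{y:d(y)\ge a\}=\bigcup_{|I|=a}\bigcap_{i\in I}A_i$ satisfies $(p-a+1)|Z|\ge p\eta|Y|$ and then pigeonhole over the $\binom{p}{a}\le p^a$ index sets. The only difference is cosmetic---you phrase the first step via the multiplicity function $d(y)$, while the paper packages the same double count as a single inequality for $a$-wise intersections.
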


This will imply that among $p$ typical vertices in one part $A$ of a regular pair $(A,B)$ of density at least $\frac{a-1}{p}+\eta$, there are $a$ of them which share a large common neighbourhood in $B$.
We will use this to extend a clique by $a$ vertices in a new regularity cluster.

\begin{poc}
	Note that the following lower bound for $a$-wise intersections holds:
	$$
	\textstyle{(p-(a-1))\left|\bigcup_{I \subseteq [p]:|I|=a}\bigcap_{i \in I}A_i\right| \geq \sum_{i \in [p]}|A_i|-(a-1)|\bigcup_{i \in [p]}A_i|}.
	$$
	Indeed, the left-hand side counts every element in an $\ell$-wise intersection $0$ times for $\ell\leq a-1$ and $p-(a-1)$ times for $\ell \geq a$, while the right-hand side counts every element in an $\ell$-wise intersection $\ell-(a-1)$ times.
	As the right-hand side is at least $p(\frac{a-1}{p}+\eta)|Y|-(a-1)|Y|=p\eta|Y|$, we see that there is some $I \subseteq [p]$ with $|I|=a$ such that $|\bigcap_{i \in I}A_i| \geq \binom{p}{a}^{-1}(p-(a-1))^{-1}p\eta|Y| \geq p^{-a}\eta|Y|$.
\end{poc}

\begin{lemma}\label{embedding}
	For all positive integers $p \leq q$, $\mathcal{G}_p(q) \subseteq \widetilde{\mathcal{G}}_p(q)$.
	
	\medskip
	This is a consequence of the following statement:
	Let $p,m$ be integers and let $G=(V,w)$ be a positive $p$-weighted graph with enumeration $V := \lbrace v_1,\ldots,v_m\rbrace$
	and let $0 < \gamma \ll \zeta \ll \eta \ll 1/m$. 
	Then any dominating extension of $w$ is valid wrt $\gamma,\zeta,\eta$. 
\end{lemma}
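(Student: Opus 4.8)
The plan is to embed a clique of size $\sum_{v\in V}w(v)$ into the blow-up $H$ greedily, processing the vertices $v_1,\ldots,v_m$ of $V$ in the order supplied by the dominating extension, and maintaining at each stage a \emph{large pool} of candidate vertices inside each yet-unprocessed cluster $W_{v_i}$. Precisely, after processing $v_1,\ldots,v_{j-1}$ I will have chosen a clique $K$ with $w(v_1)+\cdots+w(v_{j-1})$ vertices, together with subsets $W'_{v_i}\subseteq W_{v_i}$ for every $i\ge j$ such that $|W'_{v_i}|\ge c_i n$ for a constant $c_i=c_i(\gamma,\zeta,\eta,m)$ bounded below in terms of $\eta$ and $m$ only, and such that every vertex of $K$ is adjacent to all of $W'_{v_i}$. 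To start, set $W'_{v_i}=W_{v_i}$ and $K=\varnothing$; since there is no constraint on $w(v_1)$, when $j=1$ I simply pick $w(v_1)\le p$ vertices spanning a $K_{w(v_1)}$ inside $W_{v_1}$ using $\alpha_p(H[W_{v_1}])\le\gamma|W_{v_1}|<|W_{v_1}|$.

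The core of the induction is the step from $j-1$ to $j$, where I must choose $a:=w(v_j)$ vertices inside $W'_{v_j}$ forming a $K_a$, all of whose members are adjacent to every vertex of $K$ and to a still-large portion of each later pool $W'_{v_i}$, $i>j$. Here is where the \emph{dominating} hypothesis and Claim~\ref{cl:int} enter. Consider the previously chosen vertices of $K$ that lie in clusters $W_{v_1},\ldots,W_{v_{j-1}}$: these number $w(v_1)+\cdots+w(v_{j-1})$, but by regularity what matters is the \emph{densities} of the pairs. For each already-processed $v_i$ and the cluster $W_{v_j}$, $\zeta$-regularity of $(W_{v_i},W_{v_j})$ at density $\ge\frac{w(v_i,v_j)-1}{p}+\eta$ guarantees that all but at most $\zeta|W_{v_j}|$ vertices of $W_{v_j}$ have at least $(\frac{w(v_i,v_j)-1}{p}+\eta-\zeta)|W_{v_i}|$ neighbours in any fixed $\zeta$-large subset of $W_{v_i}$. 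Because the backwards-weight multiset $\{w(v_i,v_j):i<j\}$ dominates $\{\frac{p(a-1)}{a}+1,\ldots,\frac{p(a-1)}{a}+1,a\}$, after reindexing I get at least $a$ indices $i$ with $w(v_i,v_j)\ge a$, hence density $\ge\frac{a-1}{p}+\eta$ between $W_{v_i}$ and $W_{v_j}$; call the corresponding already-embedded vertex-neighbourhoods $A_1,\ldots,A_a$ inside $W'_{v_j}$ (first fixing one vertex per relevant $W_{v_i}$, throwing away the $\le m\zeta n$ atypical vertices of $W_{v_j}$). Actually I need a common pool simultaneously good for \emph{all} of $K$, so I intersect over all $w(v_1)+\cdots+w(v_{j-1})\le mp$ previously chosen vertices — regularity loses only $\zeta n$ per vertex, so the surviving set still has size $\ge(c_j-mp\zeta)n$, and within it the $a$ sets $A_1,\ldots,A_a$ coming from the top-$a$ backwards weights each have measure $\ge(\frac{a-1}{p}+\frac{\eta}{2})$ of it. Applying Claim~\ref{cl:int} with $Y$ this surviving set gives $I\subseteq[a]$, $|I|=a$ — so $I=[a]$ — with $|\bigcap A_i|\ge p^{-a}\cdot\frac{\eta}{2}|Y|$, i.e.\ a set $Y^\star$ of size $\gg_{\eta,m}n$ of vertices in $W_{v_j}$ each adjacent to all of $K$. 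Inside $H[Y^\star]$, since $\alpha_p\le\gamma|W_{v_j}|\ll|Y^\star|$, I find a $K_a$; these $a$ vertices are exactly the new clique vertices.

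It remains to update the later pools. For each $i>j$ and each of the $a$ newly chosen vertices $y\in W_{v_j}$, $\zeta$-regularity of $(W_{v_j},W_{v_i})$ at density $\ge\frac{w(v_j,v_i)-1}{p}+\eta\ge\eta$ (positivity!) implies — provided $y$ is typical, which I can ensure by first discarding the $\le a\zeta n$ atypical vertices from $Y^\star$ before choosing the $K_a$ — that $y$ has $\ge(\eta-\zeta)|W'_{v_i}|$ neighbours in $W'_{v_i}$, wait, more carefully: regularity bounds degrees into $\zeta$-large subsets, so as long as the current pool $W'_{v_i}$ has measure $\ge\zeta|W_{v_i}|$ I may replace $W'_{v_i}$ by $W'_{v_i}\cap N(y)$, losing a factor of roughly $\eta$ at worst; doing this for all $a\le p$ new vertices shrinks $W'_{v_i}$ by a factor $\ge\eta^p$, still $\gg_{\eta,m}n$. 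Since there are only $m$ stages, each pool is shrunk a bounded number of times, and choosing the hierarchy $\gamma\ll\zeta\ll\eta\ll1/m$ makes every ``$\gg_{\eta,m}n$'' legitimately positive and every ``all but $\zeta n$'' negligible; this also fixes $n_0=n_0(\gamma,\zeta,\eta)$. Carrying the induction through $j=1,\ldots,m$ produces a clique of size $\sum_{v\in V}w(v)\ge q$, which is the definition of $w$ being valid wrt $\gamma,\zeta,\eta$; hence $G\in\widetilde{\mathcal G}_p(q)$ and $\mathcal G_p(q)\subseteq\widetilde{\mathcal G}_p(q)$.

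The main obstacle, and the reason the \emph{dominating} condition is the right hypothesis rather than something cruder, is the bookkeeping in the inductive step: one must track that the loss incurred when intersecting the candidate pool against \emph{all} previously embedded vertices (there can be up to $mp$ of them) stays controlled, and simultaneously that enough of the backwards edges into $W_{v_j}$ have weight $\ge a=w(v_j)$ to feed Claim~\ref{cl:int} with exactly $a$ sets of the required density — this is precisely what domination of $\{\frac{p(a-1)}{a}+1,\ldots,a\}$ buys (the $j-2$ copies of $\frac{p(a-1)}{a}+1$ correspond to the weaker pairs that are only used for the density they provide, while the single entry $a$ forces at least one genuinely heavy backwards edge). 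Getting the quantifiers in the hierarchy $\gamma\ll\zeta\ll\eta\ll1/m$ to line up so that all error terms are absorbed is routine but must be done with care.
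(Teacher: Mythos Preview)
Your proposal has a genuine gap: you process the clusters in the forward order $v_1,\ldots,v_m$, but the dominating condition is tailored to the \emph{reverse} order, and the forward greedy embedding breaks. The dominating hypothesis constrains, for each $j$, the \emph{backward} multiset $\{w(v_i,v_j):i<j\}$ in terms of $a:=w(v_j)$: all $j-1$ backward weights are $\ge a$ and all but one are $\ge\frac{p(a-1)}{a}+1$ (note $\frac{p(a-1)}{a}+1\ge a$, so these are the \emph{stronger} entries, not the weaker ones as you write). In the paper's reverse processing these backward weights are precisely the densities to the \emph{not-yet-processed} clusters: one finds a $K_p$ in the current pool, applies Claim~\ref{cl:int} to the $p$ neighbourhoods of its vertices inside the single ``weak'' future pool (density $\ge\frac{a-1}{p}$) to select $a$ of them with large common neighbourhood there, while for every other future pool the density $\ge\frac{a-1}{a}$ makes plain inclusion--exclusion on $a$ sets work.

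In your forward scheme, the backward weights concern clusters already consumed, so the dominating condition gives you no control over future pools $W'_{v_i}$, $i>j$. Concretely, take $m=3$, $p=3$, $w(v_1,v_2)=w(v_2,v_3)=3$, $w(v_1,v_3)=1$ with dominating weights $(3,3,1)$: after placing a $K_3$ in $W_{v_1}$ you must intersect three neighbourhoods of density $\eta$ in $W_{v_3}$, which can be empty. Two related symptoms: your line ``regularity loses only $\zeta n$ per vertex'' conflates discarding atypical vertices with intersecting neighbourhoods (the latter shrinks by the density, not by $\zeta$); and your invocation of Claim~\ref{cl:int} feeds it $a$ sets of density $\ge\frac{a-1}{p}$, whereas the claim requires $p$ such sets --- moreover the sets $A_i\subseteq W'_{v_j}$ you name are vacuously all of $W'_{v_j}$, since that pool already lies in the common neighbourhood of $K$. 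Reversing the order of the embedding fixes all of this at once.
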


\begin{proof}
	To see why the first statement follows from the second, let $p \leq q$ be positive integers and
	let $G \in \cG_p(q)$. So $G$ is a positive $p$-weighted graph with a dominating extension of size at least $q$.
	Let $0 < \gamma \ll \zeta \ll \eta \ll 1/q,1/m$, where $|V|=m$.
	By the second statement, the dominating extension is valid wrt $\gamma,\zeta,\eta$. So $G \in \widetilde{\cG}_p(q)$.
	
	It remains to prove the second statement.
	Let $0 < \gamma \ll \zeta \ll \eta \ll 1/m$ and let $H = (W,E)$ be an $n$-vertex graph such that
	there is a vertex partition $W=\bigcup_{v \in V}W_v$ with $|W_v| \geq \eta n$ and $\alpha_p(H[W_v])\le \gamma|W_v|$ for all $v \in V$, and
	$H[W_u,W_v]$ is $(\zeta,\geq\!\frac{w(u,v)-1}{p}+\eta)$-regular for all $uv\in{V\choose 2}$.
	Suppose that $w$ has a dominating extension.
	To prove the lemma, we need to find $K_q \subseteq H$ where $q := \sum_{v \in V}w(v)$.

	We will find a clique that contains $w(v_i)$ vertices in $W_i:=W_{v_i}$, in the reverse order $i=m,m-1,\ldots,1$.
	Suppose for some $1 \leq r \leq m$ that for every $j > r$ we have found vertices $x_1^{j},\ldots,x_{w(v_j)}^j$ in $W_j$
	such that, writing $X_{r+1}$ for their union, we have $H[X_{r+1}]$ is a clique and, for all $i\in [r]$, $W_i^{r+1} := \bigcap_{x \in X_{r+1}}N_H(x,W_i)$ satisfies $|W_i^{r+1}| \geq (\eta/4p^p)^{m-r}|W_i|$.
	We will extend this clique by adding $w(v_r)$ vertices from $W_r^{r+1}$.
	Note that $m-r \leq |X_{r+1}| < q$.
	
	For each $i \in [r-1]$, let $d_{ir} := \frac{w(v_i,v_r)-1}{p}$.
	By standard results (the Slicing Lemma for regular pairs), the pair $(W_r^{r+1},W_i^{r+1})$ is $(\zeta^{2/3},\geq\!d_{ir}+\eta/2)$-regular for all $i \in [r-1]$, as $\zeta \ll \eta,1/q,1/p$.
	By further standard results (on superregular pairs), for each $i \in [r-1]$, there is $W_{r,i} \subseteq W_r^{r+1}$   
	with $|W_{r,i}| \geq (1-\sqrt{\zeta})|W_r^{r+1}|$ such that
	$$
	|N_H(x,W_i^{r+1})| \geq (d_{ir}+\eta/3)|W_i^{r+1}| 
	$$
	for every $x \in W_{r,i}$.
	Letting $W_r^*=\cap_{i\in[r-1]}W_{r,i} \subseteq W_r^{r+1}$, we have  $|W_r^*| \geq (1-r\sqrt{\zeta})|W_r^{r+1}|$. 
	Next, let $a:= w(v_r)$. 
	Since $w$ is a dominating extension, there is $s \in [r-1]$ such that $w(v_s,v_r) \geq a$, whence $d_{sr} \geq \frac{a-1}{p}$, and, for all $i \in [r-1]\setminus\lbrace s\rbrace$, $w(v_i,v_r) \geq \frac{p(a-1)}{a}+1$, whence $d_{ir}\ge \frac{a-1}{a}$.
	
	Now, since $\alpha_p(H) \leq \gamma n \leq (1-r\sqrt{\zeta})(\eta/4p^p)^{m-r}\eta n \leq |W^*_r|$, $H$ induces a copy of $K_{p}$ on some $Q\subseteq W_r^*$. Since each $x\in Q$ is adjacent to at least $d_{sr}+\eta/3$ proportion of the vertices in $W_s^{r+1}$ and $d_{sr}=\frac{a-1}{p}$, Claim~\ref{cl:int} yields an $a$-subset $I\subseteq Q$ such that, letting $W_i^r := \bigcap_{x \in I}N_H(x,W_i^{r+1})$ for all $i \in [r-1]$, we have 
	$$|W_s^r| \geq p^{-a}\cdot\eta/3\cdot|W_s^{r+1}| \geq (\eta/4p^p)^{m-r+1}|W_s|.$$
	Let $I =: \lbrace x^r_1,\ldots,x^r_{a}\rbrace$ and $X_r := X_{r+1}\cup I$.
	By construction, $H[X_r]$ is a clique.
	Recall that for all $i \in [r-1]\setminus\lbrace s\rbrace$, we have $d_{ir} \geq (a-1)/a$,
	thus by inclusion-exclusion, 
	$$|W_i^r| \geq a(d_{ir}+\eta/3)|W_i^{r+1}| - (a-1)|W_i^{r+1}| \geq \eta|W_i^{r+1}|/4 \geq (\eta/4p^p)^{m-r+1}|W_i|.$$
	
	Therefore we can complete the embedding sequentially to obtain a vertex set $X_1$ of size $q=\sum_{i=1}^{p}w(v_i)$ upon which $H$ spans a clique.
\end{proof}


\begin{remark}
	It is worth noting that dominating extensions are not always the best ones to take. 
	Consider for example integers $p,s,t$ with $2s-1\le p \leq s(s-1)$ and let $G=(\lbrace v_1,\ldots,v_t\rbrace,w)$, where $w(v_i,v_j)=s$ for all $ij\in{[t]\choose 2}$. As $\frac{p}{2}+1>s$, in any dominating extension, at most two vertices can have weight at least $2$, offering at best $(w(v_1),\ldots,w(v_t)) =(p,s,1,\ldots,1)$. But in fact there is a valid vertex weighting of larger size, namely $(p,s,2,1,\ldots,1)$ and so $G\in \widetilde{\cG}_p(p+s+t-1)$. Indeed, as in the proof of Lemma~\ref{embedding}, we can put one vertex in each of $W_t,\ldots,W_4$, whose common neighbourhood $W_i^4$ in each $W_i$ with $i \in [3]$ is linear. 
	As the $W_i$'s are pairwise $(\zeta,\geq\!\frac{s-1}{p}+\eta)$-regular, Claim~\ref{cl:int} implies that among the vertices of a $K_{p}$ in $W_3^4$, there are $s$ with linear common neighbourhood in $W_2^4$. It suffices to find an edge in this $K_s$ whose common neighbourhood in $W_1^4$ is linear. Since $(\frac{s-1}{p}+\eta)s>1$, averaging (or Claim~\ref{cl:int} again) yields such an edge.
\end{remark}

To prove Theorems~\ref{thm-upper-main} and~\ref{thm-upper-easy} we will consider weighted graphs (and will not require anything to do with regularity). Indeed, Lemmas~\ref{reduce} and~\ref{embedding} imply the following.

\begin{lemma}\label{lem:summary}
	Let $p \leq q$ be positive integers.
	Suppose that for all $p$-weighted graphs $G=(V,w)$ with $\delta(G) > dp|V|$, there is $J \subseteq V$ such that $G[J] \in \cG_p(q)$. Then $\varrho_p(q) \leq d$.\qed
\end{lemma}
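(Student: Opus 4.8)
The plan is to obtain the lemma by feeding the conclusion of Lemma~\ref{embedding} into Lemma~\ref{reduce}; there is essentially nothing new to prove, since all the content sits in those two results. By Lemma~\ref{embedding} we have $\cG_p(q)\subseteq\widetilde{\cG}_p(q)$, so the hypothesis of the present lemma --- that every $p$-weighted graph $G=(V,w)$ with $\delta(G)>dp|V|$ admits $J\subseteq V$ with $G[J]\in\cG_p(q)$ --- already tells us that every such $G$ admits $U\subseteq V$ with $G[U]\in\widetilde{\cG}_p(q)$, i.e.\ is not $\widetilde{\cG}_p(q)$-free. This is precisely the assumption ``$d\ge\pi(\widetilde{\cG}_p(q))$'' in the reformulated version of Lemma~\ref{reduce}, so that lemma applies.

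There is one cosmetic point to attend to: membership in $\widetilde{\cG}_p(q)$ carries the restriction $|U|\le q$, whereas $\cG_p(q)$ imposes no bound on the number of vertices. To fix this I would, when $|J|>q$, replace $J$ by the first $q$ vertices of an enumeration $v_1,\dots,v_{|J|}$ of $J$ realising a dominating extension of size at least $q$: restricting a dominating extension to an initial segment $v_1,\dots,v_q$ is again a dominating extension (the constraint imposed on $v_j$ refers only to $v_1,\dots,v_j$), and this initial segment still has size at least $q$ because every vertex weight is at least $1$; hence the trimmed set still lies in $\cG_p(q)\subseteq\widetilde{\cG}_p(q)$. With this in hand, Lemma~\ref{reduce} yields, for every $\delta>0$, some $\eps=\eps(\delta)>0$ such that every sufficiently large $n$-vertex graph $H$ with $e(H)\ge(d+\delta)\binom{n}{2}$ and $\alpha_p(H)\le\eps n$ contains a copy of $K_q$; equivalently $\rt_p(n,K_q,\eps n)<(d+\delta)\binom{n}{2}$ for all large $n$, so $\varrho_p(q)=\lim_{\eps\to 0}\lim_{n\to\infty}\rt_p(n,K_q,\eps n)/\binom{n}{2}\le d+\delta$. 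Letting $\delta\to 0$ gives $\varrho_p(q)\le d$.

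I do not expect any genuine obstacle: the argument is a two-step synthesis, with the substantive work being the regularity-lemma reduction in Lemma~\ref{reduce} and the sequential clique embedding (via dominating extensions and the intersection bound of Claim~\ref{cl:int}) in Lemma~\ref{embedding}. The only mildly delicate step is the bookkeeping in the previous paragraph that reconciles the vertex-count conventions of $\cG_p(q)$ and $\widetilde{\cG}_p(q)$, and this is routine.
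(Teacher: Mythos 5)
Your proof is correct and takes the same route as the paper, which treats the lemma as an immediate consequence of Lemmas~\ref{reduce} and~\ref{embedding}: feed $\cG_p(q)\subseteq\widetilde\cG_p(q)$ into the ``in other words'' reformulation of $d\ge\pi(\widetilde\cG_p(q))$. The extra bookkeeping you supply is not merely cosmetic, though: since any positive $p$-weighted graph on at least $q$ vertices admits the all-ones dominating extension (of size $|V|\ge q$), the family $\cG_p(q)$ genuinely contains graphs on more than $q$ vertices, so the inclusion $\cG_p(q)\subseteq\widetilde\cG_p(q)$ as stated in Lemma~\ref{embedding} needs exactly the repair you describe. Your observation that the dominating-extension condition for $v_j$ refers only to $v_1,\dots,v_j$, so that restriction to an initial segment remains dominating, is correct, and restricting a positive graph keeps it positive, so the trimmed $G[J']$ still lies in $\cG_p(q)$ with $|J'|=q$, after which Lemma~\ref{embedding} applies without complaint. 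In short: same approach as the paper, with a worthwhile patch for an imprecision the paper glosses over with its \verb|\qed|.
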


\subsection{Proof of Theorem~\ref{thm-upper-easy}}\label{sec-upper-easy}

Given an $m\times m$ matrix $A$, define
$$
g(A) := \textstyle{\max\left\lbrace \bm{u}^\intercal A\bm{u}: \bm{u} = (u_1,\ldots,u_m)^\intercal;~ \sum_{i \in [m]}u_i=1;~ u_i \geq 0\right\rbrace}
$$
(where the maximum is attained since it is taken over a compact set). Say that any $\bm{u}$ which attains the maximum is \emph{optimal} for $A$.
We say that $A$ is \emph{dense} if $A$ has zero diagonal and, for any $i \in [m]$, the submatrix $A'$ obtained by deleting the $i$-th row and $i$-th column
satisfies $g(A') < g(A)$. The following properties of dense matrices and their optimal vectors will be useful.

\begin{lemma}\label{sym}
	Let $m \in \mathbb{N}$ and let $A = (a_{ij})$ be a dense symmetric $m\times m$ matrix with entries in $\lbrace 0,1,\ldots,p\rbrace$ and let $\bm{u}$ be optimal for $A$.
	Then
	\begin{itemize}
		\item[(i)] $A$ is positive, that is, $a_{ij}>0$ for all $1\le i<j\le m$;
		\item[(ii)] $u_i>0$ for all $i \in [m]$;
		\item[(iii)] $\sum_{i \in [m]\setminus\lbrace j \rbrace}a_{ij}u_i = g(A)$ for all $j \in [m]$.
	\end{itemize}
\end{lemma}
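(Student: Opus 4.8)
The plan is to prove the three items in the order (iii) $\Rightarrow$ (ii) $\Rightarrow$ (i), exploiting the definition of density (deleting a row/column strictly decreases $g$) and a standard first-order-optimality / support-shifting argument for the quadratic form $\bm u^\intercal A \bm u$ on the simplex. For (iii), fix an optimal $\bm u$ and let $f_j := \sum_{i \neq j} a_{ij}u_i$ denote the ``gradient coordinate'' at $j$, so that $\bm u^\intercal A\bm u = \sum_j u_j f_j$. First I would observe that $g(A) = \bm u^\intercal A \bm u \le \max_j f_j$, since $\bm u^\intercal A \bm u$ is a convex combination of the $f_j$; and conversely, picking $j$ with $f_j$ maximal and comparing $\bm u$ with the vertex $\bm e_j$ along the segment $\bm u + \lambda(\bm e_j - \bm u)$ shows (by optimality of $\bm u$, looking at the derivative at $\lambda = 0$, which is $f_j - \bm u^\intercal A \bm u \ge 0$ if $u_j$ can be increased, i.e. as long as we only push mass \emph{onto} $j$) that $f_j \le g(A)$ whenever $u_j > 0$; combining, $f_j = g(A)$ for every $j$ in the support of $\bm u$, and $f_j \le g(A)$ for all $j$. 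To upgrade this to ``$f_j = g(A)$ for \emph{all} $j$'' we use density: if some coordinate $i$ had $u_i = 0$, then $\bm u$ (restricted) is feasible and optimal for the deleted submatrix $A'$, giving $g(A') \ge \bm u^\intercal A \bm u = g(A)$, contradicting density (which forces $g(A') < g(A)$). Hence (ii) holds, and then (iii) is immediate from the support computation above applied to the full index set $[m]$.

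With (ii) in hand, (i) follows by a perturbation argument. Suppose $a_{ij} = 0$ for some $i < j$. Since all $u_k > 0$ by (ii), consider moving a small amount of mass $\lambda > 0$ from $i$ to $j$: set $\bm u' := \bm u + \lambda(\bm e_j - \bm e_i)$, feasible for small $\lambda$. A direct expansion gives
\[
\bm u'^\intercal A \bm u' = \bm u^\intercal A \bm u + 2\lambda\,(f_j - f_i) - 2\lambda^2 a_{ij} + O(\lambda^2 \cdot 0) = g(A) + 2\lambda(f_j - f_i),
\]
using $a_{ij}=0$ and $f_i = f_j = g(A)$ by (iii), so in fact the first-order term vanishes and $\bm u'$ remains optimal for every small $\lambda$. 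Iterating (or taking $\lambda$ as large as feasibility allows) we can drive $u_i$ down to $0$ while keeping optimality, contradicting (ii). [Equivalently: an optimal vector with $u_i = 0$ exists, again contradicting density via the deleted submatrix, exactly as in the proof of (ii).] This rules out any zero off-diagonal entry, establishing (i); the diagonal is zero by hypothesis on dense matrices.

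I expect the main technical care — rather than a genuine obstacle — to be in the optimality/support argument for (iii): one must be careful that the one-sided derivative condition only gives $f_j \le g(A)$ at coordinates where mass can be \emph{removed} and $f_j \ge g(A)$ where mass can be \emph{added}, so a priori one only controls $f_j$ on the support; the role of density is precisely to force the support to be all of $[m]$, after which the two inequalities pin down $f_j \equiv g(A)$. The perturbation computation for (i) is then routine, the only subtlety being that the quadratic correction term is harmless because it is multiplied by $a_{ij} = 0$, so optimality is preserved to \emph{all} orders along that edge, which is what lets us push a coordinate to zero. No compactness or regularity input beyond what is already recorded (the max defining $g(A)$ is attained) is needed.
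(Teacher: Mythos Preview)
Your proof is correct and, for (ii) and (iii), identical in spirit to the paper's: (ii) is the one-line observation that a zero coordinate of an optimal $\bm u$ would make the deleted submatrix achieve $g(A)$, violating density; (iii) is the first-order optimality condition on the simplex, which the paper simply names as ``Lagrange multipliers''. The one genuine difference is in (i): the paper does not prove it but cites Zykov symmetrisation from~\cite{EHSSSz}, whereas you deduce it internally from (ii) and (iii) via the mass-shift $\bm u+\lambda(\bm e_j-\bm e_i)$, noting that the quadratic correction $-2\lambda^2 a_{ij}$ vanishes so optimality persists all the way to $u_i=0$. This is exactly the symmetrisation argument written out, and has the advantage of being self-contained.

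One cosmetic remark on your exposition of the first-order step: the sign and the ``whenever $u_j>0$'' qualifier are slightly tangled. Moving toward $\bm e_j$ is feasible for \emph{every} $j$, so optimality gives $f_j\le g(A)$ for all $j$ outright; then the identity $g(A)=\sum_j u_j f_j$ forces $f_j=g(A)$ on the support. Your stated conclusion is correct either way, but tightening this would make the write-up cleaner.
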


\begin{proof}
	Part~(i) is Lemma~3.3 in~\cite{EHSSSz} and follows from a version of Zykov's symmetrisation and that $A$ is dense. For~(ii), one can easily see that every $u_i$ is positive (otherwise the matrix $A'$ obtained by deleting the $i$-th row and $i$-th column of $A$ satisfies $g(A')=g(A)$).
	Part~(iii) follows from~(ii) and the method of Lagrange multipliers (and the fact that $A$ has zero diagonal).
\end{proof}

The following lemma together with Lemma~\ref{lem:summary} implies Theorem~\ref{thm-upper-easy}.

\begin{lemma}\label{uppertilde}
	Let $p,s,t$ be positive integers satisfying $t(t-2) \leq s \leq t^2$ and $s+t-1 \leq p$. Then
	for every $p$-weighted $n$-vertex graph $G=(V,w)$ with $\delta(G) > \frac{s(t-1)}{t}\cdot n$,
	there is $J \subseteq V$ such that $G[J] \in \cG_p(p+s+t-1)$.
\end{lemma}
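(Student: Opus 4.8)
The plan is to reduce to a \emph{dense} weight matrix, as in the arguments of~\cite{EHSSSz}, and then extract a heavy edge from the Lagrangian $g(\cdot)$. Let $A_0$ be the weight matrix of $G$, i.e.\ the symmetric $n\times n$ matrix with $(A_0)_{xy}=w(x,y)$. Evaluating its quadratic form at the uniform vector gives $g(A_0)\ge \frac1{n^2}\sum_{x}d_G(x)\ge \delta(G)/n>\frac{s(t-1)}{t}$. Deleting a row and the corresponding column of a matrix only shrinks the simplex over which $g$ is maximised, so $g$ cannot increase; I would therefore pass to a principal submatrix $A$ of $A_0$ that is \emph{minimal} subject to $g(A)=g(A_0)$. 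Then $A$ is dense, so Lemma~\ref{sym} applies: the induced $p$-weighted subgraph $G':=G[V']$ (write $m:=|V'|$) is positive, and every optimal vector $\bm u$ for $A$ has all coordinates strictly positive; also $m\ge 2$ since $g(A)>0$. The set $J$ will be found inside $V'$.

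If $m\ge s+t$, I would just take $J\subseteq V'$ with $|J|=s+t$: since $G'$ is positive, $G[J]$ is a positive $p$-weighted graph with an edge of weight $\ge 1$, so~\eqref{eq:mq} gives $G[J]\in\cG_p(p+1+(s+t)-2)=\cG_p(p+s+t-1)$. Hence the substantive case is $m\le s+t-1$, where I would take $J:=V'$ and show $G'$ has an edge of weight at least $\tau^*:=s+t+1-m$ (note $\tau^*\le s+t-1\le p$, using the hypothesis, so this is not vacuous). Indeed, letting $\tau$ be the maximum edge weight of $G'$, bounding every entry of $A$ by $\tau$ and using that $\bm u$ lies in the simplex with full support gives $g(A)=\sum_{i,j}a_{ij}u_iu_j\le\tau\bigl(1-\sum_iu_i^2\bigr)\le\tau(1-1/m)$, so $\tau>\frac{m}{m-1}\cdot\frac{s(t-1)}{t}$. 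It then suffices to verify the arithmetic inequality $\frac{m}{m-1}\cdot\frac{s(t-1)}{t}\ge s+t-m$, which, after multiplying through by $t(m-1)>0$ and using the polynomial identity $s(t-1)m-t(m-1)(s+t-m)=(t-m)(s+t-tm)$, is exactly $(t-m)(s+t-tm)\ge 0$; given this, $\tau>s+t-m$, and integrality of $\tau$ forces $\tau\ge s+t+1-m=\tau^*$. Finally, $G'$ is a positive $p$-weighted graph on $m$ vertices with an edge of weight $\ge\tau^*$, so~\eqref{eq:mq} yields $G'\in\cG_p(p+\tau+m-2)$, and since $p+\tau+m-2\ge p+\tau^*+m-2=p+s+t-1$ and $\cG_p(q')\subseteq\cG_p(q)$ whenever $q'\ge q$, we get $G'=G[V']\in\cG_p(p+s+t-1)$, as required.

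For the inequality $(t-m)(s+t-tm)\ge0$ I would split on $m$: if $m\le t$ then $t-m\ge0$ and, using $s\ge t(t-2)$, $s+t-tm\ge t(t-2)+t-tm=t(t-1-m)\ge0$ (the product being $0$ when $m=t$); if $m\ge t+1$ then $t-m<0$ and, using $s\le t^2$, $s+t-tm\le t^2+t-tm=t(t+1-m)\le0$. This is precisely where the two arithmetic hypotheses $t(t-2)\le s$ and $s\le t^2$ enter. The estimates on $g$ and the case $m\ge s+t$ are routine; the one subtlety worth highlighting is the reduction to a dense matrix, since it is exactly what makes Lemma~\ref{sym} applicable and guarantees that $\bm u$ has full support (hence $\sum_iu_i^2\ge1/m$, which drives the bound $\tau>\frac{m}{m-1}\cdot\frac{s(t-1)}{t}$). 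I do not expect any genuine obstacle beyond this bookkeeping.
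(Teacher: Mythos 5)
Your proof is correct and follows essentially the same approach as the paper: pass to a dense principal submatrix via the Lagrangian $g(\cdot)$, use Lemma~\ref{sym}(i) for positivity, and conclude with~\eqref{eq:mq} after extracting a sufficiently heavy edge. The only small variation is that you bound the quadratic form directly by $g(A)\le\tau\bigl(1-\sum_i u_i^2\bigr)\le\tau(1-1/m)$ (a Motzkin--Straus-style estimate that in fact needs only $\sum_i u_i=1$, not full support), whereas the paper invokes the stationarity identity of Lemma~\ref{sym}(iii) at the index $j$ of maximal $u_j$ and phrases the conclusion as a contradiction; the two routes produce the same inequality, and your polynomial identity and case split on $m$ versus $t$ check out.
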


\begin{proof}
	Write $q := p+s+t-1$.
	Let $G=(V,w)$ be a $p$-weighted graph on $n$ vertices such that $\delta(G) > \frac{s(t-1)}{t} \cdot n$. 
	Let $V=\{v_1,\ldots,v_n\}$ be an enumeration and let $A = (a_{ij})$ be the symmetric $m \times m$ matrix with $a_{ij}=w(v_i,v_j)$ for $i \neq j$ and $0$ otherwise.
	Choose $J \subseteq [m]$ such that the submatrix $A'$ obtained by retaining the rows and columns of $A$ with indices in $J$ satisfies $g(A') \geq g(A)$, and $|J|$ is minimal.
	Then $A'$ is dense (and non-empty since $g$ is $0$ on the empty matrix).
	Lemma~\ref{sym}(i) implies that $A'$ is positive.
	Let $m := |J|$
	and let $\bm{u}$ be optimal for $A'$ (so $\bm{u}$ has length $m$).
	Writing $\bm{u}_n=(\frac{1}{n},\ldots,\frac{1}{n})^\intercal$ of length $n$, Lemma~\ref{sym}(iii) implies that for all $j\in[m]$,
	$$
	\sum_{i \in J\setminus\{j\}}w(v_i,v_j)u_i = g(A') \geq g(A) \geq \bm{u}_n^\intercal A \bm{u}_n = \frac{1}{n^2}\sum_{ij \in \binom{n}{2}}2w(v_i,v_j) \geq \frac{1}{n}\delta(G) > \frac{s(t-1)}{t}.
	$$
	Let $G' := G[J]$, so $G$ is a positive $p$-weighted graph on $m\ge 2$ vertices.
	
	If $m \geq q-p+1$ or there are distinct $i,j \in J$ with $w(v_i,v_j) \geq q-p-m+2$, then by~(\ref{eq:mq}), $G' \in \mathcal{G}_p(q)$ (note $q-p-m+2 \leq p$).
	
	Thus we may assume that $m \leq q-p$ and $w(v_i,v_j) \leq q-p-m+1=s+t-m$ for all distinct $i,j \in J$.
	Let $i \in J$ be such that $u_i \geq u_j$ for all $i \in J$. So $u_i \geq \frac{1}{m}$ and
	\begin{equation}\label{eq:cont}
		\frac{s(t-1)}{t} < \sum_{i \in J\setminus\{j\}}w(v_i,v_j)u_i  \leq (s+t-m)\cdot \frac{(m-1)}{m}.
	\end{equation}
Multiplying by $m$, we have $(m-t)(m-\frac{s+t}{t}) < 0$, which by $t-1 \leq \frac{s+t}{t} \leq t+1$ and $m \in \mathbb{N}$ is a contradiction.
\end{proof}

\subsection{Proof of Theorem~\ref{thm-upper-main}}\label{sec-upper-main}
Throughout this section, we always assume $p\in\{3,4\}$. 
Given a $p$-weighted graph $G=(V,w)$ and a $J \subseteq V$ with $J = \{v_1,\ldots,v_m\}$,
the ``maximal'' dominating extension $w$ of $G[J]$ is, by definition, such that,
for each $j\in [m]$, we have
\begin{itemize}
	\item $w(v_j) = p$ if and only if $w(v_i,v_j) = p$ for all $i \in [j-1]$;
	\item $w(v_j) \geq a$, for every $2\le a\le p-1$, if and only if $w(v_i,v_j)\geq a$ for all $i \in [j-1]$ with equality \emph{at most once};
	\item $w(v_j)\geq 1$ if and only if $w(v_i,v_j) \geq 1$ for all $i \in [j-1]$.
\end{itemize}

We will find it convenient to write $\tilde{w}(x,y) := p - w(x,y)$; and given $K \subseteq V$, to let
$$
\tilde{w}(K) := \sum_{xy \in \binom{K}{2}}\tilde{w}(x,y)\quad\text{and}\quad \gamma_K(x) := \sum_{y \in K\setminus\{x\}}\tilde{w}(x,y)\quad\text{for all }x \in V.
$$

\begin{proposition}\label{prop-hero}
	For every $p$-weighted graph $G=(V,w)$, there exists a set $K \subseteq V$ such that
	\begin{itemize}
	\item[(i)] $G[K] \in \cG_p(p|K|-\tilde{w}(K))$;
	\item[(ii)] we have $\gamma_K(y) \leq p-1$ for all $y \in K$ and $\gamma_K(x) \geq p$ for all $x \in V\setminus K$;
	\item[(iii)] if $x \in V\setminus K$ and $y \in K$, we have $\gamma_{K\setminus\{y\}}(x) \geq \gamma_K(y)$.
	\end{itemize}
\end{proposition}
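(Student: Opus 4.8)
The plan is to choose $K$ by one global extremal condition on the functional $f(K):=p|K|-\tilde{w}(K)$ defined on subsets $K\subseteq V$, and then read off all three properties. The two elementary identities one needs, obtained by adding or deleting a single vertex, are $f(K\cup\{x\})=f(K)+p-\gamma_K(x)$ for $x\in V\setminus K$, and $f(K\setminus\{y\})=f(K)-p+\gamma_K(y)$ for $y\in K$. Take $K\subseteq V$ to be a set maximising $f$, and, among all maximisers, one of minimum cardinality. If $V=\varnothing$ everything is vacuous, and otherwise $f(\{v\})=p>0=f(\varnothing)$ for any $v\in V$, so $K\neq\varnothing$; thus (i) is a genuine statement.

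Properties (ii) and (iii) are then immediate. For $y\in K$, maximality gives $f(K\setminus\{y\})\le f(K)$, hence $\gamma_K(y)\le p$; and $\gamma_K(y)=p$ would produce a strictly smaller maximiser, contradicting minimality of $|K|$, so $\gamma_K(y)\le p-1$. For $x\in V\setminus K$, maximality gives $f(K\cup\{x\})\le f(K)$, hence $\gamma_K(x)\ge p$. For (iii), given $x\in V\setminus K$ and $y\in K$, the swapped set $K':=(K\setminus\{y\})\cup\{x\}$ satisfies $|K'|=|K|$ and, combining the two identities, $f(K')=f(K)+\gamma_K(y)-\gamma_{K\setminus\{y\}}(x)$; since $f(K')\le f(K)$ we conclude $\gamma_{K\setminus\{y\}}(x)\ge\gamma_K(y)$.

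For (i), note first that $G[K]$ is positive: for distinct $x,y\in K$ we have $\tilde{w}(x,y)\le\gamma_K(y)\le p-1$, i.e.\ $w(x,y)\ge 1$. It then suffices to exhibit a dominating extension of $G[K]$ of size at least $f(K)=p|K|-\tilde{w}(K)$. Fix any enumeration $v_1,\dots,v_m$ of $K$, pass to the maximal dominating extension, and write $\gamma_j^-:=\sum_{i<j}\tilde{w}(v_i,v_j)$ for the backward deficiency at $v_j$. The key claim is that $p-w(v_j)\le\gamma_j^-$ for every $j$. If $\gamma_j^-=0$, all backward weights equal $p$, so $w(v_j)=p$. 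If $1\le\gamma_j^-\le p-2$, set $a:=p-\gamma_j^-\in\{2,\dots,p-1\}$: every backward weight is at least $a$ (its deficiency is at most $\gamma_j^-=p-a$), and at most one of them can equal $a$ (such an edge would exhaust the entire budget $\gamma_j^-\ge 1$), so the rules defining the maximal dominating extension yield $w(v_j)\ge a$. If $\gamma_j^-\ge p-1$, positivity already gives $w(v_j)\ge 1\ge p-\gamma_j^-$. Summing over $j$, the extension has size $\sum_j w(v_j)\ge pm-\sum_j\gamma_j^-=pm-\tilde{w}(K)=f(K)$, hence $G[K]\in\cG_p(p|K|-\tilde{w}(K))$.

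The only non-mechanical point is the inequality $w(v_j)\ge p-\gamma_j^-$ for the maximal dominating extension — that a small backward deficiency forces a large vertex weight — which is where the rules defining the maximal extension (and, implicitly, the hypothesis $p\in\{3,4\}$ via the permitted values of $a$) enter; everything else is routine bookkeeping with the functional $f$. A minor point worth a line is that the extremal choice of $K$ is empty only when $V$ is, so that (i) is not a vacuous statement about the empty graph in the case of interest.
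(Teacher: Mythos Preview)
Your proof is correct and rests on the same extremal choice as the paper --- take $K$ maximising $f(K)=p|K|-\tilde w(K)$, break ties by minimum cardinality --- and the same core computation, namely that the maximal dominating extension satisfies $w(v_j)\ge p-\gamma_j^-$. The organisation, however, is cleaner than the paper's. The paper first restricts attention to \emph{heroic} sets (those for which every non-empty subset $L$ satisfies $G[L]\in\cG_p(f(L))$), proves an inductive claim (Claim~\ref{cl-hero}) that heroicness is preserved when adjoining a vertex of low $\gamma$, and then maximises $f$ only over heroic sets; this detour is needed because (ii) and (iii) are derived by comparing $K$ to $K\cup\{x\}$ and $(K\setminus\{y\})\cup\{x\}$, which must first be shown heroic. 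You sidestep this entirely by maximising $f$ over \emph{all} subsets, so (ii) and (iii) drop out of unrestricted extremality, and then you prove (i) for $K$ alone via the same backward-deficiency bookkeeping that appears inside Claim~\ref{cl-hero}. In effect you have observed that heroicness is automatic for any positive induced subgraph (by your inequality $w(v_j)\ge p-\gamma_j^-$), so the hereditary framework is unnecessary. Both arguments use $p\in\{3,4\}$ in exactly the same place, to ensure that ``all backward weights $\ge a$ with equality at most once'' already meets the dominating threshold $\frac{p(a-1)}{a}+1$.
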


\begin{proof}
We say a non-empty set $K$ is \emph{heroic} if, for all $\varnothing \neq L \subseteq K$ we have $G[L] \in \mathcal{G}_p(p|L|-\tilde{w}(L))$. Observe that every singleton in $V$ is heroic, as it can be given weight $p$, and subsets of heroic sets are heroic.

\begin{claim}\label{cl-hero}
If $K$ is heroic and $x \in V\setminus K$ with $\gamma_K(x) \leq p-1$, then $K \cup \{x\}$ is heroic.
\end{claim}
\begin{poc}
	As any singleton is heroic, it suffices to check that for any $\varnothing\neq L\subseteq K$, $G[L\cup \{x\}]\in\cG_p(p|L \cup \lbrace x\rbrace| - \tilde{w}(L \cup \lbrace x\rbrace))$. To see this, add $x$ to the end of the enumeration of $L$ with dominating extension of maximal size. We claim that setting $w(x):=p-\gamma_L(x)$ extends it to a dominating extension of $L\cup \{x\}$. Indeed, for each $v\in L$, \begin{equation*}
		w(v,x)=p- \tilde{w}(v,x)= p-\gamma_L(x)+\sum_{u\in L\setminus \{v\}}\tilde{w}(u,x)\geq p-\gamma_L(x),
	\end{equation*}
	where equality holds only when $w(u,x)=p$ for each $u\in L\setminus \{v\}$.
	We need to check the dominating properties.
	Note first that every $w(v,x) \geq 1$ (as $\gamma_L(x) \leq p-1$), and so we may assume $w(x)\ge 2$, i.e.~$\gamma_L(x)\le p-2$. 
	If $1 \leq \gamma_L(x) \leq p-2$, $2 \leq w(x) \leq p-1$ and $w(v,x) \geq w(x)$ with equality at most once, since if $w(v,x)=w(x)$ then $w(u,x)=p>w(x)$ for all $u \in L\setminus\{v\}$.
	If $\gamma_L(x)=0$, then $w(x)=p$ and $w(v,x)=p$ for all $v \in L$.
	So the extension to $L \cup \{x\}$ is dominating.
	As $L$ is heroic, we have
	$$
	\sum_{v \in L\cup\lbrace x\rbrace}w(v) \geq p|L|-\tilde{w}(L)+p-\gamma_L(x) = p|L \cup \lbrace v\rbrace| - \tilde{w}(L \cup \lbrace v\rbrace),
	$$
	as required.
\end{poc}

A heroic set $K$ is \emph{herculean} if
\begin{itemize}
	\item $p|K|-\tilde{w}(K)$ is maximal; 
	\item subject to the above, $|K|$ is minimal.
\end{itemize}
We claim that we can take any herculean $K$ for the required set.
Indeed,~$K$ satisfies (i).
	
	Now we prove (ii). Let $K' := K\setminus\lbrace y \rbrace$ and suppose $\gamma_K(y) \geq p$. Note that $K'\neq\varnothing$, since otherwise $\gamma_K(y)=0$. Then, using that $K$ is herculean, $p|K'|-\tilde{w}(K')<p|K|-\tilde{w}(K)$, entailing
	$\gamma_K(y)=\tilde{w}(K)-\tilde{w}(K')<p$, a contradiction.
	
	Suppose instead there is $x \in V\setminus K$ with $\gamma_K(x) \leq p-1$.
	Then Claim~\ref{cl-hero} implies that $K \cup\lbrace x\rbrace$ is heroic with 
	$$p|K \cup \lbrace x\rbrace| - \tilde{w}(K \cup \lbrace x\rbrace)=p|K|-\tilde{w}(K)+p-\gamma_K(x)>p|K|-\tilde{w}(K),$$ 
	contradicting the fact that $K$ is herculean.
	
	For~(iii), let $x \in V\setminus K$ and $y \in K$.
	Suppose that $\gamma_{K\setminus\{y\}}(x) < \gamma_K(y)$.
	Then~(ii) implies that $\gamma_{K\setminus\{y\}}(x) \leq p-1$.
	As $K\setminus\lbrace y \rbrace$ is heroic (as a subset of a heroic set), Claim~\ref{cl-hero} implies that $K' := (K\setminus\lbrace y \rbrace)\cup\lbrace x\rbrace$ is heroic. But $|K'|=|K|$ and 
	$$p|K'|-\tilde{w}(K') = p|K'|-\tilde{w}(K)+\gamma_K(y)-\gamma_{K\setminus\{y\}}(x) > p|K|-\tilde{w}(K),$$ 
	a contradiction to $K$ being herculean. 
\end{proof}

We are now ready to prove the final upper bound.
Recall that
$$
\varrho^*_p(pt+2) = \frac{(t-1)(2p-1)+1}{t(2p-1)+1} =
\begin{cases}
	\frac{5t-4}{5t+1} &\text{if }p=3,\\
	\frac{7t-6}{7t+1} &\text{if }p=4.\\
\end{cases}
$$
The lower bounds in Theorem~\ref{thm-upper-main} follow from Corollary~\ref{cor}; for the upper bounds, using Lemma~\ref{lem:summary}, it suffices to prove the following lemma.

\begin{lemma}\label{smallp}
	Let $p \in\lbrace 3,4\rbrace$ and let $t\in\mathbb{N}$.
	Let $G=(V,w)$ be a $p$-weighted $n$-vertex graph with
	$$\delta(G)>p \cdot \varrho_p^*(pt+2) \cdot n.$$
	Then there is $J \subseteq V$ such that $G[J] \in \cG_p(pt+2)$.
\end{lemma}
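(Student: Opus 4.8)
The plan is to apply Lemma~\ref{lem:summary}. Write $\varrho:=\varrho_p^*(pt+2)=\frac{(t-1)(2p-1)+1}{t(2p-1)+1}$ and $D:=1-\varrho=\frac{2p-1}{t(2p-1)+1}$, so that it suffices to prove: every (sufficiently large) $p$-weighted $n$-vertex graph $G=(V,w)$ with $\delta(G)>p\varrho n$ has a subset $J$ with $G[J]\in\cG_p(pt+2)$. The hypothesis rewrites as $\gamma_V(v)=p(n-1)-d_G(v)<pDn$ for every $v\in V$. Apply Proposition~\ref{prop-hero} to obtain a herculean set $K$. Since $\cG_p(q')\supseteq\cG_p(q)$ whenever $q'\le q$, and $G[K]\in\cG_p(p|K|-\tilde{w}(K))$ by part~(i), it is enough to prove $p|K|-\tilde{w}(K)\ge pt+2$. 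Put $m:=|K|$; by part~(ii), $K=\{v\in V:\gamma_K(v)\le p-1\}$, so $2\tilde{w}(K)=\sum_{y\in K}\gamma_K(y)\le(p-1)m$, whereas $\gamma_K(x)\ge p$ for all $x\in V\setminus K$. Assume for contradiction that $p|K|-\tilde{w}(K)\le pt+1$, i.e.\ $\tilde{w}(K)\ge p(m-t)-1$.

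First I would clear the easy regimes. If $K=V$ then $\gamma_V(v)\le p-1$ for all $v$, so $\tilde{w}(V)\le(p-1)n/2$ and $p|V|-\tilde{w}(V)\ge(p+1)n/2>pt+1$ for large $n$, a contradiction. So $K\subsetneq V$; double counting $\sum_{y\in K}\gamma_V(y)=2\tilde{w}(K)+\sum_{x\notin K}\gamma_K(x)$ against the degree bound $\sum_{y\in K}\gamma_V(y)<mpDn$, and using $\gamma_K(x)\ge p$ together with $\tilde{w}(K)\ge 0$, gives $p(n-m)<mpDn$, whence $mD\ge 1$ and therefore $m\ge t+1$ (for $n$ large). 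Moreover $p|K|-\tilde{w}(K)\ge pm-\frac{(p-1)m}{2}=\frac{(p+1)m}{2}$, so we may further assume $m<\frac{2(pt+2)}{p+1}$.

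The remaining window $t+1\le m<\frac{2(pt+2)}{p+1}$ is the heart of the proof, and it is here that the restriction $p\in\{3,4\}$ is used crucially. Because $\gamma_K(y)\le p-1\le 3$ for every $y\in K$, the restriction of the weighting $\tilde{w}$ to $K$ has all weighted degrees at most $p-1$; one enumerates the possible connected pieces (isolated vertices; a single edge of weight $2$, or for $p=4$ of weight $3$; paths and cycles of weight-$1$ edges; and for $p=4$ a few further small trees) and classifies the $m$ vertices of $K$ accordingly. The key structural input about $V\setminus K$ is Proposition~\ref{prop-hero}(iii), which (rewritten) states $\tilde{w}(x,y)\le\gamma_K(x)-\gamma_K(y)$ for all $x\in V\setminus K$, $y\in K$; thus every outside vertex must place its complement weight to $K$ (of total $\ge p$) on the $\gamma_K$-light vertices of $K$. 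Tracking where this weight is forced to accumulate, together with the corresponding degree constraint $\gamma_V(x)<pDn$ for the outside vertices $x$ themselves, produces a system of linear inequalities on the multiset $\{\gamma_V(v):v\in V\}$ which is infeasible: some vertex is forced to have $\gamma_V(v)\ge pDn$, contradicting the hypothesis. This is also the step where the exact value $\varrho_p^*(pt+2)$ is pinned down.

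The main obstacle is precisely that final optimisation: one must make the attachment estimate for $V\setminus K$ tight enough that the degree inequality fails \emph{uniformly} over all admissible $m$ (there are $\Theta(t)$ of them) and over every possible component structure of $\tilde{w}$ on $K$. This requires treating $p=3$ and $p=4$ separately through a somewhat lengthy case analysis, and that analysis constitutes the bulk of the work.
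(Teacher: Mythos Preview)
Your reduction to showing $p|K|-\tilde{w}(K)\ge pt+2$ for the herculean set $K$ is not attainable: this inequality can fail even when $\delta(G)>p\varrho_p^*(pt+2)n$. Take $p=3$, $t=1$ (so the target is $\cG_3(5)$ and the threshold is $\delta(G)>n/2$), and let $G$ have two classes $V_0,V_1$ of size $n/2$, all cross pairs of weight~$1$, a weight-$1$ perfect matching inside each class, and all other weights~$0$. Then every vertex has degree $n/2+1>n/2$, but any positive induced subgraph uses at most a matched pair from each side, and one checks directly that the herculean set is a single cross pair $K=\{v_0,v_1\}$ with $p|K|-\tilde{w}(K)=6-2=4<5$. (Likewise every vertex has $\gamma_V(v)=\tfrac{5n}{2}-4<\tfrac{5n}{2}=pDn$, so the infeasibility you hope to derive cannot occur.) The lemma is nonetheless true here because $J=\{v_0,v_0',v_1\}$, with $v_0'$ the matching partner of $v_0$, carries the dominating extension $(3,1,1)$ of size~$5$.

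This is exactly the mechanism the paper's proof exploits and that your plan is missing: one must \emph{augment} $K$. The paper takes $J=K\cup\{y_1,\dots,y_r\}\cup\{z_1,\dots,z_s\}$ with a dominating extension in which each $y_i$ receives weight $\ge 2$ and each $z_i$ weight $\ge 1$, chosen so that $2r+s$ is maximal; the target inequality becomes $p|K|-\tilde{w}(K)+2r+s\ge pt+2$. Assuming this fails, one defines a vertex weighting $\eta$ on $J$ (namely $\eta=2p-1-\gamma_K$ on $K$, $\eta=4$ on the $y_i$'s, $\eta=2$ on the $z_i$'s), averages $H(u)=\sum_{v\in J}\eta(v)\tilde{w}(u,v)$ over $u\in V$ to locate a vertex $u_*$ with $H(u_*)<p(2p-1)$, and then argues---using Proposition~\ref{prop-hero}(ii),(iii) essentially as you suggest---that $u_*$ can be appended to $J$ either as a new $z$ or (after deleting one $z$) as a new $y$, contradicting maximality of $2r+s$. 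The split into $p=3$ and $p=4$ only enters at this last appending step and is quite short; the structural case analysis on $\tilde{w}|_K$ that you anticipate is not needed.
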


\begin{proof}
	Let $G=(V,w)$ be an $n$-vertex $p$-weighted graph with $\delta(G)>p\cdot\frac{(t-1)(2p-1)+1}{t(2p-1)+1}\cdot n$.	
	Let $K \subseteq V$ be the set obtained from Proposition~\ref{prop-hero}. We claim that
	\begin{equation}\label{eq-K}
		|K|\ge t+1.
	\end{equation}
	To see this, observe first that for each $y\in V$, by Proposition~\ref{prop-hero}(ii), 
	$$\sum_{x \in K}\tilde{w}(x,y)=\gamma_K(y)+p\cdot\mathbbm{1}_{\{y\in K\}}\ge p.$$
	Consequently,
	\begin{align}
		pn &\leq \sum_{x \in K,y \in V}\tilde{w}(x,y) =\sum_{x \in K}\Big(pn-\sum_{y\in V}w(x,y)\Big)\nonumber\\
		&=\sum_{x \in K}(pn-d_G(x)) \leq |K|(pn-\delta(G)) < \frac{(2p-1)pn|K|}{(2p-1)t+1} < \frac{pn|K|}{t},\label{eq:ricecake}
	\end{align}
	so $|K| \geq t+1$ as claimed.
	
	Now let $J \subseteq V$ be a set of vertices with enumeration 
	$$J = \lbrace x_1,\ldots,x_k,y_1,\ldots,y_r,z_1,\ldots,z_s\rbrace,$$
	equipped with a dominating extension $w$, such that $\lbrace x_1,\ldots,x_k\rbrace = K$, $w|_K$ has size $pk-\tilde{w}(K)$ and 
	\begin{itemize}
		\item  $w(y_i) \geq 2$ for all $i \in [r]$;
		\item  $w(z_i) \geq 1$ for all $i \in [s]$;
		\item $2r+s$ is maximal.
	\end{itemize}
	Such a pair $(J,w)$ does exist. Indeed, taking $r=s=0$, we see that $J=K$ has a dominating extension of size $pk-\tilde{w}(K)$ by Proposition~\ref{prop-hero}(i).
	
	By choice, $G[J] \in \mathcal{G}_p(pk-\tilde{w}(K)+2r+s)$.
	We shall argue that $G[J]$ is the desired subgraph in $\mathcal{G}_p(pt+2)$. Suppose otherwise, then
	$pk-\tilde{w}(K)+2r+s \leq pt+1$.
	Together with~\eqref{eq-K}, this implies
	\begin{equation}\label{double}
		(2p-1)k - 2\tilde{w}(K) + 4r+2s \leq (2p-1)t+1.
	\end{equation}
	In what follows, we write $\gamma := \gamma_K$. Define $\eta: J\rightarrow \mathbb{N}$ as follows:
	\begin{itemize}
		\item $\eta(x_i) := 2p-1-\gamma(x_i)$, for all $i \in [k]$;
		\item $\eta(y_i) := 4$, for all $i \in [r]$;
		\item $\eta(z_i) := 2$, for all $i \in [s]$.
	\end{itemize}
	Note that by Proposition~\ref{prop-hero}(ii) we have
	\begin{equation}\label{eq-etap}
	\eta(x) \geq p, \quad\text{for all }x \in K.
	\end{equation}
	Further define
	$$
	H(u) := \sum_{v \in J}\eta(v)\tilde{w}(u,v),\quad\text{for all }u \in V.
	$$
	Since $\sum_{i\in [k]}\gamma(x_i)=2\tilde{w}(K)$, we have as in~\eqref{eq:ricecake} that
	\begin{align*}
		\sum_{u \in V}H(u) &= \sum_{v \in J}\eta(v)\sum_{u \in V}\tilde{w}(u,v) \leq \Big(\sum_{v \in J}\eta(v)\Big)(pn-\delta(G))\\
		&< ((2p-1)k - 2\tilde{w}(K) + 4r+2s) \cdot \frac{p(2p-1)n}{(2p-1)t+1} \stackrel{(\ref{double})}{\leq} p(2p-1)n,
	\end{align*}
	implying that there exists a vertex $u_* \in V$ with $H(u_*) < p(2p-1)$.
	
	Suppose there is some $v \in K$ with $\tilde{w}(v,u_*) = p$.
	Then
	$$
	p(2p-1)>H(u_*)\ge \sum_{x' \in K\setminus\{v\}}\eta(x')\tilde{w}(x',u_*) + p\eta(v) \stackrel{(\ref{eq-etap})}{\ge} p\cdot\gamma_{K\setminus \{v\}}(u_*) + p(2p-1-\gamma(v)),
	$$
	implying that we in fact have $\gamma_{K\setminus\{v\}}(u_*) <  \gamma(v)$. Then Proposition~\ref{prop-hero}(iii) implies that $u_* \in K$.
	If $u_* \neq v$, then $\gamma(u_*) \geq \tilde{w}(v,u_*) = p$, contradicting Proposition~\ref{prop-hero}(ii).
	So $u_*=v$, and thus $\gamma_{K\setminus\{v\}}(u_*) = \gamma(v)$, and we have obtained a contradiction.
	Therefore,
	\begin{equation}\label{eq-K-linked}
		w(v,u_*) \geq 1,\quad\text{for all }v \in K.
	\end{equation}
	So $u_* \notin K$, since $w(u_*,u_*)=0$. Proposition~\ref{prop-hero}(ii) then implies that \begin{equation}\label{gammau*}
		\gamma(u_*) \geq p.
	\end{equation}
	Consequently, recalling that $\eta(x') \geq p$ for all $x' \in K$,
	\begin{equation}\label{eq-K-Hweight}
		\sum_{x' \in K}\eta(x')\tilde{w}(x',u_*) \geq p\gamma(u_*)\ge p^2.
	\end{equation}
	For every $y \in R := \lbrace y_1,\ldots,y_r\rbrace$, we have
	$$
	p(2p-1)>H(u_*)\ge 4\tilde{w}(y,u_*)+\sum_{x' \in K}\eta(x')\tilde{w}(x',u_*)\stackrel{\eqref{eq-K-Hweight}}{\ge} 4\tilde{w}(y,u_*)+p^2,
	$$
	whence $\tilde{w}(y,u_*)<\frac{p(p-1)}{4}<p$. So \begin{equation}\label{eq-R-linked}
		w(y,u_*) \geq 1,\quad\text{for all }y \in R.
	\end{equation}
	
	Suppose that $w(z,u_*) \geq 1$ for all $z \in S := \lbrace z_1,\ldots,z_s\rbrace$. Then $u_* \notin S$, by~\eqref{eq-K-linked} $u^* \notin K$ and by~\eqref{eq-R-linked}, $u_* \notin R$.
	So $u_* \notin J$ and moreover, setting $w(u_*)=1$ gives a dominating extension to include $J \cup \{u_*\}$.
	Thus we can add $u_*$ to $S$, contradicting the maximality of $2r+s$.
	So $S' := \{z \in S: \tilde{w}(z,u_*)=p\} \neq \varnothing$.
	Then  we have
	$$p(2p-1)>H(u_*)\ge \sum_{z \in S'}\eta(z)\tilde{w}(z,u_*)+\sum_{x' \in K}\eta(x')\tilde{w}(x',u_*)\stackrel{\eqref{eq-K-Hweight}}{\ge} 2|S'|p+p\gamma(u_*), $$
	implying together with~(\ref{gammau*}) that
	\begin{equation}\label{eq:S'}
		|S'|<\frac{2p-1-\gamma(u_*)}{2} \le \frac{p-1}{2}.
	\end{equation}
	If $p=3$, then $S'$ is empty, a contradiction.
	So from now on assume $p=4$.
	Then $|S'|=1$, i.e.~there is a unique $z_* \in S$ with $\tilde{w}(z_*,u_*)=4$, and 
	\begin{equation}\label{Sweight}
		w(z,u_*)\geq 1,\quad\text{for all }z \in S\setminus\lbrace z_*\rbrace.
	\end{equation}
	The first inequality in~(\ref{eq:S'}) implies that $\gamma(u_*) \leq 4$.
	Now~(\ref{gammau*}) implies that in fact $$\gamma(u_*)=4.$$
	Also, from
	$$
	28 > H(u_*) \geq 2\tilde{w}(z_*,u_*)+ 4\sum_{y \in R}\tilde{w}(y,u_*) +\sum_{x' \in K}\eta(x')\tilde{w}(x',u_*)\stackrel{\eqref{eq-K-Hweight}}{\ge} 8+4\sum_{y \in R}\tilde{w}(y,u_*)+16,
	$$
	we deduce that $\sum_{y \in R}\tilde{w}(y,u_*)=0$. In other words,
	\begin{equation}\label{Rweight}
		w(y,u_*) =4, \quad\text{for all }y \in R.
	\end{equation}
	
	Let $T := \lbrace x \in K: \tilde{w}(x,u_*) > 0\rbrace$. By definition, $ \sum_{x \in T}\tilde{w}(x,u_*)=\gamma(u_*) =4$, implying, together with~\eqref{eq-K-linked}, that $2 \leq |T| \leq 4$.
	If $|T| \geq 3$, then the multiset $\lbrace w(x,u_*) : x \in T\rbrace$ recording the weights from $u_*$ to $T$ is either $ \lbrace 3,3,3,3\rbrace$, or  $\lbrace 2,3,3\rbrace$. In particular, by~(\ref{Rweight}), $w(y,u_*) \geq 2$ for all $y \in K \cup R$ with equality at most once.
	Together with~(\ref{Sweight}), this implies that
	we can delete $z_*$ from $S$ and add $u_*$ to $R$ to obtain a set, $J\cup\{u_*\}\setminus\{z_*\}$, having a dominating extension with larger $2r+s$, a contradiction.
	
	Thus we may assume that $|T| = 2$.
	Let $T =: \lbrace a,b\rbrace$ and $\alpha := \tilde{w}(a,u_*)$ and $\beta := \tilde{w}(b,u_*)$.
	Then $\alpha+\beta=\gamma(u_*)=4$; and Proposition~\ref{prop-hero}(iii) implies that 
	$\beta = \gamma_{T\setminus\{a\}}(u_*) = \gamma_{K\setminus\{a\}}(u_*) \geq \gamma(a)$
	and similarly $\alpha \geq \gamma(b)$. We then arrive at the final contradiction:
	\begin{align*}
		28>H(u_*) &\geq \sum_{x' \in \{a,b,z_*\}}\eta(x')\tilde{w}(x',u_*) \ge \alpha(7-\gamma(a)) + \beta(7-\gamma(b)) + 2\cdot 4\\ &\ge \alpha(7-\beta) + \beta(7-\alpha) + 8\ge  8 + 7(\alpha+\beta)-\frac{1}{2}(\alpha+\beta)^2=28,
	\end{align*}
	completing the proof.
\end{proof}

\section{Concluding remarks}\label{sec-remarks}
In this paper, we construct complex Bollob\'as-Erd\H{o}s graphs with varying rational densities, providing, for over half of the cases, the structures predicted in Conjecture~\ref{conj}. However, in general, we show that Conjecture~\ref{conj} does not hold for infinitely many cases. Several interesting problems remain.
\begin{itemize}
	     \item We can bound the clique number of the complex Bollob\'as-Erd\H{o}s graphs in Theorem~\ref{GBE} only when $\ell\le p/2$, as the convexity of the regions (the dark ones in Figure~\ref{fig:GBE}) corresponding to~\ref{it-between}(ii) is essential for our argument. The obvious question is whether we can construct a variant with density larger than $\frac{1}{2}$ for which we can bound the clique number.
	     This would imply the existence of a graph as described in Conjecture~\ref{conj} and hence would show $\varrho_p(q) \geq \varrho_p^*(q)$ for all $p \leq q$. In particular, do we have $\varrho_3(6) = \frac{1}{3}$?
	     
	     \item We have shown that the conjectured Ramsey-Tur\'an density $\varrho_p^*(q)$ in Conjecture~\ref{conj} falls short for infinitely many cases. The smallest counterexample we have constructed is a balanced almost $3$-partite graph with density $1/4$ between parts, showing that 
	     $$\varrho_{16}(22)\ge\frac{1}{6}>\frac{5}{32}=\varrho_{16}^*(22).$$
	     We then later found almost $t$-partite counterexamples for infinitely many choices of even $t$ in Theorem~\ref{thm-2tri-exact}. As the above almost $3$-partite construction for $\varrho_{16}(22)$ differs substantially from the ones in Theorem~\ref{thm-2tri-exact}, we chose not to include its proof here.
	     
	     Now that we know when $q\le 2p+1$, the asymptotic extremal graphs need not be almost bipartite, as the next step towards understanding $\varrho_p(q)$, it would be interesting to give a characterisation of pairs $(p,q)$ with $q\le 2p+1$ such that Conjecture~\ref{conj} holds.
	     
	     \item For the upper bound, it would be nice to extend Theorem~\ref{thm-upper-main} to larger values of $p$. 
\end{itemize}



\end{document}